\documentclass[DIV12,parskip=half]{scrartcl}

\usepackage[utf8]{inputenc}
\usepackage[english]{babel}

% Basic Math
\usepackage{amssymb,amsmath,amsfonts}
\usepackage{amsthm}
\usepackage[centercolon]{mathtools}
\usepackage{nicefrac}

% Bitte kein Computer Modern
\usepackage[T1]{fontenc}
\usepackage[proportional]{libertine}
\usepackage[libertine,liby,vvarbb]{newtxmath}
\usepackage[scaled=0.95,varqu,varl]{inconsolata}
\useosf
\frenchspacing
\usepackage[kerning,spacing]{microtype}

% Some formatting
\numberwithin{equation}{section}
\usepackage{enumerate}
\usepackage[colorlinks,linkcolor=blue,citecolor=blue,urlcolor=blue,unicode]{hyperref}
\usepackage[nameinlink,capitalize]{cleveref}

% Theorems
\newtheorem{theorem}{Theorem}[section]
\newtheorem{proposition}[theorem]{Proposition}
\newtheorem{definition}[theorem]{Definition}
\newtheorem{lemma}[theorem]{Lemma}
\newtheorem{remark}[theorem]{Remark}
\newtheorem{example}[theorem]{Example}
\newtheorem{corollary}[theorem]{Corollary}
\newtheorem{assumption}[theorem]{Assumption}

% Custom Macros
% Mathbb
\newcommand{\R}{\mathbb{R}}
\newcommand{\N}{\mathbb{N}}

% norms etc
\newcommand{\norm}[1]{\ensuremath{\lVert#1\rVert}}
\newcommand{\abs}[1]{\ensuremath{\lvert#1\rvert}}
\newcommand{\sca}[3][]{\langle #2\,,\,#3\rangle}
\newcommand{\Span}[1]{\langle {#1} \rangle}
\newcommand{\of}[2][1]{\left(#2\right)}
% Special spaces
\newcommand{\Lorl}[1][\Phi]{L^{#1}}
\newcommand{\LlogL}{\ensuremath{L\log L}}
\newcommand{\Lexp}{\ensuremath{L_{\mathrm{exp}}}}
\newcommand{\M}{\ensuremath{\mathfrak{M}}}
\newcommand{\Mp}{\ensuremath{\mathfrak{M}_+}}
\renewcommand{\P}{\ensuremath{\mathcal{P}}}

\newcommand{\CC}{\ensuremath{\mathcal{C}}}
\newcommand{\CCb}{\ensuremath{\CC_{\mathrm{b}}}}

\newcommand{\test}{\CC_{\mathrm{c}}^{\infty}}
% Integration
\newcommand{\bd}{\ensuremath{\mathop{}\!\mathrm{d}}}
\newcommand{\leb}{\mathcal{L}}
\newcommand{\dleb}{\bd\leb}
\newcommand{\dlambda}{\bd\lambda}
\newcommand{\hlambda}{\hat\lambda}
\newcommand{\dhlambda}{\bd\hlambda}

\newcommand{\ds}{\bd s}

\newcommand{\dw}{\bd w}
\newcommand{\dpi}{\bd \pi}

\newcommand{\dmu}{\bd\mu}
\newcommand{\dnu}{\bd\nu}
\newcommand{\intO}[1][]{\int_{\Omega_{#1}}}
% For the Gamma-Conv section

% Convergence notions
\newcommand{\wsto}{\xrightharpoonup{*}}
\newcommand{\phito}{\xrightarrow{\Phi}}
\DeclareMathOperator*\Gammalim{\begingroup\mathgroup-1 \Gamma\endgroup-lim}
% Misc

\DeclareMathOperator\spt{spt}
\newcommand{\pos}[1]{{#1}_+}
\newcommand{\negprt}[1]{{#1}_-}
\newcommand{\blank}{\, \boldsymbol{.}\, }
\newcommand{\set}[2]{\ensuremath{\left\{#1\,\middle|\,#2\right\}}}
\usepackage{dsfont}
\newcommand{\1}{\mathds{1}} % might need editing depending on font
\newcommand{\proj}[1]{P_{#1}}
\newcommand{\ppfw}[1][i]{(\proj{#1})_\#}
\newcommand{\calV}{\mathcal{V}}
\newcommand{\ext}[1]{{#1}_\infty}

\crefname{assumption}{Assumption}{Assumptions}

\begin{document}
	\title{Orlicz space regularization of continuous optimal transport problems}
	\author{
		Dirk Lorenz% 0000-0002-7419-769X
		\thanks{%
			Institute of Analysis and Algebra,
			TU Braunschweig,
			38092 Braunschweig, Germany,
			(\texttt{d.lorenz@tu-braunschweig.de, h.mahler@tu-braunschweig.de})
		}
		\and
		Hinrich Mahler% 0000-0001-9108-549X
		\footnotemark[1]
	}

	\maketitle
	
	\begin{abstract}
		In this work we analyze regularized optimal transport problems in the so-called Kantorovich form,
		i.e. given two Radon measures on two compact sets, the aim is to find a transport plan, which is another Radon measure on the product of the sets, that has these two measures as marginals and minimizes the sum of a certain linear cost function and a regularization term.
		We focus on regularization terms where a Young's function applied to the (density of the) transport plan is integrated against a product measure. This forces the transport plan to belong to a certain Orlicz space.
		The predual problem is derived and proofs for strong duality and existence of primal solutions of the regularized problem are presented. Existence of (pre-)dual solutions is shown for the special case of $L^p$-regularization for $p\geq 2$.
		Moreover, two results regarding $\Gamma$-convergence are stated: The first is concerned with marginals that do not lie in the appropriate Orlicz space and guarantees $\Gamma$-convergence to the original Kantorovich problem, when smoothing the marginals.
		The second result gives convergence of a regularized and discretized problem to the unregularized, continuous problem.	
	\end{abstract}

	\section{Introduction}
	
		In this paper we consider the optimal transport problem in the Kantorovich form in the following setting: For compact sets $\Omega_1,\,\Omega_2\subset \R^n$, probability measures $\mu_1,\mu_2$ on $\Omega_1,\Omega_2$, respectively, and a real-valued continuous cost function $c:\Omega_1\times\Omega_2\to \R$ we want to solve
		\begin{equation}\tag{OT}\label{eq:ot}
			\inf_{\pi}\int_{\Omega_1\times\Omega_2}c\dpi
		\end{equation}
		where the infimum is taken over all probability measures on $\Omega_1\times \Omega_2$ which have $\mu_1$ and $\mu_2$ as their first and second marginals, respectively.
		This problem has been well studied, and an overview is given in the recent books \cite{santambrogio:2015,villani:2008,peyre:2019}.
		For example, it is known that the problem has a solution $\bar\pi$ and that the support of $\bar\pi$ is contained in the so-called $c$-superdifferential of a $c$-concave function on $\Omega_1$, see \cite[Theorem~1.13]{ambrosio:2013}. In the case where $c(x_1,x_2) = |x_1-x_2|^2$ is the squared Euclidean distance, this implies that the support of an optimal plan $\pi$ is singular with respect to the Lebesgue measure.
		This motivates the use of regularization of the continuous problem to obtain approximate solutions that are absolutely continuous w.r.t. given measures. That in turn allows to apply classical discretization techniques to solve the regularized problem approximately.
		
		A regularization method that received much attention recently is regularization with the negative entropy of $\pi$, i.e. adding a term $\int_{\Omega_{1}\times\Omega_{2}}\Phi(\pi)\dlambda$ with $\Phi(t) = t\log(t)$ and some measure $\lambda$ on $\Omega_{1}\times \Omega_{2}$ \cite{carlier:2017,cuturi:2013,cuturi:2015,clason:2021,benamou:2015}. Since $\pi$ is a measure, one has to interpret $\Phi(\pi)$ appropriately: One should think of $\pi$ as the Radon-Nikodym derivative of $\pi$ with respect to the regularization measure $\lambda$, and we will make this distinction explicit in the following.
		In~\cite{clason:2021} entropic regularization with respect to the Lebesgue measure is considered and it is shown that the analysis of entropically regularized optimal transport problems naturally takes place in the function space $\LlogL$ (also called Zygmund space~\cite{bennett:1988}) and that optimal plans for entropic regularization are always in $\LlogL(\Omega_{1}\times \Omega_{2})$ and exist if and only if the marginals are in the spaces $\LlogL(\Omega_{i})$. These spaces are an example of so-called Orlicz spaces~\cite{rao:1988}.
		This motivates the analysis of regularization in arbitrary Orlicz spaces in this paper. Another motivation to study a more general regularization comes from the fact that regularization with the $L^{2}$-norm has been shown to be beneficial in some applications, see~\cite{roberts:2017,blondel:2017,dessein:2018,lorenz:2019}.
        Using the product of the marginals $\lambda = \mu_{1}\times \mu_{2}$ for regularization has been considered in the case of entropic regularization~\cite{genevay:2019entropy,peyre:2019,vialard:2019}. In this case one can show existence of the dual problem with different techniques. These observations motivate us to consider regularization with Young's function with respect to general measures. Notable regularizations that our approach covers are $L^p$ regularization with $p>1$ arbitrary and the Tsallis entropy \cite{muzellec:2017}.
        
        The notion of Orlicz spaces in the context of convex integral functionals has previously been used in \cite{Leonard:2008}, where the author considers a more general setting than the one presented here. More precisely, the spaces used in \cite{Leonard:2008} are a generalization of the Orlicz spaces used here, which are also known as Musielak-Orlicz spaces \cite{musielak:1983}. Existence of both primal and dual optimizers are covered.
        By choosing $\gamma^*(z,t) = \varepsilon\cdot\Phi(t) + c(z)t + A(z)$ with $A(z) :=\min_t \varepsilon\cdot\Phi(t) + c(z)t$ and regularization parameter $\varepsilon$, a problem similar to the one considered here is recovered. The difference lies in the fact that the cost function $c$ is part of the definition of the relevant Musielak-Orlicz spaces in this case and hence, the analysis takes place in different spaces.
        As the aim of \cite{Leonard:2008} is to weaken the necessary assumptions as much as possible, the overall setting is more abstract and the proofs rely heavily on the author's work \cite{Leonard:2010}. Here we aim for a self-contained, more elementary treatment of the problem.

        During the review process of the present work, we became aware of the preprint \cite{dimarino:2020}, which also considers regularization of the Kantorovich problem with Young's functions. While the underlying domains $\Omega_i$ are chosen to be general complete separable metric spaces, only regularization with respect to the marginal measures is considered and this allows the authors to derive existence of dual solutions independent of the form of the regularization.

        Moreover, in \cite{paty:2020} regularization with general convex functionals $F:\M(\Omega)\to\R\cup\{\infty\}$ is considered. A duality result similar to our \cref{thm:str:dual} is derived and existence of primal solutions is covered. However, \cite[Theorem 2]{paty:2020} implies the existence of \emph{continuous} dual optimizers. As \cref{example:no-cont-duals} below demonstrates, this can not be the case in general.
		
	\subsection{Notation and problem statement}
	
		Let us first fix some notation before we formulate our problem.
		The spaces of Radon and probability measures on $\Omega\subset\R^n$ will be denoted by $\M(\Omega)$ and $\P(\Omega)$, respectively. The cone of non-negative Radon measures will be denoted by $\Mp(\Omega)$.
		With $\CC(\Omega)$ and $\CCb(\Omega)$ we denote the spaces of continuous functions and bounded, continuous functions, respectively.
		The Lebesgue measure will be denoted by $\leb$ and integrals w.r.t. the Lebesgue measure are simply denoted by $\bd x$ with the appropriate integration variable $x$.
		
        In the following we will consider compact domains $\Omega_{1}$, $\Omega_{2}$ equipped with finite measures  $\lambda_1\in\Mp(\Omega_1)$ and $\lambda_2\in\Mp(\Omega_2)$, respectively. The measures $\lambda_1$ and $\lambda_2$ will be assumed to have full support, i.e. $\spt \lambda_i = \Omega_i$, for $i=1,2$. We will denote $\Omega := \Omega_{1}\times\Omega_{2}$ and $\lambda := \lambda_1\otimes\lambda_2$.
		For the space of $p$-integrable functions on $\Omega$ with respect to the measure $\nu$, the symbol $L^p(\Omega, \bd\nu)$ will be used.
        When a measure $\nu$ is absolutely continuous with respect to another measure $\mu$, written as $\nu\ll\mu$, the Radon-Nikodym derivative of $\nu$ w.r.t. to $\mu$, i.e. the density of $\nu$ w.r.t $\mu$, will be denoted by $\tfrac\dnu\dmu$.

		The characteristic function of a set $A$ will be denoted by $\1_A$. For two functions $f:\Omega_1\to\R$ and $g:\Omega_2\to\R$, denote by $f\oplus g:\Omega_1\times\Omega_2\to\R$, $(x_1, x_2)\mapsto f(x_1) + g(x_2)$ the outer sum of $f$ and $g$. This notation generalizes to measures $\nu_1$, $\nu_2$ on $\Omega_1$, $\Omega_2$, respectively, by $\nu_1\oplus\nu_2 := \nu_1\otimes\leb + \leb\otimes\nu_2$.
		For $\nu\in\M(\Omega_1)$ and $f: \Omega_1\to\Omega_2$, the pushforward of $\nu$ by $f$ will be denoted as $f_\#\nu$, i.e. the measure on $\Omega_2$ defined by $f_\#\nu(A) := \nu(f^{-1}(A))$. Most importantly, the pushforward of the coordinate projections $\proj{i}: \Omega_1\times\Omega_2\to\Omega_i$, $\proj{i}(x_1,x_2) = x_i$ will be used. Note, that $\ppfw \pi$ is the $i$-th marginal of $\pi\in\M(\Omega_1\times\Omega_2)$.
                For real-valued functions $f$ we denote by $f_{+} := \max(f,0)$ the positive part and by $f_{-}:= -\min(f,0)$ the negative part.
		Finally, for a function $g:[0,\infty)\to \R\cup\{\infty\}$ denote by $\ext g$ its extension to the real line by infinity, i.e.
		\[
			\ext g(x) := \begin{cases}
				g(x), &x\geq 0,\\
				\infty, &\text{else.}
			\end{cases}
		\]
		
		The Orlicz space regularized Kantorovich problem of optimal transport considered in this work now reads as
		\begin{equation}\tag{P}\label{eq:reg_kantorovich}
			\inf_{\substack{\pi\in\P(\Omega),\,\pi\ll\lambda\\\ppfw[i]{\pi} = {\mu_i},\,i=1,2}} \intO c \dpi + \gamma \intO \ext\Phi(\tfrac{\dpi}{\dlambda}) \dlambda\,,
		\end{equation}%
		where $\Phi$ is a so-called \emph{Young's function}.
		Note that the regularization of $\pi$ is employed w.r.t. some product measure $\lambda$. Important cases are $\lambda = \leb$ and  $\lambda = \mu_1\otimes\mu_2$.
		Note also that $\pi$ is required to be absolutely continuous w.r.t. $\lambda$. This is due to the fact that even for Young's functions $\Phi$ satisfying modest conditions like $\frac{\Phi(t)}{t} \not\to\infty$, as $t\to\infty$, by e.g. \cite[Theorem 5.19]{fonseca:2007} the optimal $\pi$ may have a singular part w.r.t. $\lambda$. That however, would make the process of regularizing futile.
		Therefore we will require $\lim_{t\to\infty}\nicefrac{\Phi(t)}{t} =\infty$ throughout the paper.
                
                Now consider the marginal constraints. As we will see later in \cref{thm:proj_contraction,thm:primal_existence}, it is necessary that the marginals $\mu_{i}$ are absolutely continuous with respect to $\lambda_{i}$ and that $\Phi(\tfrac{\dmu_{i}}{\dlambda_{i}})$ is integrable with respect to $\lambda_{i}$. To formulate the marginal constraints in terms of the densities, we recall that  $\ppfw[1]{\pi} = {\mu_1}$ if for all $\lambda_{1}$-measurable sets $A$ it holds that $\pi(A\times\Omega_{2}) = \mu_{1}(A)$. In terms of densities and integrals, this reads as
                \[
                	\int_{\Omega_{2}}\int_{A}\tfrac{\dpi}{\dlambda}\dlambda_{1}\dlambda_{2} = \int_{A}d\mu_{1} = \int_{A}\tfrac{\dmu_{1}}{\dlambda_{1}}\dlambda_{1}.
                \]
                Using Fubini's theorem we get
                \[
                	\int_{A}\left(\int_{\Omega_2} \tfrac{\dpi}{\dlambda}\dlambda_{2} -\tfrac{\dmu_{1}}{\dlambda_{1}}\right)\dlambda_{1} = 0\,,
                \]
                and hence, the marginal constraints read as
                \[
                	\int_{\Omega_{2}} \tfrac{\dpi}{\dlambda}\dlambda_{2} =\tfrac{\dmu_{1}}{\dlambda_{1}}\quad\text{$\lambda_1$-a.e.}\quad\text{and}\quad\int_{\Omega_{1}} \tfrac{\dpi}{\dlambda}\dlambda_{1} =\tfrac{\dmu_{2}}{\dlambda_{2}}\quad\text{$\lambda_2$-a.e.}
                \]
                
		Note that for the integral $\intO c\dpi = \intO c\tfrac{\dpi}{\dlambda}\dlambda$ to exist, the cost function $c$ does not need to be continuous and the problem may be formulated for more general cost functions. However, some of the results in this work require $c$ to be continuous and for simplicity this shall be assumed throughout the paper.

        Let us summarize our assumptions:

        \begin{assumption}\label{assmpt:general}
            For $i=1,2$ let $\Omega_{i}\subset\R^n$ be compact domains equipped with finite measures  $\lambda_i\in\Mp(\Omega_i)$ with $\spt\lambda_i = \Omega_i$. Let $c:\Omega\to\R$ be a continuous cost function. Let $\Phi$ be such that $\lim_{t\to\infty}\nicefrac{\Phi(t)}{t} =\infty$. Finally, for $i=1,2$ let $\mu_i\in\P(\Omega_i)$ such that $\mu_i\ll\lambda_i$ and $\Phi(\tfrac{\dmu_{i}}{\dlambda_{i}})$ is integrable with respect to $\lambda_{i}$.
        \end{assumption}

	\subsection{Contribution and Organization}
	
		The notions of Young's functions and Orlicz spaces are introduced in \cref{sec:yf_os} alongside some auxiliary results that will be used in the later sections. \Cref{sec:existence} deals with the question of existence of solutions in the framework of Fenchel duality.
		The first contribution (\cref{thm:primal_existence}) guarantees existence of solutions of problem \eqref{eq:reg_kantorovich}, which generalizes the corresponding results of \cite{clason:2021,lorenz:2019}. Afterwards, the predual problem is analyzed for the special case $\Phi(t) = \nicefrac{t^p}p$ for $p>1$. As second contribution \cref{thm:dual:lq} gives existence of dual optimizers in $L^q$, where $\nicefrac 1p + \nicefrac 1q = 1$ and $p\geq 2$. This generalizes the corresponding result of \cite{lorenz:2019}.
		In \cref{sec:gamma_conv} $\Gamma$-convergence of different related problems is considered. First, a continuous, regularized problem with arbitrary marginals is considered. \Cref{thm:conv_convergence} extends \cite[Theorem 5.1]{clason:2021} and guarantees $\Gamma$-convergence to the unregularized problem \eqref{eq:ot} when smoothing the marginals. Note that the case of $\Gamma$-convergence for fixed marginals in $\LlogL(\Omega)$ has been treated in \cite[Theorem~2.7]{carlier:2017} for $\Omega = \R^{n_1}\times \R^{n_2}$. While \cref{thm:conv_convergence} is stated only for compact $\Omega$, it allows for marginals not in $\LlogL(\Omega)$ and a coupled reduction of the regularization and the smoothing parameter.
		The final contribution (\cref{thm:disc:conv}) is the proof of $\Gamma$-convergence of a discretized and regularized optimal transport problem to the unregularized continuous problem \eqref{eq:ot}. The result covers both entropic and quadratic regularization as special cases.
        Some of the results in this paper are a direct generalization of results of previous papers and their proofs also follow the general proof strategy.
	
	\section{Young's functions and Orlicz spaces}\label{sec:yf_os}
		\label{sec:orlicz_spaces}\label{sec:young_functions}
		
		In this section, some notions about Young's functions and Orlicz spaces are introduced. For a more detailed introduction, see~\cite{bennett:1988,rao:1988}.
	
		\begin{definition}[Young's function
			{\cite[Definitions IV.8.1, IV.8.11]{bennett:1988}}%
		]\label{def:yf}
			\begin{enumerate}[i)]
				\item Let $\varphi : [0, \infty) \to [0, \infty]$ be increasing and lower semi-continuous,	with $\varphi(0) = 0$. Suppose that $\varphi$ is neither identically zero nor identically infinite on $(0,\infty)$. Then the function $\Phi$ defined by $\Phi(t) := \int_0^t \varphi(s) \bd s$ is said to be a \emph{Young's function}.
				
				\item Let $\psi(s) := \inf\set{t}{\varphi(t) \geq s}$. Then, the function $\Psi$ defined by $\Psi(t) := \int_0^t \psi(s) \bd s$ is said to be the \emph{complementary Young's function} of $\Phi$.
			\end{enumerate}
		\end{definition}
	
		By definition, Young's functions are convex and for a Young's function $\Phi$ it holds that the complementary Young's function $\Psi$ is also a Young's function and actually equal to the convex conjugate $\Phi^*$.
		
		The negative entropy regularization uses the function $\Phi(t) = t\log(t)$ which is not a Young's function, but the function $t\mapsto (t\log(t))_{+}$ is. Hence, we introduce a slight generalization of the notion of Young's function to be able to treat this case as well.
	
		\begin{definition}[Quasi-Young's functions]\label{def:qyfs}
            We say that $\Phi$ is a \emph{quasi-Young's function} if it is convex, lower semi-continuous and $\pos\Phi$ is a Young's function.
		\end{definition}
                
        Note that convexity of $\Phi$ shows that $\Phi$ is bounded from below.
        Moreover, any Young's function is also a quasi-Young's function.
                
		\begin{example}
			The function $\Phi(t) = t\log(t)$ is a quasi-Young's function because $\pos\Phi(t) = \pos{(t\log(t))}$ is a Young's function.
		\end{example}
	
		\begin{definition}[Luxemburg and Orlicz spaces {\cite[Definition IV.8.10]{bennett:1988}}]
			Let $\Phi$ be a Young's function, $\Omega\subset\R^n$ and $\nu\in\Mp(\Omega)$. Define the \emph{Luxemburg norm} of a measurable function $f: \Omega \to \R$ w.r.t. $\nu$ as
			\[
				\norm{f}_{L^\Phi(\Omega,\bd\nu)} := \inf \set{\gamma\geq0 }{ \intO \Phi\of{\frac{\abs{f}}{\gamma}}\dnu \leq 1}\,.
			\]
			Then the space
			\[
				\Lorl(\Omega,\,\dnu) := \Big\{f:\Omega\to\R\,\mathrm{ measurable}\,\Big|\,\norm{f}_{L^\Phi(\Omega,\,\dnu)} < \infty\Big\}
			\]
			of measurable functions on $\Omega$ with finite Luxemburg norm is called the \emph{Orlicz space of $\Phi$ w.r.t. $\nu$}.
		\end{definition}
		
		\begin{remark}[{\cite[Remark~1]{caruso:2003}}]
			\label{note:luxnorm_1}
			The bound $1$ in the definition of the Luxemburg norm can be replaced by any $a~\in~(0,\infty)$. That is, all norms defined by
			\[
				{\norm{f}}_{\Lorl(\Omega,\,\dnu), a} := \inf \set{\gamma\geq 0}{\intO \Phi\of{\frac{\abs{f}}{\gamma}}\dnu \leq a}
			\]
			are equivalent. This can be seen by combining the inequalities ${\norm{\blank}}_{\Lorl(\Omega,\,\dnu), b}~\leq~{\norm{\blank}}_{\Lorl(\Omega,\,\dnu), a}$ and $a {\norm{\blank}}_{\Lorl(\Omega,\,\dnu), a}~\leq~b{\norm{\blank}}_{\Lorl(\Omega,\,\dnu), b}$ for $0<a<b$.
		\end{remark}

		The definition of Orlicz spaces does not immediately allow for the concept of quasi-Young's functions to be incorporated. However, the following results establish the desired connection.
		
		\begin{lemma}\label{thm:lorl:eq_yf}
			Let $\Phi$ be a Young's function and $\Omega\subset \R^n$, $\nu\in\Mp(\Omega)$ with $\nu(\Omega)<\infty$ and let $t_{0}\geq 0$. Then $\tilde{\Phi}$ defined by
			$\tilde{\Phi}(t) := \pos{(\Phi(t) - \Phi(t_{0}))}$
			is a Young's function and $\Lorl(\Omega, \dnu)=\Lorl[\tilde{\Phi}](\Omega, \dnu)$.
		\end{lemma}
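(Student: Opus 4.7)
The goal decomposes naturally into two tasks: verifying that $\tilde\Phi$ fits the template of \cref{def:yf}, and then showing that the Luxemburg norms with respect to $\Phi$ and $\tilde\Phi$ are equivalent. My plan is to handle them in that order, treating the trivial case $t_{0}=0$ (where $\tilde\Phi = \Phi$ because $\Phi(0)=0$) separately and working with $t_{0}>0$ from then on.

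\textbf{Step 1: $\tilde\Phi$ is a Young's function.} Writing $\Phi(t)=\int_0^t \varphi(s)\,\mathrm{d}s$ as in \cref{def:yf}, I note that $\Phi$ is increasing (since $\varphi\geq 0$), so $\Phi(t)-\Phi(t_{0})\leq 0$ whenever $t\leq t_{0}$ and $\tilde\Phi$ vanishes on $[0,t_{0}]$. For $t\geq t_{0}$ I rewrite $\tilde\Phi(t)=\Phi(t)-\Phi(t_{0})=\int_{t_{0}}^{t}\varphi(s)\,\mathrm{d}s$. The natural candidate for the representing function is
\[
  \tilde\varphi(s) := \varphi(s)\,\1_{(t_{0},\infty)}(s),
\]
which clearly satisfies $\tilde\varphi(0)=0$ and $\int_{0}^{t}\tilde\varphi=\tilde\Phi(t)$ on both ranges. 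The only slightly delicate point is lower semi-continuity at $s=t_{0}$: killing the point $t_{0}$ itself (rather than including it in the support) ensures $\tilde\varphi(t_{0})=0\leq \liminf_{s\to t_{0}}\tilde\varphi(s)$, which together with monotonicity gives lsc everywhere. Monotonicity is immediate since $\varphi$ is increasing and $\tilde\varphi=0$ on $[0,t_{0}]$. Finally, neither the identically-zero nor identically-infinite alternative on $(0,\infty)$ can occur for $\tilde\varphi$: since $\varphi$ is increasing and not identically zero on $(0,\infty)$, there is $s_1$ with $\varphi(s_1)>0$, and then for any $s>\max(s_1,t_{0})$ we have $\tilde\varphi(s)\geq \varphi(s_1)>0$; on the other hand $\tilde\varphi\equiv 0$ on $(0,t_{0})$ precludes being identically infinite.

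\textbf{Step 2: equality of the Orlicz spaces.} Here the plan is to sandwich $\Phi$ and $\tilde\Phi$ by elementary pointwise inequalities. Since $\Phi(t_{0})\geq 0$, one direction is trivial: $\tilde\Phi(t)\leq \Phi(t)$ for all $t\geq 0$, so $\int \tilde\Phi(|f|/\gamma)\,\mathrm{d}\nu \leq \int \Phi(|f|/\gamma)\,\mathrm{d}\nu$, yielding $\norm{f}_{L^{\tilde\Phi}(\Omega,\mathrm{d}\nu)}\leq \norm{f}_{L^{\Phi}(\Omega,\mathrm{d}\nu)}$ and hence the inclusion $\Lorl(\Omega,\mathrm{d}\nu)\subseteq \Lorl[\tilde\Phi](\Omega,\mathrm{d}\nu)$. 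For the reverse inclusion I use the complementary bound
\[
  \Phi(t) \leq \tilde\Phi(t) + \Phi(t_{0}) \qquad \text{for all } t\geq 0,
\]
which holds trivially on $[0,t_{0}]$ (where the right-hand side is $\Phi(t_{0})\geq\Phi(t)$) and with equality on $[t_{0},\infty)$. Integrating against $\nu$ and exploiting $\nu(\Omega)<\infty$, any $f$ with $\int \tilde\Phi(|f|/\gamma)\,\mathrm{d}\nu\leq 1$ satisfies $\int \Phi(|f|/\gamma)\,\mathrm{d}\nu\leq 1+\Phi(t_{0})\nu(\Omega)=:a$. By \cref{note:luxnorm_1} the norm defined with the threshold $a$ in place of $1$ is equivalent to the original Luxemburg norm, giving $\norm{f}_{L^{\Phi}(\Omega,\mathrm{d}\nu)}\leq C\,\norm{f}_{L^{\tilde\Phi}(\Omega,\mathrm{d}\nu)}$ and the inclusion $\Lorl[\tilde\Phi](\Omega,\mathrm{d}\nu)\subseteq \Lorl(\Omega,\mathrm{d}\nu)$.

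The main (really only) subtle point is the lsc check in Step 1, where one has to be careful about whether to include or exclude the point $t_{0}$ in the support of $\tilde\varphi$; the right choice is driven by the fact that an increasing lsc function on $[0,\infty)$ is necessarily left-continuous, so a jump at $t_{0}$ must happen from the right. Everything else is a direct, near-mechanical consequence of the pointwise two-sided comparison between $\Phi$ and $\tilde\Phi$ combined with finiteness of $\nu$.
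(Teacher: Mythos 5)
Your proof is correct. Step 1 (the verification that $\tilde\Phi$ is a Young's function via $\tilde\varphi = \varphi\cdot\1_{(t_0,\infty)}$) matches the paper's argument exactly, including the care with the open endpoint at $t_0$ to preserve lower semi-continuity. Step 2, however, takes a genuinely cleaner route than the paper's. The paper, starting from $\int_\Omega \tilde\Phi(|f|/\gamma)\,\mathrm{d}\nu \leq 1$, splits $\int_\Omega \Phi(r|f|/\gamma)\,\mathrm{d}\nu$ into three regions according to where $|f|/\gamma$ falls relative to $t_0$ and $t_0/r$, and bounds each piece separately via the convexity inequality $\Phi(sx)\leq s\Phi(x)$, then chooses $r$ small enough. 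You instead isolate the single pointwise inequality $\Phi(t)\leq\tilde\Phi(t)+\Phi(t_0)$ (the exact complement of the trivial $\tilde\Phi\leq\Phi$), integrate it using $\nu(\Omega)<\infty$ to get $\int_\Omega\Phi(|f|/\gamma)\,\mathrm{d}\nu\leq 1+\Phi(t_0)\nu(\Omega)$, and then appeal directly to \cref{note:luxnorm_1} on the equivalence of Luxemburg-type thresholds. This is shorter and conceptually more transparent; the tradeoff is that you outsource the convexity work to the remark, whereas the paper's proof is self-contained (and, in fact, the remark itself is proved by essentially the same scaling argument). One small caveat, present in both versions: the whole construction implicitly assumes $\Phi(t_0)<\infty$, otherwise $\tilde\Phi$ degenerates, and your constant $a=1+\Phi(t_0)\nu(\Omega)$ would not be finite; this is fine here since the lemma's downstream use always has $\Phi$ finite near the origin.
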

	    A proof of \cref{thm:lorl:eq_yf} can be found in \cref{appendix:thm:lorl:eq_yf}.
	
		\begin{corollary}\label{thm:lorl:qyf_same_space}
			Let $\Phi$ be a Young's function, $\Theta$ a quasi-Young's function with $\pos\Theta=\Phi$, $\nu$ a measure and $\Omega\subset \R^n$ with $\nu(\Omega)<\infty$. Then, $f\in\Lorl(\Omega,\,\dnu)$ if and only if
			\[
				\intO \Theta\left(\frac{\abs{f}}{\gamma}\right) \dnu \leq 1
			\]
			for some $\gamma<\infty$.
		\end{corollary}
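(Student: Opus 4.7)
The plan is to exploit two structural facts about $\Theta$ that follow directly from its definition as a quasi-Young's function with $\pos\Theta = \Phi$: first, $\Theta \leq \Phi$ pointwise on $[0,\infty)$, since $\Phi = \max(\Theta,0)$; second, $\Theta$ is bounded from below, as already noted after Definition~\ref{def:qyfs}, so there is some $M \geq 0$ with $\negprt\Theta \leq M$. Together with the finiteness of $\nu$, these two facts let me freely swap $\Theta$ and $\Phi$ in integrals of the Luxemburg type at the cost of only an additive constant $M\nu(\Omega)$.

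For the ``$\Rightarrow$'' direction, I would pick $\gamma$ with $\intO \Phi(|f|/\gamma)\,\dnu \leq 1$. The bound $\negprt\Theta \leq M$ together with $\nu(\Omega)<\infty$ ensures that $\Theta(|f|/\gamma)$ is $\nu$-integrable (its negative part is bounded and its positive part equals $\Phi(|f|/\gamma)$), and the pointwise inequality $\Theta \leq \Phi$ then yields $\intO \Theta(|f|/\gamma)\,\dnu \leq 1$, as required.

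For the ``$\Leftarrow$'' direction, I would write $\Phi = \Theta + \negprt\Theta$ and estimate, for any admissible $\gamma$,
\[
	\intO \Phi(|f|/\gamma)\,\dnu \;=\; \intO \Theta(|f|/\gamma)\,\dnu + \intO \negprt\Theta(|f|/\gamma)\,\dnu \;\leq\; 1 + M\nu(\Omega) \;=:\; a.
\]
Since $a<\infty$, \cref{note:luxnorm_1} shows that the modified Luxemburg norm ${\norm{f}}_{L^\Phi(\Omega,\dnu),a} \leq \gamma < \infty$, and this is equivalent to the original Luxemburg norm being finite; hence $f\in\Lorl(\Omega,\dnu)$. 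Alternatively, I could avoid invoking the remark by rescaling directly: convexity of $\Phi$ with $\Phi(0)=0$ gives $\Phi(t/k) \leq \Phi(t)/k$ for $k \geq 1$, so choosing $k=\max(a,1)$ yields $\intO \Phi(|f|/(k\gamma))\,\dnu \leq 1$.

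There is no real obstacle here: the statement is essentially a definitional bookkeeping lemma, and the only subtle ingredient is the lower boundedness of $\Theta$ (already provided by the text) combined with $\nu(\Omega)<\infty$. \Cref{thm:lorl:eq_yf} is not needed for this particular corollary; its role is to justify that replacing $\Theta$ by a genuine Young's function does not alter the underlying Orlicz space.
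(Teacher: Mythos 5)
Your proof is correct, and it is more direct than the paper's. The paper routes through the auxiliary function $\tilde{\Phi}(t) := \pos{(\Phi(t) - \Phi(t_{0}))}$ with $t_0 = \inf\{t\,|\,\Theta(\tau)\geq 0\,\forall\tau\geq t\}$, invokes \cref{thm:lorl:eq_yf} to identify $\Lorl$ with $\Lorl[\tilde{\Phi}]$, and then compares $\tilde{\Phi}$ to $\Theta$ (which requires the small extra observation that $\Phi(t_0)=0$, so that $\tilde{\Phi}\geq\Theta$); the reverse implication there likewise passes through $\tilde{\Phi}$. You instead work with $\Phi = \pos\Theta$ directly, using only the two elementary pointwise facts $\Theta \leq \Phi$ and $\Phi = \Theta + \negprt\Theta$ with $\negprt\Theta \leq M$, combined with $\nu(\Omega)<\infty$. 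This eliminates the dependence on \cref{thm:lorl:eq_yf} entirely and makes the ``$\Leftarrow$'' direction a one-line estimate followed by \cref{note:luxnorm_1} (or your self-contained rescaling $\Phi(t/k)\leq\Phi(t)/k$, which avoids even that remark). The rescaling alternative is worth keeping: it makes the corollary genuinely independent of \cref{note:luxnorm_1}, whereas the paper's version quietly relies on it. Both arguments buy the same statement; yours reveals that the corollary is really just the observation that $\Phi$ and $\Theta$ differ by a bounded quantity on $[0,\infty)$, which is additively absorbed on a finite measure space.
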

		\begin{proof}
			Let $t_0 =\inf\{t\,|\,\Theta(\tau)\geq 0\,\forall\tau\geq t\}$ and $\tilde{\Phi}(t) := \pos{(\Phi(t) - \Phi(t_{0}))}$. Let $\gamma$ be such that
			\[
				\intO \tilde{\Phi}\left(\frac{\abs{f}}{\gamma}\right) \dnu \leq 1.
			\]
			Then
			\[
				1 \geq \intO \tilde{\Phi}\left(\frac{\abs{f}}{\gamma}\right) \dnu
			\geq \intO \Theta\left(\frac{\abs{f}}{\gamma}\right) \dnu
			\]
			together with \cref{thm:lorl:eq_yf}, shows one implication.
			
			For the other implication, by \cref{note:luxnorm_1} it suffices to show that $\intO \tilde{\Phi}\left(\nicefrac{\abs{f}}{\gamma}\right) \dnu < \infty$ whenever $\intO \Theta\left(\nicefrac{\abs{f}}{\gamma}\right) \dnu <  \infty$. However, this holds trivially, since $\Theta$ is bounded from below and $\Omega$ has finite measure w.r.t. $\nu$.
		\end{proof}
	
		\Cref{thm:lorl:eq_yf,thm:lorl:qyf_same_space} state that the definitions of $\norm{\blank}_{\Lorl(\Omega,\dnu)}$ and $\Lorl(\Omega,\dnu)$ are essentially independent of whether $\Phi$ is a Young's function or just a quasi-Young's function. To simplify notation, $\norm{\blank}_{\Lorl[\Theta](\Omega,\dnu)}$ and $\Lorl[\Theta](\Omega,\dnu)$ will therefore be used for quasi-Young's functions $\Theta$ as well. Note that for a quasi-Young's function $\Theta$ with $\pos\Theta=\Phi$, $\Lorl[\Theta](\Omega,\dnu) = \Lorl(\Omega,\dnu)$, while in general $\norm{\blank}_{\Lorl[\Theta](\Omega,\dnu)}$ and $\norm{\blank}_{\Lorl(\Omega,\dnu)}$ are equivalent but not equal.
		Moreover for complementary Young's function{s} $\Phi$ and $\Psi=\Phi^*$ which are  proper, locally integrable and satisfy a certain growth condition, called the $\Delta_2$-property near infinity, it holds that $(\Lorl{}(\Omega,\dnu))^*$ is canonically isometrically isomorphic to $(\Lorl[\Psi](\Omega,\dnu),\norm{\blank}_{\Lorl[\Psi](\Omega,\dnu)})$ (see, e.g.~\cite{diening:2011}).
		
		\begin{example}[\LlogL{} and \Lexp{}]
			Let $\Phi(t) = t \log t$ and $\tilde{\Phi}(t) = (t \log (t))_+$. The space of measurable functions $f$ with $\intO \tilde{\Phi} (\abs{f})\dnu<\infty$ is called $\LlogL(\Omega,\,\dnu)$. By the above corollary, the space of measurable functions $g$ with $\intO {\Phi}(\abs{g})\dnu<\infty$ is equal to  $\LlogL(\Omega,\,\dnu)$. The complementary Young's function $\tilde{\Psi}$ of $\tilde{\Phi}$ is given by
			\[
				\tilde{\Psi}(t) = \begin{cases}
					t,&t\leq 1\\
					\mathrm{e}^{t-1},&\text{else.}
				\end{cases}
			\]
			As $\Phi$ satisfies the $\Delta_2$-property near infinity, the dual space of $\LlogL(\Omega,\,\dnu)$ is thus given by the space of measurable functions $h$ that satisfy $\intO \tilde{\Psi}(\abs{h})\dnu<\infty$, which is called $\Lexp(\Omega,\,\dnu)$.
		\end{example}
		
		The following result states that the marginals of a transport plan with density in $\Lorl$ also have density in the respective $\Lorl$ space.
		
		\begin{lemma}\label{thm:proj_contraction}
				  Let $\nu_i\in\Mp(\Omega_i)$ be such that $\nu_i(\Omega_i)<\infty$, for $i=1,2$ and set $\nu := \nu_1 \otimes \nu_2$. Let $\pi\in\Mp(\Omega)$.

                  If $\tfrac{\dpi}{\dnu}\in\Lorl(\Omega,\dnu)$
                  for a quasi-Young's function $\Phi$, then
                  $\tfrac{\bd\ppfw
                    \pi}{\dnu_{i}}\in\Lorl(\Omega_i,\dnu_i)$
                  for $i=1,2$ with
                  \[
                  \big\lVert\tfrac{\bd\ppfw \pi}{\dnu_{i}}\big\rVert_{\Lorl(\Omega_i,d\nu_i)}
                  	\leq \max\of{1, \nu_{3-i}(\Omega_{3-i})} \big\lVert{\tfrac{\dpi}{\dnu}}\big\rVert_{\Lorl(\Omega,\dnu)}\,.
                  \]
                  \end{lemma}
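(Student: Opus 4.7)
The plan is to work with the density $\rho := \tfrac{\dpi}{\dnu} \geq 0$ and, by symmetry, to treat the case $i=1$; then, set $M := \nu_2(\Omega_2)$ (the case $M=0$ makes $\nu$ trivial, so assume $M>0$). First I would invoke Fubini's theorem to identify the density of the marginal: for any measurable $A\subseteq\Omega_1$,
\[
	\ppfw[1]\pi(A) = \int_{A\times\Omega_2}\rho\dnu = \int_A \Big(\int_{\Omega_2}\rho(x_1,x_2)\dnu_2\Big)\dnu_1,
\]
so $\bar\rho(x_1) := \int_{\Omega_2}\rho(x_1,\blank)\dnu_2$ is the Radon--Nikodym derivative of $\ppfw[1]\pi$ with respect to $\nu_1$ (and is finite $\nu_1$-a.e.\ since $\ppfw[1]\pi$ is a finite measure).

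Next, let $\gamma := \lVert \rho\rVert_{\Lorl(\Omega,\dnu)}$, so that $\intO \Phi(\rho/\gamma)\dnu \leq 1$, and set $\tilde\gamma := \max(1,M)\gamma$. The goal is to show $\int_{\Omega_1}\Phi(\bar\rho/\tilde\gamma)\dnu_1 \leq 1$, as this would yield the desired estimate on the Luxemburg norm. The natural tool is Jensen's inequality with respect to the probability measure $\tfrac{1}{M}\dnu_2$; this is the step that forces the case distinction.

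In the case $M\geq 1$, write $\bar\rho/\tilde\gamma = \int_{\Omega_2}(\rho/\gamma)\tfrac{\dnu_2}{M}$, apply Jensen to the convex function $\Phi$, and then integrate in $x_1$ and use Fubini once more to obtain $\int_{\Omega_1}\Phi(\bar\rho/\tilde\gamma)\dnu_1 \leq \tfrac{1}{M}\intO\Phi(\rho/\gamma)\dnu \leq \tfrac{1}{M} \leq 1$ (the final step being valid regardless of the sign of the integrand because $M\geq 1$). In the case $M<1$, one has $\tilde\gamma = \gamma$, so after applying Jensen as before one is left with an excess factor $M$; I would absorb it via the convexity estimate $\Phi(Mt) \leq M\Phi(t) + (1-M)\Phi(0)$ for $t\geq 0$, which is legitimate because $\Phi$ is convex and we evaluate at $M\cdot t + (1-M)\cdot 0$. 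The term $(1-M)\Phi(0)$ is nonpositive: since $\pos\Phi$ is a Young's function one has $\pos\Phi(0) = 0$ and hence $\Phi(0)\leq 0$. This discards the extra term and again yields the bound $\leq 1$.

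The main (minor) obstacle I anticipate is precisely this sign bookkeeping for quasi-Young's functions, since $\Phi$ need not be nonnegative and the standard inequality $\Phi(\alpha t)\leq \alpha\Phi(t)$ for $\alpha\in[0,1]$ is not automatic. Once one notices $\Phi(0)\leq 0$, both cases fit into the bound $\max(1,M)$, matching the statement of the lemma, and the argument for $i=2$ is obtained by swapping the roles of $\Omega_1,\Omega_2$ throughout.
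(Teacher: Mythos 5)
Your proposal is correct and takes the natural route: Fubini's theorem to identify $\bar\rho := \int_{\Omega_{3-i}}\tfrac{\dpi}{\dnu}\,\dnu_{3-i}$ as the marginal density, then Jensen's inequality with respect to the normalized measure $\nu_{3-i}/\nu_{3-i}(\Omega_{3-i})$, with the case split on $\nu_{3-i}(\Omega_{3-i})\lessgtr 1$ and the sign observation $\Phi(0)\leq 0$ producing the constant $\max(1,\nu_{3-i}(\Omega_{3-i}))$. The paper's own proof merely identifies the marginal density and then defers to an external reference (Lemma 2.11 of Clason et al.), so you have essentially reconstructed the cited argument with the same key ingredients.
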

        \begin{proof}
            Setting $\tfrac{\dpi}{\bd(\nu_{1}\otimes\nu_{2})}(x_{1},x_{2}) = f(x_{1},x_{2})$, one observes, that $\tfrac{\bd\ppfw[2]\pi}{\dnu_{2}} = \int_{\Omega_{1}}f(x_{1},x_{2})\dnu_{1}(x_1)$, and similarly for $\tfrac{\bd\ppfw[1]\pi}{\dnu_{1}}$. The proof given for \cite[Lemma 2.11]{clason:2021} then holds with minor modifications.
        \end{proof}
		
		\begin{remark}
			The above result immediately yields that no optimal solution $\bar\pi$ of problem \eqref{eq:reg_kantorovich} can exist for any $\gamma >0$, if $\mu_i\not\ll\lambda_{i}$ for $i\in\{1,2\}$.
		\end{remark}
		
		Next, a few facts are derived, which will be useful for the analysis of both the primal and dual regularized optimal transport problems and $\Gamma$-convergence.
		
		\begin{lemma}\label{thm:lorl_norm_lower_est}
			Let $\Phi$ be a quasi-Young's function and $f$ such that $\norm{f}_{\Lorl(\Omega,\,\dnu)} > 1$. Then,
			$\intO \Phi(\abs{f}) \dnu \geq \norm{f}_{\Lorl(\Omega,\,\dnu)}$.
		\end{lemma}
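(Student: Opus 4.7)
The plan is to combine two ingredients: the infimum definition of the Luxemburg norm (applied to values $\beta$ slightly below $\gamma := \norm{f}_{\Lorl(\Omega,\dnu)}$) with the convexity of $\Phi$ (used to transfer an estimate at scale $\beta$ back to scale $1$). The observation that makes the quasi-Young's case work without modification is $\Phi(0) \leq 0$, which is forced by $\pos\Phi(0) = 0$.

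Concretely, I would fix any $\beta \in (1, \gamma)$; since $\beta$ lies strictly below the infimum $\gamma$, it fails the admissibility condition in the definition of the Luxemburg norm, so $\int_\Omega \Phi(\abs{f}/\beta)\,\dnu > 1$. Next, writing $\abs{f}/\beta = \tfrac{1}{\beta}\abs{f} + \bigl(1 - \tfrac{1}{\beta}\bigr)\cdot 0$ and applying convexity of $\Phi$ gives the pointwise bound
\[
\Phi\bigl(\tfrac{\abs{f}}{\beta}\bigr) \leq \tfrac{1}{\beta}\Phi(\abs{f}) + \bigl(1 - \tfrac{1}{\beta}\bigr)\Phi(0) \leq \tfrac{1}{\beta}\Phi(\abs{f}),
\]
where the last step uses $\Phi(0) \leq 0$ together with $\beta \geq 1$. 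Rearranging to $\Phi(\abs{f}) \geq \beta\,\Phi(\abs{f}/\beta)$, integrating against $\nu$, and combining with the infimum bound yields $\int_\Omega \Phi(\abs{f})\,\dnu \geq \beta\int_\Omega\Phi(\abs{f}/\beta)\,\dnu > \beta$ for every $\beta \in (1,\gamma)$. Passing to the limit $\beta \nearrow \gamma$ then delivers the stated inequality.

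The only point requiring care is that in the quasi-Young's setting $\Phi$ may be negative near the origin, so one should briefly verify that $\int_\Omega \Phi(\abs{f}/\beta)\,\dnu$ is well-defined and that the infimum characterization of $\gamma$ truly yields strict inequality for $\beta < \gamma$. Both issues dissolve immediately: $\Phi$ is convex on $[0,\infty)$ and therefore bounded below, while $\nu(\Omega) < \infty$, so the negative part of the integrand contributes a uniformly finite amount. Beyond this bookkeeping I do not anticipate a genuine obstacle; the argument is essentially a one-line convexity computation once the infimum is unpacked.
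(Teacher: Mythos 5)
Your proof is correct, and it appears to take the same route as the paper: the paper's proof reads ``Noting that $\Phi(0)\leq 0$, the proof given for \cite[Lemma 2.6]{clason:2021} holds with minor adjustments,'' and the standard argument behind that lemma is exactly the convexity computation $\Phi(\abs{f}/\beta)\leq\tfrac{1}{\beta}\Phi(\abs{f})+(1-\tfrac{1}{\beta})\Phi(0)\leq\tfrac{1}{\beta}\Phi(\abs{f})$ combined with the infimum characterization of the Luxemburg norm. You correctly identified both the key adaptation ($\Phi(0)\leq 0$, which is exactly what the paper highlights) and the reason the negative part of the integrand causes no harm ($\Phi$ bounded below, $\nu(\Omega)<\infty$), so the argument goes through.
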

        \begin{proof}
            Noting that $\Phi(0) \leq 0$, the proof given for \cite[Lemma 2.6]{clason:2021} holds with minor adjustments.
        \end{proof}
	
		For complementary Young's's functions $\Phi$ and $\Psi$, we have the following result on conjugating scaled Young's functions.
	
		\begin{lemma}\label{thm:yf:conj_of_gammaphi}
			Let $\Phi$ be a Young's function, $\Psi=\Phi^*$ and $\gamma\neq 0$. 
			\begin{enumerate}
				\item Then for $\tilde{\Psi}(t) := \gamma \Psi(\nicefrac t\gamma)$ it holds that $\tilde{\Psi}^* = \gamma\Phi$.
				\item Let $\Theta$ be a quasi-Young's function with $\pos\Theta=\Phi$. Then for $\widetilde{\Theta^*}(t) := \gamma \Theta^*(\nicefrac t\gamma)$ it holds that $(\widetilde{\Theta^*})^* = \gamma\Theta$.
			\end{enumerate}
		\end{lemma}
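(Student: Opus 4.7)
The plan is a direct computation from the definition of the Fenchel conjugate combined with the Fenchel--Moreau biconjugation identity. Although the statement allows $\gamma \neq 0$, the construction really only makes sense for $\gamma > 0$, since a negative multiple of a (quasi-)Young's function is no longer of the same type and the substitution I will use requires $\gamma > 0$ to biject $[0,\infty)$ onto itself; I would restrict to this case at the outset.

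For (1), I would unfold the definition and perform the change of variable $u = t/\gamma$:
\[
  \tilde{\Psi}^*(s) = \sup_{t \geq 0}\bigl(st - \gamma\Psi(t/\gamma)\bigr) = \gamma\sup_{u \geq 0}\bigl(su - \Psi(u)\bigr) = \gamma\,\Psi^*(s),
\]
which pulls the scalar out of the supremum. Then, using $\Psi = \Phi^*$ together with Fenchel--Moreau ($\Phi^{**} = \Phi$, valid because $\Phi$ is proper, convex and lsc by \cref{def:yf}), I obtain $\Psi^* = \Phi$, hence $\tilde{\Psi}^* = \gamma\Phi$.

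For (2), the identical substitution applied to $(\widetilde{\Theta^*})^*$ gives $(\widetilde{\Theta^*})^*(s) = \gamma\,\Theta^{**}(s)$. I would then apply Fenchel--Moreau to $\Theta$: by \cref{def:qyfs} the quasi-Young's function $\Theta$ is convex and lsc, and it is proper because it is bounded below (already remarked after \cref{def:qyfs}) and finite wherever $\pos\Theta = \Phi$ is. This yields $\Theta^{**} = \Theta$ and therefore $(\widetilde{\Theta^*})^* = \gamma\Theta$. No step presents a genuine obstacle; the only bookkeeping issue is the convention for conjugating functions originally defined on $[0,\infty)$, which is handled by the extension-by-infinity $\ext{\cdot}$ introduced earlier so that the sup in the conjugate effectively runs over $t \geq 0$.
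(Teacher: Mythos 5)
Your proof is correct and matches the paper's approach: the paper's own proof is the one-line remark that "the assertion follows directly from the definition of the convex conjugate and the Fenchel--Moreau theorem," and you have simply spelled out that computation (the substitution $u = t/\gamma$ pulling the scalar out of the supremum, followed by biconjugation). Your observation that $\gamma \neq 0$ should really be $\gamma > 0$ is a fair catch about the statement's hypothesis — in the paper's applications $\gamma$ is always a positive regularization parameter, so the extra generality was never used.
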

        \begin{proof}
        The assertion follows directly from the definition of the convex conjugate and the Fenchel-Moreau theorem.
        \end{proof}
	
		The following \cref{thm:ws_liminf} gives some useful insights into the behavior of the objective function of problem \eqref{eq:reg_kantorovich} under perturbation of $\pi$. Recall from \cref{assmpt:general} that for Young's functions $\Phi$ we required
		$\lim_{t\to\infty} \nicefrac{\Phi(t)}{t} = \infty$.
	
		\begin{lemma}\label{thm:ws_liminf}
			Let $\nu\in\Mp(\Omega)$, $(\pi_k) \subset \P(\Omega)$ and $\pi\in\P(\Omega)$ such that $\pi_k \wsto \pi$ and $\pi_{k}\ll\nu$. Let
			$g := \ext\Phi$
			for some quasi-Young's function $\Phi$.
			Then the following statements hold.
			\begin{enumerate}
				\item
				Let $\pi\not\ll\nu$. Then
				\[
					\liminf_{k\to\infty} \intO g\of{\frac{\dpi_k}{\dnu}} \dnu = \infty\,.
				\]
				\item
				Let $\pi\ll\nu$. Then
				\[
					\liminf_{k\to\infty} \intO g\of{\frac{\dpi_k}{\dnu}} \dnu\geq  \intO g\of{\frac{\dpi}{\dnu}}\dnu\,.
				\]
			\end{enumerate}
		\end{lemma}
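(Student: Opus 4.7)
Both parts rely on the same mechanism: the superlinear growth of $\Phi$ forces the densities $f_k := \tfrac{\bd\pi_k}{\dnu}$ to form an equi-integrable family in $L^1(\Omega,\dnu)$, which supplies enough compactness to identify any weak limit with $\pi$. First I would reduce to the case where $\liminf_k \intO g(f_k)\dnu$ is finite---otherwise part (i) is immediate and the inequality in part (ii) is trivial---and pass to a subsequence (not relabelled) along which $\intO g(f_k)\dnu$ actually converges to this finite liminf. Note that $\intO f_k\dnu = \pi_k(\Omega)=1$, so $\{f_k\}$ is bounded in $L^1(\Omega,\dnu)$.

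Since $\Phi$ is convex and lsc, it is bounded below by some constant $-M$, so $\pos\Phi(t)\le \Phi(t)+M$ on $[0,\infty)$, yielding $\sup_k \intO \pos\Phi(f_k)\dnu < \infty$. By \cref{assmpt:general}, $\pos\Phi$ is a Young's function with $\pos\Phi(t)/t\to\infty$ as $t\to\infty$, so the de la Vallée--Poussin criterion implies uniform integrability of $\{f_k\}$ in $L^1(\Omega,\dnu)$. The Dunford--Pettis theorem then yields a further subsequence with $f_{k_j}\rightharpoonup f$ weakly in $L^1(\Omega,\dnu)$ for some $f\ge 0$.

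Next I would identify $f\nu$ with the weak* limit $\pi$. Since $\Omega$ is compact, every $\varphi\in\CC(\Omega)$ lies in $L^\infty(\Omega,\dnu)$, so weak $L^1$-convergence gives $\intO \varphi f_{k_j}\dnu\to\intO\varphi f\dnu$, i.e.\ $f_{k_j}\nu\wsto f\nu$ in $\M(\Omega)$. Comparison with $\pi_{k_j}\wsto\pi$ and uniqueness of weak* limits forces $\pi=f\nu$. In case (i) this contradicts $\pi\not\ll\nu$, so the assumption of finite liminf was impossible and part (i) follows. In case (ii) we have identified $f=\tfrac{\dpi}{\dnu}$.

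For case (ii) I would then invoke the classical weak-$L^1$ lower semicontinuity of convex integral functionals: the map $u\mapsto \intO g(u)\dnu$ is convex and, via Fatou's lemma applied to $g+M\ge 0$, strongly lsc on $L^1(\Omega,\dnu)$, hence weakly lsc. This yields
\[
\intO g(\tfrac{\dpi}{\dnu})\dnu \;=\; \intO g(f)\dnu \;\le\; \liminf_{j\to\infty} \intO g(f_{k_j})\dnu \;=\; \liminf_{k\to\infty} \intO g(\tfrac{\bd\pi_k}{\dnu})\dnu.
\]
The main obstacle is bridging the two notions of convergence---weak* of measures versus weak $L^1$ of densities---which hinges on compactness of $\Omega$ (so that $\CC(\Omega)\subset L^\infty(\nu)$) and on the superlinearity of $\Phi$ feeding Dunford--Pettis in the first place.
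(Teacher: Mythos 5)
Your proposal is correct, but it takes a genuinely different route from the paper. The paper invokes the Reshetnyak-type lower semicontinuity result \cite[Theorem 5.19]{fonseca:2007} directly at the level of measures: weak-$*$ lower semicontinuity of $\pi\mapsto\intO g(\tfrac{\dpi}{\dnu})\dnu$ holds up to an additive term $\intO g^\infty(\tfrac{\bd\pi}{\bd\abs{\pi_s}})\bd\abs{\pi_s}$ involving the recession function $g^\infty$ and the singular part $\pi_s$; superlinear growth forces $g^\infty\equiv\infty$ on $(0,\infty)$, and the bulk of the paper's argument is a computation showing $\tfrac{\bd\pi}{\bd\abs{\pi_s}}\geq 1$ on $\spt\abs{\pi_s}$, so that this extra term is $+\infty$ whenever $\pi\not\ll\nu$. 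You instead never leave $L^1(\Omega,\dnu)$: from a finite $\liminf$ you extract de la Vallée--Poussin equi-integrability (the $\sup_k\intO\pos\Phi(f_k)\dnu<\infty$ bound being exactly the hypothesis there), apply Dunford--Pettis to get a weak $L^1$ limit $f$, and identify $f\nu=\pi$ by testing against $\CC(\Omega)\subset L^\infty(\Omega,\dnu)$; part (i) then follows by contradiction and part (ii) by the elementary fact that convex, strongly lsc integral functionals are weakly lsc in $L^1$. Your version is more self-contained (it avoids the heavy Fonseca--Leoni machinery and the computation with the Besicovitch-type derivative $\tfrac{\bd\pi}{\bd\abs{\pi_s}}$) and treats both cases through a single compactness mechanism, while the paper's version is shorter once the external theorem is granted and gives a structural explanation (the recession function) for why singular limits are penalized. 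Both proofs are valid; yours would make a fine replacement, though you should spell out that the weak $L^1$ limit $f\geq 0$ $\nu$-a.e.\ so that $g(f)=\Phi(f)$, and that uniqueness of weak-$*$ limits in $\M(\Omega)$ is what forces $\pi=f\nu$.
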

		\begin{proof}
			Since $g$ grows superlinearly at $\infty$, the recession function $g^{\infty}(t) =  \lim_{h\to\infty} \frac{g(s + ht) - g (s)}{h}$ (which is independent of $s$) is infinite for all $t>0$. By \cite[Theorem 5.19]{fonseca:2007}, it holds for every sequence $\pi_{k}$ which weakly* converges to $\pi$  that
			\[
				\liminf_{k\to\infty} \intO g(\tfrac{\dpi_{k}}{\dnu})\dnu \geq \intO g(\tfrac{\bd\pi}{\bd\nu})\dnu + \intO g^\infty(\tfrac{\bd\pi}{\bd\abs{\pi_{\mathrm{s}}}})\bd{\abs{\pi_s}}\,,
			\]
			where $\pi_{\mathrm{s}}$ denotes the unique measure singular to $\nu$ in the Lebesgue decomposition $\pi = \pi_{\mathrm s} + \pi_{\mathrm{ac}}$ (e.g. \cite[Theorem 1.115]{fonseca:2007}) and $\pi_{\mathrm{ac}}$ denotes the corresponding measure with $\pi_{\mathrm{ac}}\ll\nu$.
			\begin{enumerate}
				\item%
				It suffices to show $\frac{\bd\pi}{\bd\abs{\pi_{\mathrm{s}}}}(x) >0$ for every $x\in\spt \abs{\pi_{\mathrm{s}}}$.
				First note that since $\pi\in\P(\Omega)$, $\pi_{\mathrm{s}}$ is non-negative and hence $\abs{\pi_{\mathrm{s}}} = \pi_{\mathrm{s}}$. Let now $C$ be a bounded, convex closed set containing the origin in its interior. Then by \cite[Definition 1.156]{fonseca:2007}, for every $x\in\spt \pi_{\mathrm{s}}$ it holds that
				\begin{align*}
					\frac{\bd\pi}{\bd \pi_{\mathrm{s}}}(x) &= \lim_{r\searrow 0} \frac{\pi((x + rC) \cap \Omega)}{\pi_{\mathrm{s}}((x + rC) \cap \Omega)}\\
					&=  \lim_{r\searrow 0} \frac{\pi_{\mathrm{ac}}((x + rC) \cap \Omega) + \pi_{s}((x + rC) \cap \Omega)}{\pi_{\mathrm{s}}((x + rC) \cap \Omega)}\\
					&=  \lim_{r\searrow 0} \frac{\pi_{\mathrm{ac}}((x + rC) \cap \Omega)}{\pi_{\mathrm{s}}((x + rC) \cap \Omega)} + 1\\
					&\geq 1\,,
				\end{align*}
				since $\pi_{\mathrm{s}}((x + rC) \cap \Omega)>0$ for all $r$ because of $x\in\spt \pi_{\mathrm{s}}$. In fact,
				\[
					\lim_{r\searrow 0} \pi_{\mathrm{ac}}((x + rC) \cap \Omega)=0
				\]
				and  $\frac{\bd\pi}{\bd\abs{\pi_{\mathrm{s}}}}(x) = 1$ for every $x\in\spt \abs{\pi_{\mathrm{s}}}$.
				
				\item%
				The second statement follows directly, since for $\pi\ll\nu$ it holds that $\pi_{\mathrm{s}} = 0$. \qedhere
			\end{enumerate}
		\end{proof}
	
	\section{Existence of Solutions}\label{sec:existence}

		In this section, we show strong duality for the regularized mass transport \eqref{eq:reg_kantorovich} using Fenchel duality in the spaces $\P(\Omega)$ and $\CC(\Omega)$. The result will then be used to study the question of existence of solutions for both the primal and the dual problem.
		\begin{theorem}[Strong duality]\label{thm:str:dual}
			Let $\Phi$ be a quasi-Young's function and \cref{assmpt:general} hold.
			If $(\ext{\Phi})^*\of{\nicefrac{-c}{\gamma}}$ is integrable w.r.t. $\lambda$, then the predual problem to \eqref{eq:reg_kantorovich} is
			\begin{equation}\tag{P*}\label{eq:dual_reg_kantorovich}
				\sup_{\substack{\alpha_i\in\CC(\Omega_i),\\i=1,2}} \intO[1] \alpha_1 \dmu_1 + \intO[2]\alpha_2\dmu_2
				- \gamma\intO (\ext\Phi)^*\of{\frac{{\alpha_1}\oplus{\alpha_2}-c}{\gamma}} \dlambda
			\end{equation}
			and strong duality holds. Furthermore, if the supremum is finite, \eqref{eq:reg_kantorovich} possesses a minimizer.
		\end{theorem}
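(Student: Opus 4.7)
The plan is to view the predual \eqref{eq:dual_reg_kantorovich} as the primal of a Fenchel--Rockafellar duality pair on $\CC(\Omega_1)\times\CC(\Omega_2)$ and identify \eqref{eq:reg_kantorovich} with its Fenchel dual on $\M(\Omega)$. Set $X:=\CC(\Omega_1)\times\CC(\Omega_2)$ and $Y:=\CC(\Omega)$, let $\Lambda:X\to Y$ be the continuous linear map $\Lambda(\alpha_1,\alpha_2):=\alpha_1\oplus\alpha_2$, and introduce
\[
	F(\alpha_1,\alpha_2):=-\intO[1]\alpha_1\dmu_1-\intO[2]\alpha_2\dmu_2,\qquad G(u):=\gamma\intO(\ext\Phi)^*\Bigl(\tfrac{u-c}{\gamma}\Bigr)\dlambda.
\]
Then $\eqref{eq:dual_reg_kantorovich}=-\inf_{\alpha\in X}[F(\alpha)+G(\Lambda\alpha)]$, and the dual variable naturally lives in $Y^*=\M(\Omega)$.

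First I would verify the classical constraint qualification. The superlinearity of $\Phi$ from \cref{assmpt:general} forces $(\ext\Phi)^*$ to be real-valued on $\R$ and hence continuous; since $\Omega$ is compact, $\lambda$ is finite, and $c$ is continuous, dominated convergence shows that $G$ is finite and continuous on all of $\CC(\Omega)$, while the integrability hypothesis on $(\ext\Phi)^*(-c/\gamma)$ ensures $G$ is proper. Since $F$ is a bounded linear functional, $\dom F=X$, so the qualification (continuity of $G$ at $\Lambda 0$ with $0\in\dom F$) is trivially met. Fenchel--Rockafellar then yields strong duality together with attainment of the dual maximum:
\[
	\inf_\alpha[F(\alpha)+G(\Lambda\alpha)]=\max_{\pi\in\M(\Omega)}\bigl[-F^*(-\Lambda^*\pi)-G^*(\pi)\bigr].
\]

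Next I would compute the two conjugates explicitly. The adjoint $\Lambda^*$ sends $\pi\in\M(\Omega)$ to $(\ppfw[1]\pi,\ppfw[2]\pi)$, and a direct calculation gives $F^*(-\Lambda^*\pi)=0$ precisely when $\ppfw[i]\pi=\mu_i$ for $i=1,2$, and $+\infty$ otherwise. For $G^*$, the change of variable $v=(u-c)/\gamma$ reduces it to
\[
	G^*(\pi)=\intO c\dpi+\gamma\sup_{v\in\CC(\Omega)}\Bigl[\intO v\dpi-\intO(\ext\Phi)^*(v)\dlambda\Bigr].
\]
By the standard conjugate formula for integral functionals on $\CC(\Omega)$ paired with $\M(\Omega)$ (e.g.~\cite[Theorem 5.19]{fonseca:2007} applied after the Lebesgue decomposition $\pi=\pi_{\mathrm{ac}}+\pi_{\mathrm s}$), this supremum equals $\intO\ext\Phi(\tfrac{\bd\pi_{\mathrm{ac}}}{\dlambda})\dlambda$ plus a recession-function contribution from $\pi_{\mathrm s}$. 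The superlinearity of $\Phi$ makes that recession function $+\infty$ away from the origin, while $\ext\Phi(t)=+\infty$ for $t<0$ rules out negative densities; together these force $\pi\ll\lambda$ and $\pi\geq 0$ whenever $G^*(\pi)<\infty$, and yield $G^*(\pi)=\intO c\dpi+\gamma\intO\ext\Phi(\tfrac{\dpi}{\dlambda})\dlambda$ on the admissible cone.

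Assembling the pieces, the Fenchel dual equals $-\inf\eqref{eq:reg_kantorovich}$ -- the marginal constraints come from $F^*$, and the normalization $\pi(\Omega)=1$ is automatic from $\mu_i\in\P(\Omega_i)$ -- so strong duality translates into $\sup\eqref{eq:dual_reg_kantorovich}=\inf\eqref{eq:reg_kantorovich}$. Finiteness of this common value, combined with the attainment of the Fenchel maximum, produces the desired minimizer $\bar\pi\in\P(\Omega)$ of \eqref{eq:reg_kantorovich}. The main technical obstacle is the conjugate computation for $G$ on $\M(\Omega)$: pushing the sup over $\CC(\Omega)$ through the Lebesgue decomposition and identifying the singular contribution via the recession function is the delicate point, and it is precisely where the superlinearity assumption on $\Phi$ built into \cref{assmpt:general} pays off by forcing any optimizer to be $\lambda$-absolutely continuous.
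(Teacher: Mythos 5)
Your proposal is correct and, at the level of ideas, is the same Fenchel-duality-in-$\CC(\Omega)$/$\M(\Omega)$ argument the paper gestures at when it invokes \cite[Theorem 4.4.3]{borwein:2005}: you treat \eqref{eq:dual_reg_kantorovich} as the Fenchel primal on $\CC(\Omega_1)\times\CC(\Omega_2)$, check the Attouch--Br\'ezis-type qualification ($G$ finite and continuous on all of $\CC(\Omega)$), and identify the Fenchel dual on $\M(\Omega)$ with \eqref{eq:reg_kantorovich}. The only substantive difference in mechanics is the identification of the dual functional: the paper derives the predual by a Lagrangian/minimax computation and pointwise conjugation in $L^\Phi(\Omega,\dlambda)$ (via \cite[Theorem 2]{rockafellar:1968} and \cref{thm:yf:conj_of_gammaphi}), whereas you compute $G^*$ directly on $\M(\Omega)$ through the Lebesgue decomposition and the recession function of $\ext\Phi$; your route makes the qualification check, dual attainment, and the automatic forcing of $\pi\ll\lambda$, $\pi\geq 0$ explicit, at the price of needing the singular-part conjugation formula (which is not quite \cite[Theorem 5.19]{fonseca:2007} as you cite it, but rather the companion duality theorem of Rockafellar/Bouchitt\'e--Valadier for integral functionals on $\CC$--$\M$ pairings).
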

		\begin{proof}
			Strong duality holds by standard arguments (see e.g. \cite[Theorem 4.4.3]{borwein:2005}) and (assuming a finiteness of the supremum) the primal problem \eqref{eq:reg_kantorovich} possesses a minimizer.
                        
                        To derive the dual problem, we start from the primal problem, express the equality conditions $\int_{\Omega_{2}}\tfrac{\dpi}{\dlambda}\dlambda_{2}=\tfrac{\dmu_{1}}{\dlambda_{1}}$ and $\int_{\Omega_{1}}\tfrac{\dpi}{\dlambda}\dlambda_{1} = \tfrac{\dmu_{2}}{\dlambda_{2}}$ as suprema over continuous functions and get
                        \begin{multline*}
                          \inf_{
                            \substack{\tfrac{\dpi}{\dlambda}\in L^{\Phi}(\Omega,\dlambda),\\
                              \int_{\Omega_{2}}\tfrac{\dpi}{\dlambda}\dlambda_{2} = \tfrac{\dmu_{1}}{\dlambda_{1}},\\
                              \int_{\Omega_{1}}\tfrac{\dpi}{\dlambda}\dlambda_{1}
                              = \tfrac{\dmu_{2}}{\dlambda_{2}}}}
                          \int_{\Omega}c\tfrac{\dpi}{\dlambda} +
                          \gamma\tilde\Phi(\tfrac{\dpi}{\dlambda})\dlambda
                          \\ = \inf_{\tfrac{\dpi}{\dlambda}\in L^{\Phi}(\Omega,\dlambda)}\sup_{\substack{\alpha_i\in\CC(\Omega_i),\\i=1,2}}\int_{\Omega}c\tfrac{\dpi}{\dlambda} + \gamma\tilde\Phi(\tfrac{\dpi}{\dlambda})\dlambda + \int_{\Omega_{1}}\Big(\tfrac{\dmu_{1}}{\dlambda_{1}} - \int_{\Omega_{2}}\tfrac{\dpi}{\dlambda}\dlambda_{2}\Big)\alpha_{1}\dlambda_{1} + \int_{\Omega_{2}}\Big(\tfrac{\dmu_{2}}{\dlambda_{2}} - \int_{\Omega_{1}}\tfrac{\dpi}{\dlambda}\dlambda_{1}\Big)\alpha_{2}\dlambda_{2}\\
                          = \sup_{\substack{\alpha_i\in\CC(\Omega_i),\\i=1,2}}\inf_{\tfrac{\dpi}{\dlambda}\in L^{\Phi}(\Omega,\dlambda)}\Big( \int_{\Omega}(c - \alpha_{1}\oplus\alpha_{2}) \tfrac{\dpi}{\dlambda} + \gamma\tilde\Phi(\tfrac{\dpi}{\dlambda})\dlambda \Big) + \int_{\Omega_{1}}\alpha_{1}\dmu_{1} + \int_{\Omega_{2}}\alpha_{2}\dmu_{2}
                        \end{multline*}
                        The integrand of the first integral in \eqref{eq:dual_reg_kantorovich} is normal, so that it can be conjugated pointwise \cite[Theorem 2]{rockafellar:1968}.
			Carrying out the conjugation with the help of \cref{thm:yf:conj_of_gammaphi}, one obtains the claim.
		\end{proof}
		
		\begin{example}
			\begin{enumerate}
				\item Using $\Phi(t) = t\log t$ and $\lambda_i = \leb_i$, one obtains the result for \LlogL{} as stated in \cite[Theorem 3.1]{clason:2021}. In this case it holds that $(\ext\Phi)^*(r) = \exp(r) = \Phi^{*}(r)$.
				\item Using $\Phi(t) = \frac{1}{2} t^2$ and $\lambda_i = \leb_i$, one obtains the result for $L^2$ as stated in \cite{lorenz:2019}.
                                  In this case it holds that $(\ext\Phi)^*(r) = \max(0,r)^{2}$.
			\end{enumerate}
                        One can show that in general $(\ext\Phi)^{*}(r) = \Phi^{*}(r)$ for $r\geq\inf_{\tau>0}\partial\Phi(\tau)$ and equal to $-\Phi(0)$ otherwise.
		\end{example}
	
		\begin{remark}
			\Cref{thm:str:dual} does \emph{not} claim that the supremum is attained, i.e. that the predual problem \eqref{eq:dual_reg_kantorovich} admits a solution. Moreover, the solutions of \eqref{eq:dual_reg_kantorovich} cannot be unique since one can add and subtract constants to $\alpha_1$ and $\alpha_2$,	respectively, without changing the functional value. If however $(\ext\Phi)^*$ is strictly convex, then the functional in \eqref{eq:dual_reg_kantorovich} is strictly concave up to such a constant and therefore any solution is uniquely determined by this constant. This is the case, e.g., for functions $\Phi$ with superlinear growth at $\infty$.
		\end{remark}
	
	\subsection{Existence result for the primal problem}
		
		The duality result can now be used to address the question of existence of a solution to \eqref{eq:reg_kantorovich}.
		
		\begin{theorem}[Existence of solutions of \eqref{eq:reg_kantorovich}]\label{thm:primal_existence}
            Let \cref{assmpt:general} hold.
            If $(\ext{\Phi})^*\of{\nicefrac{-c}{\gamma}}$ is integrable w.r.t. $\lambda$ and
			\begin{equation}\label{cond:prod_of_fcts}
				\tfrac{\dmu_i}{\dlambda_{i}} \in\Lorl(\Omega_i,\dlambda_{i}),\,i=1,2\,\Rightarrow\,\tfrac{\bd(\mu_1\otimes \mu_2)}{\dlambda} \in\Lorl(\Omega,\dlambda)
			\end{equation}
                        we have that problem
                        \eqref{eq:reg_kantorovich} admits a minimizer
                        $\bar{\pi}$ if and only if
                        $\tfrac{\dmu_i}{\dlambda_{i}}
                        \in\Lorl(\Omega_i,\dlambda_{i})$
                        for $i=1,2$. In this case,
                        $\bar{\pi} \in\Lorl(\Omega,\dlambda)$ and the minimizer
                        is unique, if $\Phi$ is strictly
                        convex.
		\end{theorem}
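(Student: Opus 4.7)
The plan is to use the strong duality of \cref{thm:str:dual}, which provides existence of a primal minimizer once we know the dual supremum (equivalently, by strong duality, the primal infimum) is finite. The \emph{only if} direction is immediate: if a minimizer $\bar\pi$ exists, finiteness of the objective forces $\intO\ext\Phi(\tfrac{\bd\bar\pi}{\bd\lambda})\bd\lambda < \infty$, so $\tfrac{\bd\bar\pi}{\bd\lambda}\in\Lorl(\Omega,\bd\lambda)$ (which is also the asserted regularity of the minimizer), and \cref{thm:proj_contraction} transfers this Orlicz membership to both marginal densities.

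For the \emph{if} direction, the main step is to exhibit one feasible plan at which the primal objective is finite. The natural candidate is the product measure $\pi_0 := \mu_1\otimes\mu_2$: it is a probability measure on $\Omega$ with the correct marginals, and since $\mu_i\ll\lambda_i$ it is absolutely continuous with respect to $\lambda$ with density $(x_1,x_2)\mapsto\tfrac{\bd\mu_1}{\bd\lambda_1}(x_1)\tfrac{\bd\mu_2}{\bd\lambda_2}(x_2)$. By condition~\eqref{cond:prod_of_fcts} together with \cref{assmpt:general}, this density lies in $\Lorl(\Omega,\bd\lambda)$ and yields a finite regularization integral, while the transport cost $\intO c\,\bd\pi_0$ is finite because $c$ is continuous on the compact domain $\Omega$ and $\pi_0$ is a probability measure. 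Hence the primal infimum is finite; strong duality then gives a finite dual supremum, and the second assertion of \cref{thm:str:dual} delivers a minimizer $\bar\pi$.

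Uniqueness under strict convexity of $\Phi$ follows from a standard convexity argument: on the convex feasible set, the transport cost is linear in $\pi$ while the regularization $\pi\mapsto \gamma\intO\ext\Phi(\tfrac{\bd\pi}{\bd\lambda})\bd\lambda$ is strictly convex in the density. Two distinct minimizers would therefore give, by convex combination, a feasible plan with strictly smaller objective, a contradiction.

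The step I expect to require the most care is the passage from the Luxemburg-norm statement $\tfrac{\bd\pi_0}{\bd\lambda}\in\Lorl(\Omega,\bd\lambda)$ to the \emph{unscaled} finiteness $\intO\ext\Phi(\tfrac{\bd\pi_0}{\bd\lambda})\bd\lambda<\infty$. These are equivalent when $\Phi$ satisfies a $\Delta_2$-type condition but not in full generality; the argument has to rely on the integrability of $\Phi(\tfrac{\bd\mu_i}{\bd\lambda_i})$ built into \cref{assmpt:general} together with the product structure of $\pi_0$ (via Tonelli and the specific form of \eqref{cond:prod_of_fcts}). Once this technical point is settled, the remainder of the argument is routine bookkeeping.
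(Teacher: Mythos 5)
Your strategy is essentially the paper's own (which in turn adapts Clason et al., Theorem~3.3): exhibit $\pi_0 := \mu_1\otimes\mu_2$ as a feasible plan with finite objective, use the strong duality result of \cref{thm:str:dual} to obtain existence once the value is known finite, pass the Orlicz membership of the minimizer's density to the marginal densities via \cref{thm:proj_contraction} for the ``only if'' direction, and invoke strict convexity of $\Phi$ for uniqueness. Structurally the proof is the same.

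The technical point you flag at the end is a genuine gap, and one that \eqref{cond:prod_of_fcts} as literally written does not fill. Finiteness of the Luxemburg norm of $\tfrac{\bd\pi_0}{\bd\lambda}$ tells you only that $\intO \Phi(\tfrac{1}{\gamma'}\tfrac{\bd\pi_0}{\bd\lambda})\dlambda \leq 1$ for \emph{some} $\gamma'>0$; unless this scaling can be taken $\gamma'\leq 1$ (or $\Phi$ is $\Delta_2$ near infinity), this does not yield $\intO \ext\Phi(\tfrac{\bd\pi_0}{\bd\lambda})\dlambda < \infty$, which is what finiteness of the primal objective at $\pi_0$ actually requires. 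Since $\pi_0$ is a probability measure you cannot rescale the plan itself. What does close the gap is not the abstract implication \eqref{cond:prod_of_fcts} but the pointwise submultiplicativity bounds the paper records in the example immediately after the theorem, e.g. $\Phi(xy)\leq C\Phi(x)\Phi(y)$ or $\Phi(xy)\leq C_1 x\Phi(y) + C_2\Phi(x)y$. Combined with Tonelli and the unscaled integrability of $\Phi(\tfrac{\dmu_i}{\dlambda_i})$ from \cref{assmpt:general} (and $\int \tfrac{\dmu_i}{\dlambda_i}\dlambda_i = 1$), those bounds give $\intO \ext\Phi(\tfrac{\bd\pi_0}{\bd\lambda})\dlambda<\infty$ directly. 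For $\Phi(t)=t\log t$ this is exactly the identity $\Phi(xy)=y\Phi(x)+x\Phi(y)$ that the referenced Clason et al.\ proof exploits. So the argument is morally right, but to make it watertight you should either state the pointwise product bound explicitly as your hypothesis (rather than invoking \eqref{cond:prod_of_fcts}), or observe that \eqref{cond:prod_of_fcts} is being read in the strengthened unscaled sense throughout. Your ``only if'' direction and the uniqueness argument are fine as written.
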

		\begin{proof}
			The proof given in \cite[Theorem 3.3]{clason:2021} for $\Phi(t) = t \log t$ holds for arbitrary $\Phi$ and arbitrary product measures $\lambda = \lambda_{1}\otimes\lambda_{2}$,
			 since it only relies on \cref{thm:proj_contraction} and condition \eqref{cond:prod_of_fcts}.
		\end{proof}
	
		\begin{example}
			Since $\tfrac{\bd(\mu_{1}\otimes\mu_{2})}{\dlambda} = \tfrac{\dmu_{1}}{\dlambda_{1}}\otimes\tfrac{\dmu_{2}}{\dlambda_{2}}$, condition \eqref{cond:prod_of_fcts} is satisfied e.g. when $\Phi$ satisfies either $\Phi(xy) \leq C \Phi(x)\Phi(y)$ for some $C>0$ or
			$\Phi(xy) \leq C_1 x\Phi(y) + C_2 \Phi(x)y$
			for some $C_1,C_2 \geq 0$.
			For $\Phi(t) = \nicefrac{t^p}{p}$, $p>1$, both conditions hold trivially. For $\Phi(t) = t\log t$ the second condition holds, since $\log(xy) = \log(x) + \log(y)$.
		\end{example}
	
	\subsection{Existence result for the predual problem with \texorpdfstring{$L^{p}(\Omega,\dlambda)$}{Lp(\unichar{"03A9}, d\unichar{"03BB})} regularization}
	
		The question of existence of solutions to the predual problem \eqref{eq:dual_reg_kantorovich} proves to be difficult for general Young's functions. 
		There are results that show existence for the predual problem in the entropic case~\cite{clason:2021} and in the quadratic case~\cite{lorenz:2019} (both considering the penalty w.r.t. the Lebesgue measure), but their proofs are quite different in nature. 
                In the case where $\lambda = \mu_{1}\otimes\mu_{2}$ (i.e. the product of the marginals) and $\Phi(t) = t\log(t)$, one can show dual existence in a different way, see~\cite{genevay:2019entropy} for a proof based on the convergence of the Sinkhorn method and~\cite{vialard:2019} for a sketch of a proof that uses regularity of the cost function.
                Our methods do not allow to show existence of solutions for the dual problem in the general case considered up to now. Hence, we consider a special case in the following:
                We specialize to the case of $L^{p}(\Omega,\dlambda)$-regularization for a measure $\lambda=\lambda_{1}\otimes\lambda_{2}$ with $\mu_{i}\ll\lambda_{i}$, $i=1,2$.                 
		That is, we use the Young's functions $\Phi(t)= \nicefrac{t^p}{p}$ for $p>1$, and thus, ${\Phi}^*(t) = \nicefrac{t^q}{q}$, with $\nicefrac 1 p + \nicefrac 1 q = 1$, and the predual is actually also the dual.
                Moreover, $(\ext\Phi)^*(s) = \tfrac1q(\pos{s})^{q}$.
		To keep notation clean, $(\pos t)^p$ will abbreviated as $\pos{t}^p$ with slight abuse of notation and similarly for $(\negprt t)^p$.
		
		\begin{assumption}\label{asspt:dual}
			Let \cref{assmpt:general} hold. In addition, let $\Phi(t) = \nicefrac{t^p}{p}$ for $p>1$ and let the cost function $c$ be continuous and fulfill $c \geq c^{\dagger}$ for some constant $c^\dagger > -\infty$. Furthermore, let the marginals $\mu_i$ with $\tfrac{\bd\mu_i}{\bd\lambda_i}\in L^p(\Omega_i,\dlambda_i)$ satisfy
			$\tfrac{\bd\mu_i}{\bd\lambda_i}\geq \delta > 0$
			$\lambda_i$-%
			a.e.
			for $i=1,2$.
		\end{assumption}
                
        Note that the latter condition, i.e that the densities should be bounded away from zero, can be guaranteed by proper choice of $\lambda_{i}$, e.g. $\lambda_{i} = \mu_{i}$ gives $\tfrac{\dmu_{i}}{\dlambda_{i}} = 1$.
        It can not be expected for problem \eqref{eq:dual_reg_kantorovich} to have continuous optimizers ${\alpha_1}$, ${\alpha_2}$, as the following example demonstrates.

        \begin{example}\label{example:no-cont-duals}
            For $i=1,2$ let $\Omega_i = [-1,1]$, $\lambda_i = \leb|_{\Omega_i} + \delta_0$, where $\delta_0$ denotes the Dirac measure at zero, and $\mu_i = \delta_0$. Moreover, let $c \equiv 0$.
            Clearly the minimum in problem \eqref{eq:reg_kantorovich} is $\frac 1p$, which is attained for $\pi = \delta_0\times\delta_0$. By \cref{thm:str:dual}, the optimal value of problem \eqref{eq:dual_reg_kantorovich} is $\frac 1p$, as well. Going on, it holds
            \begin{align*}
                & \sup_{\alpha_i\in\CC(\Omega_i)} \intO[1]\alpha_1\dmu_1 + \intO[2]\alpha_2\dmu_2 -\frac 1q \intO (\alpha_1\oplus\alpha_2)_+^q\dlambda \\
                =& \sup_{\alpha_i\in\CC(\Omega_i)} \alpha_1(0) + \alpha_2(0)  - \frac 1q (\alpha_1(0) + \alpha_2(0))_+^q - \frac 1q\intO (\alpha_1\oplus\alpha_2)_+^q\dleb
            \end{align*}
            If $\alpha_1\oplus\alpha_2\leq 0$, the supremum ($\frac 1p$) can clearly not be attained. Hence, assume $(\alpha_1\oplus\alpha_2)(x)>0$ for some $x\in\Omega$, which implies $\intO (\alpha_1\oplus\alpha_2)_+^q\dleb>0$, as $\alpha_i\in\CC(\Omega_i)$. Thus,
            \begin{align*}
                & \alpha_1(0) + \alpha_2(0)  - \frac 1q (\alpha_1(0) + \alpha_2(0))_+^q - \frac 1q\intO (\alpha_1\oplus\alpha_2)_+^q\dleb\\
                < & (\alpha_1\oplus\alpha_2)(0)  - \frac 1q ((\alpha_1\oplus\alpha_2)(0))_+^q
                \leq 1 - \frac 1q = \frac 1p\,,
            \end{align*}
            where equality holds for $(\alpha_1\oplus\alpha_2)(0)=1$. Due to the strict inequality, the supremum in problem \eqref{eq:dual_reg_kantorovich} can \emph{not} be attained for $\alpha_i\in\CC(\Omega_i)$, $i=1,2$.
        \end{example}

        However, with the help of the following result, we can define a variant of the predual problem, for which existence of minimizers can be shown.

        \begin{lemma}\label{thm:dual:lq:pos}
            Let \cref{assmpt:general} hold and
            let $\alpha_i:\Omega_i \to \R\cup\{\pm\infty\}$, $i=1,2$ be such that $\pos{({\alpha_1}\oplus{\alpha_2} - c)} \in L^q(\Omega,\dlambda)$. Then $\pos{(\alpha_i)}\in L^q(\Omega_i, \dlambda_i)$, $i=1,2$.
        \end{lemma}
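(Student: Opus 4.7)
The plan is a Fubini-slicing argument that uses boundedness of $c$ to reduce the claim to a one-dimensional estimate on a generic slice. Since $c$ is continuous on the compact set $\Omega$, it is bounded by $M := \|c\|_\infty$. The pointwise inequality $(s+t)_+\leq s_+ + |t|$ applied with $s = \alpha_1\oplus\alpha_2 - c$ and $t = c$ yields
\[
(\alpha_1\oplus\alpha_2)_+ \leq (\alpha_1\oplus\alpha_2 - c)_+ + M\,.
\]
As $\lambda$ is a finite measure on the compact set $\Omega$, constants lie in $L^q(\Omega,\bd\lambda)$, so the hypothesis gives $(\alpha_1\oplus\alpha_2)_+ \in L^q(\Omega,\bd\lambda)$.

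Next I apply Tonelli's theorem to the non-negative measurable function $(x_1,x_2)\mapsto (\alpha_1(x_1)+\alpha_2(x_2))_+^q$: its $\lambda$-integral is finite, so for $\lambda_2$-a.e.\ $x_2$ the slice $x_1 \mapsto (\alpha_1(x_1)+\alpha_2(x_2))_+$ lies in $L^q(\Omega_1,\bd\lambda_1)$. I then pick such an $x_2^\ast$ with the additional property $\alpha_2(x_2^\ast)\in\R$, which is possible in the non-degenerate situation where $\alpha_2$ is finite on a set of positive $\lambda_2$-measure. Applying the second elementary inequality $(a-b)_+\leq a_+ + b_-$ with $a=\alpha_1(x_1)+\alpha_2(x_2^\ast)$ and $b=\alpha_2(x_2^\ast)$ gives
\[
(\alpha_1(x_1))_+ \leq \bigl(\alpha_1(x_1)+\alpha_2(x_2^\ast)\bigr)_+ + (\alpha_2(x_2^\ast))_-\,.
\]
The right-hand side is the sum of a function in $L^q(\Omega_1,\bd\lambda_1)$ and a finite constant, so it lies in $L^q(\Omega_1,\bd\lambda_1)$ because $\lambda_1(\Omega_1)<\infty$. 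The symmetric choice of a slice $x_1^\ast\in\Omega_1$ with $\alpha_1(x_1^\ast)\in\R$ delivers $(\alpha_2)_+\in L^q(\Omega_2,\bd\lambda_2)$.

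I expect the only delicate point to be a brief bookkeeping check on the degenerate cases in which no finite-valued slice exists. If $\alpha_i=-\infty$ holds $\lambda_i$-a.e., then $(\alpha_i)_+\equiv 0$ already belongs to $L^q$ trivially, and the symmetric slice argument handles the other marginal whenever at least one finite value of the opposite $\alpha_{3-i}$ is present on a set of positive measure. If $\alpha_i=+\infty$ on a set of positive $\lambda_i$-measure, then the integrability of $(\alpha_1\oplus\alpha_2-c)_+$ forces $\alpha_{3-i}=-\infty$ $\lambda_{3-i}$-a.e., collapsing back to the trivial case just described. Apart from this case analysis, the whole proof is just Tonelli combined with the two pointwise inequalities $(s+t)_+\leq s_+ + |t|$ and $(a-b)_+\leq a_+ + b_-$.
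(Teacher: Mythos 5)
Your proof is correct in the case that matters, but it takes a genuinely different route from the paper. Both proofs begin the same way, using the boundedness of $c$ on the compact set $\Omega$ to pass from $(\alpha_1\oplus\alpha_2-c)_+\in L^q$ to $(\alpha_1\oplus\alpha_2)_+\in L^q$ (the paper does this via the convexity inequality $\left(\tfrac{a+b}{2}\right)^q\leq\tfrac12(a^q+b^q)$, you via $(s+t)_+\leq s_++|t|$; both work because $\lambda$ is finite). The divergence comes next: the paper computes $\|(\alpha_1)_+\|_q$ through its $L^p$--$L^q$ duality representation, restricts to non-negative test functions, inserts the specific test function $g_2=\1_M\lambda_2(M)^{-1/p}$ supported on a positive-measure set $M$ where $\alpha_2\geq 0$, and uses $(\alpha_1)_+\leq(\alpha_1\oplus\alpha_2)_+$ on $\Omega_1\times M$ to bound the supremum. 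You instead apply Tonelli to $(\alpha_1\oplus\alpha_2)_+^q$, extract a single good slice $x_2^\ast$ with $\alpha_2(x_2^\ast)\in\R$, and use the pointwise inequality $(\alpha_1)_+\leq(\alpha_1+\alpha_2(x_2^\ast))_++(\alpha_2(x_2^\ast))_-$ to conclude. Your slicing argument is more elementary (no $L^q$-norm duality) and in one respect more robust: it only needs $\alpha_2$ to be finite on a set of positive measure, whereas the paper's choice of test function needs $\alpha_2\geq 0$ on a set of positive measure.

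On the degenerate cases: your bookkeeping is not quite tight. If $\alpha_1=+\infty$ on a set of positive $\lambda_1$-measure and consequently $\alpha_2=-\infty$ $\lambda_2$-a.e., you write that this ``collapses back to the trivial case''; but then $(\alpha_1)_+=+\infty$ on a set of positive measure, so $(\alpha_1)_+\notin L^q(\Omega_1,\dlambda_1)$ and the conclusion of the lemma actually fails. Similarly, if $\alpha_2=-\infty$ $\lambda_2$-a.e.\ but $\alpha_1$ is arbitrary and finite-valued, the hypothesis $(\alpha_1\oplus\alpha_2-c)_+=0\in L^q$ is vacuously satisfied while $(\alpha_1)_+$ need not be in $L^q$. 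These are genuine gaps in the lemma as stated (the paper's own ``$\lambda_2(M)=0$ is trivial'' remark suffers from the same problem), but they are harmless in context: the lemma is invoked for $\alpha_i\in L^1(\Omega_i,\dlambda_i)$, where the $\alpha_i$ are finite $\lambda_i$-a.e.\ and your slicing argument applies without issue. It would be cleaner to state explicitly that you work under the standing assumption that each $\alpha_i$ is finite $\lambda_i$-a.e., which is all that is used downstream.
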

        \begin{proof}
            First note, that
			\begin{align*}
				\pos{({\alpha_1}\oplus{\alpha_2} - c + c)}^q &\leq {(\pos{({\alpha_1}\oplus{\alpha_2} - c )}+ \pos c)}^q
			    = 2^q \of{\frac{\pos{({\alpha_1}\oplus{\alpha_2} - c )}} 2 + \frac{\pos c} 2}^{\!q}\\
				&\leq 2^{q-1}\of{ \pos{({\alpha_1}\oplus {\alpha_2} -c)}^q + \pos c^q}\,,
			\end{align*}
            so that $\pos{(\alpha_1\oplus\alpha_2)}\in L^q(\Omega, \dlambda)$. This implies $\pos{(\alpha_i)}<\infty$ $\lambda_i$-a.e. for $i=1,2$.

            We now consider $\pos{(\alpha_i)}$, $i=1,2$ separately and start with $i=1$.
            Let $M\subset\Omega_2$ be such that $\alpha_2 \geq 0$ $\lambda_2$-a.e. on $M$. If $\lambda_2(M)=0$ the assertion holds trivially, so we assume $\lambda_2(M)>0$.
            Then,
            \begin{align*}
                \norm{\pos{(\alpha_1)}}_q
                    &= \sup\left\{\intO[1]\pos{(\alpha_1)}g_1\dlambda_1\,\middle|\,g_1\in L^p(\Omega_1, \dlambda_1), \norm{g_1}_p = 1\right\}\\
                    & 
                    \begin{multlined}
                        = \sup \left\{
                        \lambda_2(M)^{\frac 1p - 1} \int_M\intO[1]\pos{(\alpha_1)}(g_1\otimes g_2)\dlambda_1\dlambda_2
                        \,\middle|
                        \vphantom{\lambda_2(M)^{-\tfrac 1p}}\right.\\
                        \hphantom{\sup \left\{ \int_M\intO[1]\pos{(\alpha_1)} \right.}\left.
                        g_1\in L^p(\Omega_1, \dlambda_1), \norm{g_1}_p = 1, g_2 = \1_M\lambda_2(M)^{-\tfrac 1p} \right\}
                    \end{multlined}\\
                    &\leq \lambda_2(M)^{\frac 1p - 1}\cdot \sup \left\{  \int_M\intO[1]\pos{(\alpha_1\oplus\alpha_2)}(g_1\otimes g_2)\dlambda_1\dlambda_2 \,\middle|\, g_i\in L^p(\Omega_i, \dlambda_i), \norm{g_i}_p = 1, i=1,2 \right\}\\ 
                    &\leq \lambda_2(M)^{\frac 1p - 1}\cdot \sup \left\{ \intO\pos{(\alpha_1\oplus\alpha_2)}f\dlambda\,\middle|\, f\in L^p(\Omega, \dlambda), \norm{f}_p = 1 \right\}\\ 
                    &= \lambda_2(M)^{\frac 1p - 1}\cdot \norm{\pos{(\alpha_1\oplus\alpha_2)}}_q < \infty\,,
            \end{align*}
            where the first inequality is justified because thanks to $\pos{(\alpha_1)}\geq 0$ we can take the supremum over $g_1\geq 0$.
            With the same argumentation, we can prove $\pos{(\alpha_2)}\in L^q(\Omega_2, \dlambda_2)$.
        \end{proof}

        Hence, the objective function of problem \eqref{eq:dual_reg_kantorovich} is also well defined for functions ${\alpha_i}\in L^1(\Omega_i,\dlambda_i)$, $i=1,2$, with $\pos{({\alpha_1}\oplus{\alpha_2} - c)} \in L^q(\Omega,\dlambda)$. We now considered the following variant of the predual problem:
		\begin{equation}\tag{P$^\dagger$}\label{eq:dual_aux}
			\begin{multlined}
				\min\left\{%
					\Lambda({\alpha_1},{\alpha_2}) := \frac{1}{q} \norm{\pos{({\alpha_1}\oplus {\alpha_2} -c)}}^q_{L^q(\Omega,\,\dlambda)}
					\vspace*{-1em}%
					- \gamma^{q-1}\intO[1] {\alpha_1} \dmu_1 - \gamma^{q-1}\intO[2]{\alpha_2}\dmu_2
				\right.\\
				\left.\vphantom{\intO[2]{\alpha_2}}\middle|\,%
					\alpha_i\in L^1(\Omega_i,\dlambda_i),\,i=1,2,\,\tfrac1\gamma\pos{({\alpha_1}\oplus{\alpha_2} - c)} \in L^q(\Omega,\dlambda)%
				\right\}\,.%
			\end{multlined}
		\end{equation}
		The strategy in this section is as follows.
		\begin{enumerate}
			\item First, show that problem \eqref{eq:dual_aux} admits a solution $({\bar{\alpha}_1},{\bar{\alpha}_2}) \in L^1(\Omega_1,\dlambda_1) \times L^1(\Omega_2,\dlambda_2)$.
			\item Then, prove that ${\bar{\alpha}_1}$ and ${\bar{\alpha}_2}$ possess higher regularity, namely that they are functions in $L^q(\Omega_i,\dlambda_i)$.
		\end{enumerate}
		The objective function needs to be extended to allow to deal with weak-$*$ converging sequences. To that end, define
		\[
			G:L^q(\Omega,\dlambda)\ni w \mapsto \intO \of{\frac{1}{q} \pos{w}^q - w \mu }\dlambda \in\R \,,
		\]
		where $\mu := \gamma^{q-1}\tfrac{\bd(\mu_1\otimes\mu_2)}{\bd\lambda}$. Note that in the case $\lambda_i = \mu_i$, the variable $\mu$ is given by $1 \cdot \gamma^{q-1}$. Then, thanks to the normalization of ${\mu_1}$ and ${\mu_2}$,
		\[
			\Lambda({\alpha_1},{\alpha_2}) = G({\alpha_1}\oplus {\alpha_2} - c) - \intO c \mu \dlambda\quad \forall{\alpha_1},{\alpha_2} \in L^q\,.
		\]
		Of course, $G$ is also well defined as a functional on the feasible set of problem \eqref{eq:dual_aux} and this functional will be denoted by the same symbol to ease notation. In order to extend $G$ to the space of Radon measures, consider for a given measure $w \in\M(\Omega)$ the Hahn-Jordan decomposition $w=\pos w - \negprt w$ and assume $\tfrac{\bd\pos w}{\dlambda} \in L^q(\Omega,\dlambda)$. Then, $\intO \tfrac{\bd(w_+\mu)}{\dlambda}\dlambda$ is finite for $\mu$ as defined above and $\mu_i$ as in \cref{asspt:dual}. Regarding the negative part, we set $\intO \mu\bd w_-:=\infty$, whenever this expression is not properly defined, as $w_-$ and $\mu$ are both positive. Combining this, we always have $-\intO\mu\bd w\in\R\cup\{\infty\}$ and define
		\[
			G(w) := \intO \frac{1}{q} \frac{\bd\pos{w}^q}{\dlambda} \dlambda - \intO \mu \dw\,.
		\]
		
		\begin{remark}
			If $w\ll \lambda$, then $\tfrac{\bd\pos w}{\dlambda} \in L^1(\Omega,\dlambda)$ and $\tfrac{\bd\pos w}{\dlambda}(x) = \max\{0, \tfrac{\dw}{\dlambda}(x)\}$ $\lambda$-a.e. in $\Omega$. Hence, both functionals denoted by $G$ coincide on $L^q(\Omega,\dlambda)$, which justifies this notation.
		\end{remark}
		
		The following auxiliary results are generalizations of the corresponding results in \cite{lorenz:2019}.
		The first lemma covers the coercivity of $G$ in $L^1(\Omega,\dlambda)$. To keep notation simple, from now on we will abbreviate $\norm{\blank}_{L^p(\Omega, \dlambda)}$ by $\norm{\blank}_p$ and similarly for $\norm{\blank}_{L^p(\Omega_i, \dlambda_i)}$, $i = 1,2$, where the underlying space will be clear from the context.
		
		\begin{lemma}\label{thm:dual:boundedness}
			Let \cref{asspt:dual} hold and suppose that a sequence $(w_n)\subset L^q (\Omega,\dlambda)$ fulfills
			\[
				G(w_n) \leq C < \infty\quad \forall n \in\N  
			\]
			for some $C>0$.
			Then, the sequences $\pos{(w_n)}$ and $\negprt{(w_n)}$ are bounded in $L^q(\Omega,\dlambda)$ and $L^1 (\Omega,\dlambda)$, respectively.
		\end{lemma}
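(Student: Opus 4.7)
The plan is to split the assumption $G(w_n)\leq C$ into contributions from $\pos{(w_n)}$ and $\negprt{(w_n)}$ and then exploit the strict positivity and $L^p$-integrability of the weight $\mu$. Writing the Hahn--Jordan-type decomposition, we have
\[
G(w_n) = \frac{1}{q}\norm{\pos{(w_n)}}_q^q - \intO \pos{(w_n)}\,\mu\dlambda + \intO \negprt{(w_n)}\,\mu\dlambda\,,
\]
so the key quantity to control is $\intO \pos{(w_n)}\,\mu\dlambda$ from above and $\intO \negprt{(w_n)}\,\mu\dlambda$ from below.

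First, note that $\mu = \gamma^{q-1}\tfrac{\dmu_1}{\dlambda_1}\otimes\tfrac{\dmu_2}{\dlambda_2}$ lies in $L^p(\Omega,\dlambda)$: by \cref{asspt:dual} each factor is in $L^p(\Omega_i,\dlambda_i)$, and Fubini gives $\norm{\mu}_p = \gamma^{q-1}\norm{\tfrac{\dmu_1}{\dlambda_1}}_p\norm{\tfrac{\dmu_2}{\dlambda_2}}_p < \infty$. Hence by Hölder's inequality
\[
\intO \pos{(w_n)}\,\mu\dlambda \leq \norm{\mu}_p\,\norm{\pos{(w_n)}}_q\,.
\]
On the other hand, the assumption $\tfrac{\dmu_i}{\dlambda_i}\geq \delta$ $\lambda_i$-a.e. implies $\mu \geq \delta_0 := \gamma^{q-1}\delta^2 > 0$ $\lambda$-a.e., so
\[
\intO \negprt{(w_n)}\,\mu\dlambda \geq \delta_0\,\norm{\negprt{(w_n)}}_1\,.
\]

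Combining these three estimates with $G(w_n)\leq C$ yields
\[
\frac{1}{q}\norm{\pos{(w_n)}}_q^q - \norm{\mu}_p\,\norm{\pos{(w_n)}}_q + \delta_0\,\norm{\negprt{(w_n)}}_1 \leq C\,.
\]
Since $q>1$, the real-valued map $\tau\mapsto \tfrac{\tau^q}{q}-\norm{\mu}_p\tau$ is coercive on $[0,\infty)$ and bounded from below, so the displayed inequality forces $\norm{\pos{(w_n)}}_q$ to be bounded; once this is known, rearranging the same inequality and using $\delta_0>0$ bounds $\norm{\negprt{(w_n)}}_1$ as well. The only subtlety, and really the reason the statement works, is that positivity of $\mu$ on all of $\Omega$ (via the density bound $\delta$) is needed to turn control of the \emph{signed} integral $\int w_n\mu\dlambda$ into control of the negative part in $L^1$; without it one could not separate the two parts. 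The remaining steps are routine.
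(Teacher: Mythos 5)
Your proof is correct and follows essentially the same route as the paper's: rewrite $G$ via the positive/negative parts, bound the positive cross term by H\"older with $\norm{\mu}_p$, bound the negative cross term from below using $\mu\geq \gamma^{q-1}\delta^2$, then read off boundedness of $\norm{\pos{(w_n)}}_q$ from coercivity and of $\norm{\negprt{(w_n)}}_1$ by rearranging. The only (harmless) cosmetic difference is that you verify $\mu\in L^p$ explicitly and package the coercivity argument in terms of the scalar map $\tau\mapsto\tfrac{\tau^q}{q}-\norm{\mu}_p\tau$, whereas the paper phrases the two bounds as two successive inequalities using the same ingredients.
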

		\begin{proof}
			This proof follows the outline of the proof in \cite[Lemma 2.5]{lorenz:2019}.
			
			We rewrite $G$ as $G(w) = \intO \frac1q \pos{w}^q - \pos w \mu \dlambda + \intO \negprt{w} \mu \dlambda$. The positivity of $\mu$ implies
			\[
				\frac 1q\norm{\pos{(w_n)}}_q^q = G(w_n) + \intO \pos{(w_n)} \mu \dlambda - \intO \negprt{(w_n)} \mu \dlambda \leq C + \norm{\mu}_p\norm{\pos{(w_n)}}_q\,,
			\]
			which gives the first assertion.
			The second one can be seen by making use of $\mu\geq \gamma^{q-1}\delta^2$ with $\delta$ from \cref{asspt:dual}, which yields the estimate
			\begin{align*}
				C \geq G(w_n) & = \frac{1}{q}\intO \pos{(w_n)}^q \dlambda - \intO \pos{(w_n)} \mu\dlambda + \intO \negprt{(w_n)} \mu\dlambda\\
				&\geq \frac{1}{q} \norm{\pos{(w_n)}}_q^q - \norm{\mu}_p\norm{\pos{(w_n)}}_q + \gamma^{q-1}\delta^2 \norm{\negprt{(w_n)}}_1\\
				&\geq  - \norm{\mu}_p\norm{\pos{(w_n)}}_q + \gamma^{q-1}\delta^2 \norm{\negprt{(w_n)}}_1\,.
			\end{align*}
			
			Since $\norm{\pos{(w_n)}}_q$ is already known to be bounded, the second assertion holds.
		\end{proof}
		
		The next lemma provides a lower semi-continuity result for $G$ w.r.t. weak-$*$ convergence in $\M(\Omega)$. Note that the extension of $G$ as introduced above is needed, here.
		
		\begin{lemma}\label{thm:dual:wsconv}
			Let \cref{asspt:dual} hold and a sequence $(w_n)\subset L^q (\Omega,\dlambda)$ be given such that $w_n\lambda \wsto \bar{w}$ in $\M(\Omega)$ and $G(w_n) \leq C < \infty$ for all $n\in\N$. Then it holds that $\pos{\bar w}\ll \lambda$ with $\tfrac{\pos{\bd\bar w}}{\dlambda}\in L^q (\Omega,\dlambda)$ and
			\[
				G(\bar{w}) \leq \liminf_{n\to\infty} G(w_n).
			\]
		\end{lemma}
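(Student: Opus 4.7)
The plan is to extract weakly / weak-$*$ convergent subsequences of $(\pos{w_n})$ and $(\negprt{w_n}\lambda)$ separately, identify $\bar w$ via its Lebesgue decomposition, and then apply \cite[Theorem 5.19]{fonseca:2007} to a suitable integrand that captures both the $\pos{w}^q$ term and the pairing with $\mu$ simultaneously.

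First, I would pass to a subsequence along which $G(w_n) \to \liminf_n G(w_n)$. By \cref{thm:dual:boundedness}, $(\pos{w_n})$ is bounded in $L^q(\Omega,\dlambda)$ and $(\negprt{w_n})$ in $L^1(\Omega,\dlambda)$. Reflexivity of $L^q$ and sequential weak-$*$ compactness of bounded subsets of $\M(\Omega)=\CC(\Omega)^*$ (valid since $\Omega$ is compact metric) yield a further diagonal extraction with
\[
    \pos{w_n} \rightharpoonup v \text{ in } L^q(\Omega,\dlambda), \qquad \negprt{w_n}\lambda \wsto \sigma \text{ in } \M(\Omega),
\]
for some $0\leq v\in L^q(\Omega,\dlambda)$ and $\sigma\in\Mp(\Omega)$. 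Testing $w_n\lambda$ against $\phi\in\CC(\Omega)\subset L^p(\Omega,\dlambda)$ and combining the two limits forces $\bar w = v\lambda-\sigma$. Decomposing $\sigma=f_\sigma\lambda+\sigma_s$ into its Lebesgue parts with respect to $\lambda$ and setting $g:=v-f_\sigma$, I get $\bar w = g\lambda-\sigma_s$ with $\sigma_s\geq 0$ singular. Since $\pos g\lambda$ and $\negprt g\lambda+\sigma_s$ are non-negative and mutually singular, uniqueness of Hahn-Jordan gives $\bar w_+=\pos g\lambda\ll\lambda$ with $\tfrac{\bd\bar w_+}{\dlambda}=\pos g\leq v\in L^q(\Omega,\dlambda)$, which establishes the first claim.

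For the liminf inequality, I would apply \cite[Theorem 5.19]{fonseca:2007} to the normal convex integrand $F(x,t):=\tfrac1q\pos{t}^q-\mu(x)t$: it is proper, lower semi-continuous and convex in $t$, jointly measurable, and bounded below by $-\tfrac1p\mu^p\in L^1(\Omega,\dlambda)$ since $\mu\in L^p$. Its recession function computes to $F^\infty(x,t)=\infty$ for $t>0$ and $F^\infty(x,t)=-\mu(x)t$ for $t\leq 0$. Because the Lebesgue decomposition of $\bar w$ reads $\bar w_{\mathrm{ac}}=g\lambda$, $\bar w_{\mathrm s}=-\sigma_s$, one has $\tfrac{\bd\bar w_{\mathrm s}}{\bd\abs{\bar w_{\mathrm s}}}=-1$ with $\abs{\bar w_{\mathrm s}}=\sigma_s$, and the theorem yields
\[
    \int_\Omega F(x,g)\dlambda + \int_\Omega \mu\bd\sigma_s \leq \liminf_n \int_\Omega F(x,w_n)\dlambda = \liminf_n G(w_n).
\]
The finiteness of the right-hand side together with the lower bound on $F$ forces each term on the left to be finite, so that an elementary rearrangement identifies the left-hand side with the extended definition $G(\bar w)=\tfrac1q\int\pos g^q\dlambda-\int \mu g\dlambda+\int\mu\bd\sigma_s$; a final pass back from the extracted subsequence to the full sequence concludes. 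The main technical hurdle is justifying the application of \cite[Theorem 5.19]{fonseca:2007} with an $x$-dependent integrand whose linear coefficient $\mu$ is merely $L^p$ rather than continuous; the point is that the recession contribution $\int\mu\bd\sigma_s$ produced by that theorem is exactly what encodes a potentially $\lambda$-singular part of $\bar w_-$ in the definition of $G$ on measures.
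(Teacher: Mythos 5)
Your identification of the Lebesgue decomposition of $\bar{w}$ is correct and matches the structure of the argument the paper delegates to \cite[Lemma~2.6]{lorenz:2019}: extracting $\pos{w_n}\rightharpoonup v$ in $L^q$ (reflexivity) and $\negprt{w_n}\lambda\wsto\sigma$ in $\M(\Omega)$, pinning down $\bar{w}=v\lambda-\sigma$, splitting $\sigma=f_\sigma\lambda+\sigma_{\mathrm s}$, and invoking Hahn--Jordan uniqueness to get $\bar{w}_+=\pos{g}\lambda$ with $\pos{g}\leq v\in L^q$. That part is solid.

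The second half has a genuine gap, and it is exactly the one you flag but do not close. \cite[Theorem 5.19]{fonseca:2007}, as it is used elsewhere in this paper (in \cref{thm:ws_liminf}), is a Reshetnyak-type lower semicontinuity theorem for \emph{autonomous} integrands $f=f(z)$; that entire chapter of Fonseca--Leoni treats integrands without $x$-dependence. Your integrand $F(x,t)=\tfrac1q\pos{t}^q-\mu(x)t$ is genuinely $x$-dependent through $\mu$, which is only in $L^p(\Omega,\dlambda)$, hence neither continuous nor lower semicontinuous in $x$. The known $x$-dependent lower semicontinuity results for measure-valued arguments require at least Carath\'eodory or joint measurability plus some regularity of $f(\cdot,z)$, and moreover the recession term $\int F^\infty(x,-1)\bd\sigma_{\mathrm s}=\int\mu\bd\sigma_{\mathrm s}$ that your argument produces is evaluated on the $\lambda$-null set carrying $\sigma_{\mathrm s}$, where an $L^p(\lambda)$-class has no canonical value. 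Appealing to Theorem 5.19 here is therefore not justified, and the ``technical hurdle'' you mention is not a side remark but the crux of the proof.

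What the cited proof does instead is to split $G(w_n)=\tfrac1q\intO\pos{(w_n)}^q\dlambda-\intO\mu\pos{(w_n)}\dlambda+\intO\mu\negprt{(w_n)}\dlambda$ and treat the three pieces separately: the first is an autonomous convex functional and is weakly lower semicontinuous on $L^q$, giving $\liminf\geq\tfrac1q\intO v^q\dlambda\geq\tfrac1q\intO\pos{g}^q\dlambda$; the second passes to the limit directly since $\mu\in L^p=(L^q)^*$ and $\pos{(w_n)}\rightharpoonup v$; and the third requires its own argument to relate $\liminf\intO\mu\negprt{(w_n)}\dlambda$ to the limit measure $\sigma$. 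Your single-integrand shortcut collapses these into one step but relies on a theorem that does not cover the case at hand; to make your route rigorous you would either need an $x$-dependent Reshetnyak theorem applicable for $L^p$ coefficients, or you must fall back on the separate treatment of the linear term as above.
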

		\begin{proof}
			The proof given in \cite[Lemma 2.6]{lorenz:2019} only uses \cref{thm:dual:boundedness} and fundamental properties of $L^2(\Omega,\bd\leb)$, which also hold for $L^q(\Omega, \dlambda)$, $q> 1$, and can thus be readily extended.
		\end{proof}
		
		Now
		all prerequisites for proving the existence result for problem \eqref{eq:dual_aux} are gathered.
		
		\begin{proposition}\label{thm:dual-l1-solultion}
			Let \cref{asspt:dual} hold. Then, problem \eqref{eq:dual_aux} admits a solution $({\bar{\alpha}_1},{\bar{\alpha}_2})\in L^1 (\Omega_1,\dlambda_1)\times L^1 (\Omega_2,\dlambda_2)$.
		\end{proposition}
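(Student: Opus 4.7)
I would apply the direct method in the calculus of variations. Let $(\alpha_1^n,\alpha_2^n)$ be a minimizing sequence for \eqref{eq:dual_aux}. Since $\mu_1,\mu_2$ are probability measures, $\Lambda$ is invariant under the joint shift $(\alpha_1,\alpha_2)\mapsto(\alpha_1+t,\alpha_2-t)$, so I may normalize $\int_{\Omega_1}\alpha_1^n\bd\lambda_1=0$. Set $w_n:=\alpha_1^n\oplus\alpha_2^n-c$; then $\Lambda(\alpha_1^n,\alpha_2^n)=G(w_n)-\intO c\mu\bd\lambda$, so $G(w_n)$ is bounded from above and \cref{thm:dual:boundedness} yields that $\pos{w_n}$ is bounded in $L^q(\Omega,\bd\lambda)$ and $\negprt{w_n}$ is bounded in $L^1(\Omega,\bd\lambda)$; in particular $w_n$ is bounded in $L^1(\Omega,\bd\lambda)$.

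Next I would recover $L^1$-bounds on the individual components. Integrating $w_n=\alpha_1^n\oplus\alpha_2^n-c$ in $x_2$ gives
\[
	\alpha_1^n(x_1)=\tfrac{1}{\lambda_2(\Omega_2)}\left[\int_{\Omega_2}w_n(x_1,x_2)\bd\lambda_2(x_2)+\int_{\Omega_2}c(x_1,x_2)\bd\lambda_2(x_2)-\int_{\Omega_2}\alpha_2^n\bd\lambda_2\right].
\]
Integrating further over $\Omega_1$ and using $\int_{\Omega_1}\alpha_1^n\bd\lambda_1=0$ together with the $L^1(\Omega,\bd\lambda)$-bound on $w_n$ shows that the constant $\int_{\Omega_2}\alpha_2^n\bd\lambda_2$ is bounded; hence $\alpha_1^n$ is bounded in $L^1(\Omega_1,\bd\lambda_1)$, and symmetrically $\alpha_2^n$ in $L^1(\Omega_2,\bd\lambda_2)$. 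Extracting (non-relabeled) subsequences I obtain weak-$*$ limits $\alpha_i^n\lambda_i\wsto\bar\nu_i$ in $\M(\Omega_i)$ and $w_n\lambda\wsto\bar w$ in $\M(\Omega)$; \cref{thm:dual:wsconv} yields $\pos{\bar w}\ll\lambda$ with $L^q$-density and $G(\bar w)\le\liminf_n G(w_n)$.

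Testing the linear identity $w_n+c=\alpha_1^n\oplus\alpha_2^n$ against product test functions $\phi_1\otimes\phi_2\in\CC(\Omega_1)\otimes\CC(\Omega_2)$ and passing to the limit produces the structural identity $\bar w+c\lambda=\bar\nu_1\otimes\lambda_2+\lambda_1\otimes\bar\nu_2$ in $\M(\Omega)$. Combining this with $\pos{\bar w}\ll\lambda$ and a Lebesgue-decomposition argument should force the singular parts $\bar\nu_i^{\mathrm{s}}$ of $\bar\nu_i$ (w.r.t. $\lambda_i$) to vanish, so $\bar\nu_i=\bar\alpha_i\lambda_i$ with $\bar\alpha_i\in L^1(\Omega_i,\bd\lambda_i)$ and $\bar w=\bar\alpha_1\oplus\bar\alpha_2-c$ $\lambda$-a.e.; this makes $(\bar\alpha_1,\bar\alpha_2)$ feasible for \eqref{eq:dual_aux}, and $\Lambda(\bar\alpha_1,\bar\alpha_2)=G(\bar w)-\intO c\mu\bd\lambda\le\liminf_n G(w_n)-\intO c\mu\bd\lambda=\liminf_n\Lambda(\alpha_1^n,\alpha_2^n)$ gives optimality.

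The main obstacle is the singular-part argument: the product measures $\bar\nu_1^{\mathrm{s}}\otimes\lambda_2$ and $\lambda_1\otimes\bar\nu_2^{\mathrm{s}}$ are supported on the $\lambda$-null slabs $S_1\times\Omega_2$ and $\Omega_1\times S_2$ (where $\lambda_i(S_i)=0$) and their sum must equal the non-positive singular remainder $-\negprt{\bar w}^{\mathrm{s}}$; ruling out any non-trivial signed singular contribution likely requires a supplementary renormalization—e.g.\ centring the median of $\alpha_2^n$—to invoke \cref{thm:dual:lq:pos} and thereby obtain uniform $L^q$-bounds on $\pos{\alpha_i^n}$ before extracting limits.
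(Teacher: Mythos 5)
Your outline follows the same overall strategy as the paper: direct method, invariance under the one-parameter shift $(\alpha_1,\alpha_2)\mapsto(\alpha_1+t,\alpha_2-t)$, the coercivity \cref{thm:dual:boundedness} and the weak-$*$ lower semicontinuity \cref{thm:dual:wsconv}. The paper, however, does not re-derive the remaining technical work at all — it simply cites \cite[Proposition 2.9, Lemmas 2.7 \& 2.8]{lorenz:2019}, which are precisely the results that (a) recover $L^1$-bounds on the individual components from the bound on $w_n$ and (b) identify the weak-$*$ limit $\bar w$ as an outer sum of $L^1$ functions. You are trying to reconstruct exactly those two lemmas, and this is where the gap is.

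The gap is real and you flag it yourself. After extracting $\alpha_i^n\lambda_i\wsto\bar\nu_i$ in $\M(\Omega_i)$, the structural identity $\bar w + c\lambda = \bar\nu_1\otimes\lambda_2 + \lambda_1\otimes\bar\nu_2$ together with $\pos{\bar w}\ll\lambda$ only tells you that the singular parts $\sigma_i$ of $\bar\nu_i$ satisfy $\sigma_1\otimes\lambda_2 + \lambda_1\otimes\sigma_2 = -(\negprt{\bar w})^{\mathrm s} \leq 0$, hence (restricting to the slabs $S_1\times(\Omega_2\setminus S_2)$ and $(\Omega_1\setminus S_1)\times S_2$) that each $\sigma_i\leq 0$. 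Nothing in what you have written forces $\sigma_i=0$. Your proposed fix — renormalizing the median of $\alpha_2^n$ and invoking \cref{thm:dual:lq:pos} to get uniform $L^q$-bounds on $\pos{(\alpha_i^n)}$ — only improves the compactness of the \emph{positive} parts; the negative parts $\negprt{(\alpha_i^n)}$ remain controlled only in $L^1$, so their weak-$*$ limits can still carry a (non-positive) singular piece, which is exactly the $\sigma_i\leq 0$ scenario that you have not excluded. Moreover, the median normalization spends the single available shift degree of freedom, so you can control the median of at most one of the two families, not both; the symmetric claim for $\pos{(\alpha_2^n)}$ then needs a separate argument. In short, the plan is compatible with the paper's, but the identification step $\bar\nu_i\ll\lambda_i$ — the content of \cite[Lemmas 2.7 \& 2.8]{lorenz:2019} — is left open, and the proposed repair does not close it.
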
 
		\begin{proof}
			In \cite[Proposition 2.9]{lorenz:2019} the statement is proven for $p=2$ via the classical direct method of the calculus of variations using only
			\cite[Lemmas 2.7 \& 2.8]{lorenz:2019} and \cref{thm:dual:boundedness,thm:dual:wsconv}, where
			\cite[Lemmas 2.7 \& 2.8]{lorenz:2019} are rather technical results holding independently of the choice of $p>1$ and $\lambda_1$, $\lambda_2$.
			Hence, the proof also holds for $p>1$.
		\end{proof}
		
		Next, it is shown that $\alpha_i$, $i=1,2$ are indeed functions in $L^q(\Omega_i,\dlambda_i)$.
		
		\begin{theorem}\label{thm:dual:lq}
			Let \cref{asspt:dual} hold and let $p\geq 2$. Then every optimal solution $({\bar{\alpha}_1}, {\bar{\alpha}_2})$ from \cref{thm:dual-l1-solultion} satisfies ${\bar{\alpha}_i}\in L^q(\Omega_i,\dlambda_i)$, $i=1,2$. Moreover, the negative parts of $\bar\alpha_i$ are bounded and the function $\tfrac1{\gamma^{q-1}}{\pos{(\bar\alpha_1\oplus\bar\alpha_2 -c)}^{q-1}}$ has the marginals $\tfrac{\dmu_1}{\dlambda_1}$ and $\tfrac{\dmu_2}{\dlambda_2}$.
		\end{theorem}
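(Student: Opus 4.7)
My plan is to extract all three assertions from the first-order optimality conditions of the convex minimization problem \eqref{eq:dual_aux} at $(\bar\alpha_1,\bar\alpha_2)$. First, I would perturb by $(\bar\alpha_1+t\beta_1,\bar\alpha_2)$ with an arbitrary test function $\beta_1\in L^\infty(\Omega_1,\dlambda_1)$; since $\beta_1\oplus 0$ is bounded and $\lambda$ is finite, this perturbation is feasible for every $t\in\R$. Writing $w:=\bar\alpha_1\oplus\bar\alpha_2 - c$ and differentiating $\Lambda(\bar\alpha_1+t\beta_1,\bar\alpha_2)$ under the integral at $t=0$, the stationarity condition reads
\[
	\intO \pos{w}^{q-1}(\beta_1\oplus 0)\dlambda = \gamma^{q-1}\intO[1]\beta_1\dmu_1.
\]
An application of Fubini's theorem together with the arbitrariness of $\beta_1$ would then give the $\lambda_1$-a.e.\ identity $\int_{\Omega_2}\pos{w}^{q-1}\dlambda_2 = \gamma^{q-1}\tfrac{\dmu_1}{\dlambda_1}$, and the symmetric argument produces the analogous identity for the second marginal. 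This already establishes the marginal assertion of the theorem.

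From the marginal identity I would then deduce $\pos{\bar\alpha_i}\in L^q(\Omega_i,\dlambda_i)$ via a pointwise estimate. Since $\bar\alpha_2\in L^1(\Omega_2,\dlambda_2)$ is finite $\lambda_2$-a.e., I can pick $K>0$ so that $M:=\{\bar\alpha_2\geq -K\}$ has positive $\lambda_2$-measure; on $M$ the integrand is bounded below by $\pos{(\bar\alpha_1(x_1) - K - \norm{c}_\infty)}^{q-1}$, yielding
\[
	\lambda_2(M)\pos{(\bar\alpha_1(x_1) - K - \norm{c}_\infty)}^{q-1} \leq \gamma^{q-1}\tfrac{\dmu_1}{\dlambda_1}(x_1).
\]
Raising to the $p$-th power and invoking $p(q-1)=q$ together with $\tfrac{\dmu_1}{\dlambda_1}\in L^p(\Omega_1,\dlambda_1)$ gives $\pos{\bar\alpha_1}\in L^q(\Omega_1,\dlambda_1)$, and the symmetric argument yields $\pos{\bar\alpha_2}\in L^q(\Omega_2,\dlambda_2)$.

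Boundedness of the negative parts I would then establish by contradiction: if $\bar\alpha_1\leq -L$ on a set $A_L\subset\Omega_1$ of positive $\lambda_1$-measure for arbitrarily large $L$, then for $x_1\in A_L$ the integrand $\pos{(\bar\alpha_1\oplus\bar\alpha_2-c)}^{q-1}$ vanishes outside $E_L:=\{\bar\alpha_2 > L - \norm{c}_\infty\}$ and is there bounded by $C(\pos{\bar\alpha_2}^{q-1}+1)$, using $\bar\alpha_1\leq 0$ and the boundedness of $c$. Combining this with the marginal identity and H\"older's inequality with exponents $p$ and $q$ leads to
\[
	\gamma^{q-1}\delta \leq C\bigl(\norm{\pos{\bar\alpha_2}}_{L^q(E_L,\dlambda_2)}^{q-1}\lambda_2(E_L)^{\nicefrac 1q} + \lambda_2(E_L)\bigr),
\]
whose right-hand side tends to zero as $L\to\infty$ by Markov's inequality and absolute continuity of the Lebesgue integral (using $\pos{\bar\alpha_2}\in L^q$ from the previous step). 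This would contradict $\gamma^{q-1}\delta>0$, so $\negprt{\bar\alpha_i}$ is essentially bounded, and together with the previous paragraph this yields $\bar\alpha_i\in L^q(\Omega_i,\dlambda_i)$. The hardest part will be the opening step: the difference quotient of the $L^q$-term is pointwise bounded by an expression of order $\abs{w}^{q-1}$, which lies in $L^1(\Omega,\dlambda)$ only when $w\in L^{q-1}(\Omega,\dlambda)$. A priori we only know $w\in L^1$ from $\bar\alpha_i\in L^1$ and the boundedness of $c$, and the embedding $L^1\hookrightarrow L^{q-1}$ on the finite measure space $(\Omega,\lambda)$ holds exactly when $q-1\leq 1$, i.e., when $p\geq 2$; this is precisely where the standing restriction $p\geq 2$ becomes essential.
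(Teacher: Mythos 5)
Your proposal is substantively correct and reaches all three assertions, but it takes a somewhat different route from the paper in the part where the standing restriction $p\geq 2$ actually matters, and it misdiagnoses where that restriction enters.

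On the first step, both you and the paper derive the marginal identity from first-order optimality. The paper perturbs with $\varphi\in\test(\Omega_1)$ along $t\searrow 0$ and then invokes the sign-freedom of $\varphi$; you perturb two-sidedly with $\beta_1\in L^\infty$. These are essentially equivalent. For the positive parts, the paper simply cites \cref{thm:dual:lq:pos}, whereas you re-derive $\pos{\bar\alpha_i}\in L^q$ from the marginal identity via the pointwise estimate and $p(q-1)=q$; that argument is fine, just redundant given \cref{thm:dual:lq:pos}. For the negative parts, your argument genuinely differs: the paper considers the decreasing sequence $f_n=\pos{(-n+\bar\alpha_2-c^\dagger)}$, shows $f_n^{q-1}\in L^1$ by embedding $L^{1/(q-1)}\hookrightarrow L^1$ (this is where the paper uses $q\le 2$, i.e.\ $p\ge 2$), and applies monotone convergence to find a threshold $N$; you instead estimate $\int_{E_L}\pos{\bar\alpha_2}^{q-1}\dlambda_2$ by H\"older against $\pos{\bar\alpha_2}\in L^q$ and drive $\lambda_2(E_L)\to 0$ by Markov. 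Both yield the contradiction with the threshold $\delta$.

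The one genuine error is your concluding diagnosis of the $p\ge 2$ restriction. You claim it is needed to dominate the difference quotient in the opening step, arguing that the dominating function is of order $\abs{w}^{q-1}$ and that a priori $w\in L^1$ only. This is not right: the difference quotient of $t\mapsto\pos{(w+t\beta)}^q$ at $t=0$ is dominated (for $\abs{t}\le 1$ and bounded $\beta$) by $q\,\abs{\beta}\,(\pos{w}+\abs{\beta})^{q-1}$, which involves only $\pos w$, and $\pos w=\pos{(\bar\alpha_1\oplus\bar\alpha_2-c)}\in L^q(\Omega,\dlambda)$ by feasibility for \eqref{eq:dual_aux}; hence $(\pos w+\abs{\beta})^{q-1}\in L^p\subset L^1$ and dominated convergence applies for every $p>1$, exactly as the paper itself argues in \eqref{ineq:lq}. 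In the paper, $p\ge 2$ is used only in the treatment of $f_n$ via $\bar\alpha_2\in L^1$; your Markov--H\"older argument replaces that step using $\pos{\bar\alpha_2}\in L^q$ directly and does not appear to use $p\ge 2$ at all, so you should not attribute the restriction to the differentiability step.
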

		\begin{proof}
			We only consider the the negative parts, as for the positive parts we already have $\pos{(\alpha_i)}\in L^q(\Omega_i,\dlambda_i)$ by \cref{thm:dual:lq:pos}.
			
            First note, that for functions $f_i: \Omega_i \to\R$, $i=1,2$ and $g:\Omega_1\to\R$ it holds
            \[
                \pos{((f_1 + g)\oplus f_2)} = \max\Big(0,(f_1\oplus f_2) + g\oplus 0\Big) \leq \pos{(f_1\oplus f_2)} + \pos g\oplus 0\,,
            \]
            where $0$ is to be understood as the constant mapping $x_2 \mapsto 0$.
			Let now $\varphi\in\test(\Omega_1)$ and fix some $0 < t\leq 1$. Then, thanks to
			\begin{equation}\label{eq:dual_sol:lp:aux}
				0\leq \pos{((\bar{\alpha}_1 + t\varphi)\oplus \bar{\alpha}_2 - c)} \leq \pos{(\bar{\alpha}_1\oplus \bar{\alpha}_2 - c)} + t\pos\varphi \oplus 0\,,
			\end{equation}
			\cref{thm:dual-l1-solultion} implies that $\pos{((\bar{\alpha}_1 + t\varphi)\oplus \bar{\alpha}_2 - c)}\in L^q(\Omega,\dlambda)$, so that $(\bar{\alpha}_1 + t\varphi,\bar{\alpha}_2)$ is feasible for problem \eqref{eq:dual_aux}. Therefore, the optimality of $(\bar{\alpha}_1,\bar{\alpha}_2)$ for problem \eqref{eq:dual_aux} yields
			\[
				\frac{1}{q}\intO\frac 1t \of{\pos{((\bar{\alpha}_1 + t\varphi)\oplus \bar{\alpha}_2 - c)}^q - \pos{(\bar{\alpha}_1 \oplus \bar{\alpha}_2 - c)}^q}\dlambda - \gamma^{q-1}\intO[1] \tfrac{\bd\mu_1}{\dlambda_1} \varphi\dlambda_1 \geq 0
			\]
			for all $0<t\leq 1$.
			Owing to the continuous differentiability of $\R\ni r\mapsto \pos r^q\in\R$, the first integrand converges to $q\pos{(\bar{\alpha}_1\oplus \bar{\alpha}_2 - c)}^{q-1} \varphi$ $\lambda$-a.e. in $\Omega$.

			Moreover, for $x\geq 0$, the mapping $[0,1]\ni t \mapsto (x + t\pos\varphi)^q\in\R $ is Lipschitz continuous with Lipschitz constant $q \pos\varphi \of{x + \pos\varphi}^{q-1}$. Together with \eqref{eq:dual_sol:lp:aux}, this gives
			\begin{equation}\label{ineq:lq}
			\begin{split}
				\frac 1t \of{\pos{((\bar{\alpha}_1 + t\varphi)\oplus \bar{\alpha}_2 - c)}^q - \pos{(\bar{\alpha}_1 \oplus \bar{\alpha}_2 - c)}^q}
				&\leq\frac 1t \of{\of{\pos{(\bar{\alpha}_1\oplus \bar{\alpha}_2 - c)} + t\pos\varphi\oplus 0}^q - \pos{(\bar{\alpha}_1 \oplus \bar{\alpha}_2 - c)}^q}\\
				&\leq q \pos\varphi \of{\pos{(\bar{\alpha}_1 \oplus \bar{\alpha}_2 - c)} + \pos\varphi\oplus 0}^{q-1}\\
				&\leq q \pos\varphi \of{2 \max\left\{\pos{(\bar{\alpha}_1 \oplus \bar{\alpha}_2 - c)}\,,\,\pos\varphi\oplus 0\right\}}^{q-1}\\
				&\leq q \pos\varphi 2^{q-1} \of{\pos{(\bar{\alpha}_1 \oplus \bar{\alpha}_2 - c)}^{q-1} + (\pos{\varphi}\oplus 0)^{q-1}}\,,
			\end{split}
			\end{equation}
			since $(x + y)^r \leq \of{2\max\{x\,,\,y\}}^r$ for all $r>0$, $x,y\geq 0$. As $g\in L^{q}(\Omega,\dlambda)$ with $\lambda(\Omega)<\infty$ implies $\intO g^{q-1}\dlambda<\infty$ for all $q>1$ (see e.g. \cite[Proposition 6.12]{folland:1999}), the right-hand side is integrable.
			
			Hence, due to Lebesgue's dominated convergence theorem, passing	to the limit $t \searrow 0 $ is allowed and yields
			\[
				\intO[1]\of{\intO[2] \pos{(\bar{\alpha}_1\oplus \bar{\alpha}_2 - c)}^{q-1}\dlambda_2-  \gamma^{q-1}\tfrac{\bd\mu_1}{\dlambda_1}}\varphi\dlambda_1 \geq 0\,.
			\]
			Since $\varphi\in\test(\Omega_1)$ was arbitrary, the fundamental lemma of the calculus of variations gives
			\begin{equation}\label{eq:lq}
				\intO[2]\pos{(\bar{\alpha}_1 \oplus \bar{\alpha}_2 - c)}^{q-1}\dlambda_2 =  \gamma^{q-1}\tfrac{\bd\mu_1}{\dlambda_1}
			\end{equation}
			$\lambda_1$-a.e. in $\Omega_1$. Next, define a sequence of functions $(f_n)$ by
			\[
				f_n(x_2) := \pos{(-n + \bar{\alpha}_2(x_2) - c^\dagger)}\quad \forall n\in\N\,,
			\]
			where $c^\dagger$ is the lower bound for $c$ from \cref{asspt:dual}.
			It holds that $f_n \in L^{q-1}(\Omega_2,\dlambda_2)$, which can be seen as follows: Since $\bar{\alpha}_2\in L^1(\Omega_2,\dlambda_2)$, clearly $(\bar{\alpha}_2)^{q-1}\in L^{\nicefrac{1}{(q-1)}}(\Omega_2,\dlambda_2)$. Furthermore, since $\Omega_{2}$ is compact and $q\leq 2$, it holds $L^{\nicefrac{1}{(q-1)}}(\Omega_2, \dlambda_2) \hookrightarrow L^1(\Omega_2, \dlambda_2)$. Consequently, $(\bar{\alpha}_2)^{q-1}\in L^1(\Omega_2, \dlambda_2)$ and $f_{n}^{q-1}$ is also integrable for every $n$.

			The functions $f_n$ satisfy $f_n\geq 0$ and $f_n \searrow 0$ $\lambda_2$-a.e. in $\Omega_2$ for $n\to\infty$, so that the monotone convergence theorem gives
			\[
				\intO[2] f_n^{q-1} \dlambda_2 \xrightarrow[n\to \infty]{}0\,.
			\]
			Thus, there exists $N\in\N$ such that
			\[
				\intO[2] \pos{(-N + \bar{\alpha}_2 - c^\dagger)}^{q-1}\dlambda_2 < \gamma^{q-1} \delta\,,
			\]
			with the threshold $\delta>0$ from~\cref{asspt:dual}.
			Now assume that $\bar{\alpha}_1 \leq -N$ $\lambda_1$-a.e. on a set $E\subset\Omega_1$ with $\lambda_1(E)>0$. Then
			\[
				\intO[2] \pos{(\bar{\alpha}_1\oplus \bar{\alpha}_2 -c^\dagger)}^{q-1}\dlambda_2 \leq \intO[2] \pos{(-N+\bar{\alpha}_2 -c^\dagger)}^{q-1} \dlambda_2 <  \gamma^{q-1} \delta \leq \gamma^{q-1}\tfrac{\bd\mu_1}{\dlambda_1}
			\]
			$\lambda_1$-a.e. in $E$,
			which is a contradiction to \eqref{eq:lq}. Therefore $\bar{\alpha}_1 > -N$ $\lambda_1$-a.e. in $\Omega_1$, which even implies that $\negprt{(\bar{\alpha}_1)}\in L^{\infty}(\Omega_1,\dlambda_1)$. Concerning $\negprt{(\bar{\alpha}_2)}$, one may argue exactly the same way to conclude that $\negprt{(\bar{\alpha}_2)}\in L^{\infty}(\Omega_1,\dlambda_1)$, too.
		\end{proof}
		
		\begin{remark}
                  While it seems clear that proving a generalization of \cref{thm:dual:lq} to general Young's functions or even quasi-Young's functions $\Phi$ is likely to be complicated or even impossible without making strict assumptions on $\Phi$, 
		not even the existence result for optimizers in $L^1$ can be generalized directly. The problem occurs in \cref{thm:dual:boundedness},
		which could not be extended to the case of Young's functions or quasi-Young's functions $\Phi$ in this work.
		That additional assumptions on $\Phi$ might be necessary for \cref{thm:dual:boundedness} to hold can be seen as follows.
		
		In the general case, the function $G$ would be defined as
		\[
			G:L^q(\Omega,\dlambda)\ni w \mapsto \intO \of{\gamma\tilde{\Phi}^*\of{\tfrac{w}{\gamma}} - w \mu} \dlambda \in\R \,,
		\]
		where $\mu := \tfrac{\bd({\mu_1}\otimes {\mu_2})}{\dlambda}$ and in the proof of \cref{thm:dual:boundedness} an inequality of the form
		\[
			C \norm{w_n}_{\Lorl[\tilde{\Phi}^*](\Omega,\dlambda)} \leq \gamma \intO \tilde{\Phi}^*\of{\tfrac{w_n}{\gamma}}\dlambda\quad \forall n\in\N
		\]
		would be necessary. For this to hold, it would suffice to know
		\[
			\Phi\of{\norm{f}_{\Lorl(\Omega,\dlambda)}} \leq C \intO \Phi (\abs f) \dlambda
		\]
		for (quasi-)Young's functions $\Phi$, but this is not true in general as the Young's function
		$\Phi(t)=\max(t^2,t^3)$ and $\Omega=(0,1)$ shows.
                Indeed for $f=a\1_{(0,b)}$ for some $a,b\in (0,1)$ one readily computes that  $\norm{f}_{\Lorl(\Omega,\,\dleb)}= a b^{1/3}$ and the above mentioned inequality would be
                \[
                a^2 b^{2/3} \leq C a^2 b\,,
                \]
                which is not possible for any constant $C$ independent of $b$.\footnote{We thank the user \texttt{harfe} from mathoverflow who provided this counterexample to our question \url{https://mathoverflow.net/q/333925}.}
              This counterexample 
indicates that both the growth of $\Phi$ at infinity and at zero are important properties for this problem.
              \end{remark}		
	
	\section{\texorpdfstring{$\Gamma$}{\unichar{"0393}}-Convergence}\label{sec:gamma_conv}

        We return to the general setting, i.e. we only assume that \cref{assmpt:general} holds, and consider results on $\Gamma$-convergence of problems related to problem \eqref{eq:reg_kantorovich}.
		Recall from, e.g., \cite{braides:2002}, that a sequence $(F_n)$ of functionals $F_n:X\to \R\cup\{\infty\}$ on a metric space $X$ is said to $\Gamma$-converge to a functional $F:X\to\R\cup\{\infty\}$, written $F = \Gammalim_{n\to\infty} F_n$, if
		\begin{enumerate}[(i)]
			\item for every sequence $\{x_n\}\subset X$ with $x_n\to x$,
			\begin{equation*}
			F(x) \leq \liminf_{n\to\infty} F_n(x_n),
			\end{equation*}
			\item for every $x\in X$, there is a sequence $\{x_n\}\subset X$ with $x_n\to x$ and
			\begin{equation*}
			F(x) \geq \limsup_{n\to\infty} F_n(x_n).
			\end{equation*}
		\end{enumerate}
		It is a straightforward consequence of this definition that if $F_n$ $\Gamma$-converges to $F$ and $x_n$ is a minimizer of $F_n$ for every $n\in \N$, then every cluster point of the sequence $(x_n)$ is a minimizer to $F$. Furthermore, $\Gamma$-convergence is stable under perturbations by continuous functionals.
	
	\subsection{Continuous case}
		When considering arbitrary measures as marginals, their densities w.r.t. $\lambda_i$ may not be in $\Lorl(\Omega,\dlambda_i)$ and by \cref{thm:primal_existence}, problem \eqref{eq:reg_kantorovich} will not admit a solution in that case.
		One may therefore consider
		smoothed marginals $\mu^\delta_i$ with $\tfrac{\bd\mu_i^\delta}{\dlambda_i}\in\Lorl(\Omega_i,\dlambda_i)$, $i=1,2$, converging to $\mu_1$ and $\mu_2$, respectively and show that the  regularized problem with these marginals $\Gamma$-converges to the
		unregularized problem with the original marginals.
		
		Let $\varphi$ be a smooth, compactly supported, non-negative kernel on $\R^n$ with unit integral (w.r.t. the Lebesgue measure) and set
		\begin{equation*}
			\varphi_r(x)=\tfrac1{r^{n}}\varphi(\tfrac xr)\,,\qquad
			G_r:=\varphi_r\otimes \varphi_r\,.
		\end{equation*}

		Since the marginals and transport plans will be smoothed by convolutions, the domains $\Omega_1$ and $\Omega_2$ will be extended slightly to take care of boundary effects. 
		Hence, let $\tilde{\Omega}_1,\,\tilde{\Omega}_2$ be compact supersets of $\Omega_1,\,\Omega_2$, respectively, such that
		\[
			\Omega_i + \spt \varphi \subseteq \tilde{\Omega}_i,\quad i=1,2\,,
		\]
		which is large enough to contain the supports of the smoothed marginals $\mu_i^r$, $i=1,2$ for $r\leq 1$ (and the width of the convolution kernels will be assumed to be small enough for this in the following).
		For a function (or measure) $f$ on $\Omega_1$ denote by $\tilde f$ the extension of $f$ onto $\tilde{\Omega}_1$ by zero (and analogously for functions and measures on $\Omega_2$ and $\Omega_1\times\Omega_2$).
		Let $\hat\lambda_i$ be the extension of $\lambda_i$ onto $\tilde\Omega_i$ by the Lebesgue measure and $\hat\lambda = \hat\lambda_1 \otimes\hat\lambda_2$.
		Let $\hat c$ be a continuous extension of $c$ onto $\tilde{\Omega}_1\times\tilde{\Omega}_2$ and let
		\[
		F_\gamma[\pi]
			=\int_{\tilde{\Omega}_1\times\tilde{\Omega}_2}\hat c\bd \pi+\gamma\int_{\tilde{\Omega}_1\times\tilde{\Omega}_2}\ext\Phi(\tfrac{\dpi}{\dhlambda})\dhlambda\,,%
		\]
		where we set the second integral to $+\infty$, if $\pi\not\ll\hlambda$,
		and
		\begin{align*}
			E_\gamma^{{\nu_1},{\nu_2}}[\pi]
			&=\begin{cases}
				F_\gamma[\pi]&\text{if }0\leq\pi\in\P(\tilde{\Omega}_1\times\tilde{\Omega}_2),\,\ppfw[i]{\pi}={\nu_i},\,i=1,2\,,\\
				\infty&\text{else,}
			\end{cases}
		\end{align*}
        for $\nu_i\in\Mp(\Omega_i)$, $i=1,2$.

		First, we state an auxiliary result ensuring that the marginal constraints are preserved by convolution. For simplicity, we state it for measures on $\R^{n}$ (but we could restrict everything to the respective domains $\Omega_{i}$, $\tilde \Omega_{i}$).
        Note that for $\nu\in\M(\Omega)$ the expression $\varphi_r * \nu$ can~--~thanks to the smoothness of $\varphi_r$~--~be interpreted both as smooth function or as measure in $\M$ with that smooth function as density.
        Here, we choose to interpret $\varphi_r * \nu$ as measure.
        A proof is given in \cref{appendix:nuconv:ppfw}.
		\begin{lemma}\label{thm:nuconv:ppfw}
			Let $\mu_1, \mu_2\in\P(\R^n)$ and let $\pi\in\P(\R^n\times\R^n)$ with $\ppfw{\pi} = \mu_i$, $i=1,2$.
			Let
			$\mu_i^\delta := \varphi_\delta*\mu_i,\,i=1,2$
            and
            $\pi_\delta:=G_{\delta}*\pi$.
			Then $\ppfw{\pi_\delta} = \mu_i^\delta$ for $i=1,2$.
		\end{lemma}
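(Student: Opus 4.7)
The plan is to verify $(\proj{i})_\# \pi_\delta = \mu_i^\delta$ by showing equality of the integrals of an arbitrary test function $f\in\CCb(\R^n)$ against both measures. I will do the case $i=1$; the case $i=2$ is symmetric. Recall that the convolution of a measure $\nu$ on $\R^{n}$ with a smooth nonnegative kernel $\psi$ of unit integral can be viewed as the measure $\psi\leb * \nu$, i.e.\ the pushforward of the product measure $\psi\leb\otimes\nu$ under the addition map $(y,x)\mapsto x+y$. Analogously, $\pi_\delta = G_\delta*\pi$ is the pushforward of $G_\delta\leb\otimes\pi$ under the componentwise addition map on $\R^n\times\R^n$.

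The key computation is then as follows. For $f\in\CCb(\R^n)$, the change of variables formula for pushforwards yields
\begin{align*}
    \int_{\R^n} f\bd(\proj{1})_\#\pi_\delta
    &= \int_{\R^n\times\R^n} f(z_1)\bd\pi_\delta(z_1,z_2)\\
    &= \int_{\R^n\times\R^n}\int_{\R^n\times\R^n} f(x_1+y_1)\varphi_\delta(y_1)\varphi_\delta(y_2)\bd y_1\bd y_2\bd\pi(x_1,x_2).
\end{align*}
Since $\varphi_\delta\otimes\varphi_\delta$ and $\pi$ are both finite and the integrand is bounded, Fubini's theorem applies, and integrating out $y_2$ (which contributes a factor of $1$) gives
\[
    \int_{\R^n} f\bd(\proj{1})_\#\pi_\delta
    = \int_{\R^n\times\R^n}\underbrace{\left(\int_{\R^n} f(x_1+y_1)\varphi_\delta(y_1)\bd y_1\right)}_{=:h(x_1)}\bd\pi(x_1,x_2).
\]
The inner integrand $h$ depends only on $x_1$ and, by continuity of $f$ and dominated convergence, is continuous and bounded. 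Hence the marginal assumption $(\proj{1})_\#\pi=\mu_1$ gives
\[
    \int_{\R^n} f\bd(\proj{1})_\#\pi_\delta = \int_{\R^n} h\bd\mu_1 = \int_{\R^n}\int_{\R^n} f(x_1+y_1)\varphi_\delta(y_1)\bd y_1\bd\mu_1(x_1) = \int_{\R^n} f\bd\mu_1^\delta,
\]
where the final equality uses the same pushforward description of $\varphi_\delta*\mu_1$. Since $f\in\CCb(\R^n)$ was arbitrary and both $(\proj{1})_\#\pi_\delta$ and $\mu_1^\delta$ are finite Radon measures, they coincide.

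There is no real obstacle here: the only care needed is to keep the convolution conventions straight (so that one uses the same bookkeeping for $\pi_\delta$ and for $\mu_i^\delta$) and to invoke Fubini, which is justified by boundedness of $f$, the compact support of $\varphi_\delta$, and finiteness of $\pi$.
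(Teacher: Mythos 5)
Your proof is correct and rests on the same core computation as the paper's: Fubini applied to the defining integrals of the convolutions, together with the fact that $G_\delta=\varphi_\delta\otimes\varphi_\delta$ factors so that the $y_2$-integral contributes a unit factor, and that the resulting inner integrand depends only on $x_1$ so the marginal condition can be invoked. The paper phrases this at the level of Lebesgue densities (showing $\tfrac{\dmu_1^\delta}{\dleb}=\tfrac{\bd\ppfw[1]\pi_\delta}{\dleb}$ pointwise) while you phrase it dually via arbitrary test functions $f\in\CCb(\R^n)$; this is a cosmetic difference, not a genuinely different route.
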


		\begin{theorem}[$\Gamma$-convergence for smoothed marginals] 
			\label{thm:conv_convergence}
			Let \cref{assmpt:general} hold and let $(\gamma , \delta) \phito 0$ denote
			\[
				\gamma\to0,\qquad \delta\to0,\qquad \gamma \pos{\Phi}\of{\frac{1}{{\delta}^{2n}}} \to0\,.
			\]
			Define the smoothed marginals as ${\mu_i^\delta}=\varphi_\delta*\tilde{\mu}_i$ for $i=1,2$.
			Then it holds that
			\begin{equation*}
				\Gammalim_{(\gamma , \delta)\phito 0} E_\gamma^{{\mu_1^\delta},{\mu_2^\delta}}=E_0^{{\mu_1},{\mu_2}}
			\end{equation*}
			with respect to weak-$*$ convergence in $\M({\Omega}_{1}\times{\Omega}_{2})$.
			Moreover, if ${\gamma},\delta\to0$ are chosen such that
			\[
				\gamma\big\lVert\tfrac{\bd\mu_1^\delta}{\dhlambda_1}\big\rVert_{\Lorl(\Omega_1,\dhlambda_1)}\to\infty
				\quad\text{or}\quad \gamma\big\lVert\tfrac{\bd\mu_2^\delta}{\dhlambda_2}\big\rVert_{\Lorl(\Omega_2,\dhlambda_2)}\to\infty\,,
			\]
			then $E^{{\mu_1^\delta},{\mu_2^\delta}}_\gamma$ does not have a finite ${\Gamma}$-limit. More precisely, even for feasible $\pi_\delta$ (i.e. with marginals $\mu_i^\delta$) it holds that
			\[
				\lim_{(\gamma , \delta)\phito 0} E^{{\mu_1^\delta},{\mu_2^\delta}}_\gamma[\pi_\delta] = \infty\,.
			\]
		\end{theorem}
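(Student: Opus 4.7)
The plan is to verify the two defining inequalities of $\Gamma$-convergence with respect to weak-$*$ convergence in $\M(\tilde{\Omega}_1\times\tilde{\Omega}_2)$, and then establish the divergence claim. For the liminf inequality, given $\pi_k\wsto\pi$, I may pass to a subsequence along which $E_{\gamma_k}^{\mu_1^{\delta_k},\mu_2^{\delta_k}}[\pi_k]$ realises the liminf; if this value is infinite there is nothing to show, and otherwise every $\pi_k$ is a probability measure on $\tilde{\Omega}_1\times\tilde{\Omega}_2$ with $\pi_k\ll\hlambda$ and $\ppfw{\pi_k}=\mu_i^{\delta_k}$. Weak-$*$ continuity of the push-forward together with $\mu_i^{\delta_k}\wsto\mu_i$ forces $\ppfw{\pi}=\mu_i$, so $\pi$ is admissible for $E_0^{\mu_1,\mu_2}$. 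Convergence of the cost term follows from continuity of $\hat c$ on the compact set $\tilde{\Omega}_1\times\tilde{\Omega}_2$, and since $\pos\Phi$ is a Young's function, $\Phi$ itself is convex and lower semi-continuous, hence bounded below by some $-C$, so that $\gamma_k\int\ext\Phi(\blank)\dhlambda\geq-C\gamma_k\hlambda(\tilde{\Omega}_1\times\tilde{\Omega}_2)\to 0$.

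For the recovery sequence, given any admissible $\pi$ (the inadmissible case being vacuous) I set $\pi_\delta:=G_\delta*\tilde\pi$. \Cref{thm:nuconv:ppfw} identifies its marginals as $\mu_i^\delta$ and one has $\pi_\delta\wsto\pi$, so continuity of $\hat c$ gives $\int\hat c\,\bd\pi_\delta\to\int c\,\bd\pi$. For the regularization term, $\pi_\delta$ carries a smooth Lebesgue density bounded by $\norm{\varphi}_\infty^2/\delta^{2n}$; translating this into a bound $\tfrac{\bd\pi_\delta}{\dhlambda}\leq C/\delta^{2n}$, monotonicity of $\pos\Phi$ together with $\ext\Phi\leq\pos\Phi$ yields $\gamma\int\ext\Phi(\tfrac{\bd\pi_\delta}{\dhlambda})\dhlambda\leq\gamma\pos\Phi(C/\delta^{2n})\hlambda(\tilde{\Omega}_1\times\tilde{\Omega}_2)\to 0$ by the scaling hypothesis $\gamma\pos\Phi(1/\delta^{2n})\to 0$.

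For the divergence statement, let $\pi_\delta$ be any sequence of feasible plans with marginals $\mu_i^\delta$. \Cref{thm:proj_contraction} applied with $\nu_i=\hlambda_i$ produces a constant $c>0$ with $\norm{\tfrac{\bd\pi_\delta}{\dhlambda}}_{L^\Phi}\geq c\,\norm{\tfrac{\bd\mu_i^\delta}{\bd\hat\lambda_i}}_{L^\Phi}$; multiplying by $\gamma$, the right-hand side diverges by hypothesis, hence eventually $\norm{\tfrac{\bd\pi_\delta}{\dhlambda}}_{L^\Phi}>1$ and \cref{thm:lorl_norm_lower_est} gives $\int\ext\Phi(\tfrac{\bd\pi_\delta}{\dhlambda})\dhlambda\geq\norm{\tfrac{\bd\pi_\delta}{\dhlambda}}_{L^\Phi}$. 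Chaining these bounds with the (bounded) cost term delivers $E_\gamma^{\mu_1^\delta,\mu_2^\delta}[\pi_\delta]\to\infty$. The main technical obstacle throughout will be the step linking the smooth Lebesgue density of $G_\delta*\tilde\pi$ to its $\hlambda$-density, since $\hlambda$ need not dominate Lebesgue on $\Omega$; a clean treatment may require either a mild compatibility hypothesis between $\lambda$ and Lebesgue, or a refinement of the mollification (for instance tilting the convolution by $\tfrac{\bd\hlambda}{\bd\leb}$) so that $\pi_\delta\ll\hlambda$ with the controlled density bound is automatic.
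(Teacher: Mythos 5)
Your proposal follows essentially the same route as the paper: the liminf inequality via weak-$*$ continuity of the cost pairing and of the pushforward constraints together with the uniform lower bound $-C$ on $\Phi$; the recovery sequence $\pi_\delta=G_\delta*\tilde\pi$ with a density bound of order $1/\delta^{2n}$ feeding into the scaling hypothesis $\gamma\pos\Phi(1/\delta^{2n})\to 0$; and the divergence claim via \cref{thm:proj_contraction} followed by \cref{thm:lorl_norm_lower_est}. All three pieces line up with the published argument.

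Your closing remark about the ``technical obstacle'' is apt and in fact points at a step that the paper's own proof leaves implicit. Young's convolution inequality controls the \emph{Lebesgue} density of $G_\delta*\tilde\pi$ by $\norm{G_\delta}_\infty\leq C/\delta^{2n}$, whereas the regularization functional $F_\gamma$ integrates $\ext\Phi$ of the $\hlambda$-density. Passing from the former to the latter requires both $\leb\ll\hlambda$ (so that $\pi_\delta\ll\hlambda$ at all, otherwise $F_\gamma[\pi_\delta]=\infty$ by definition) and a uniform bound on $\tfrac{\dleb}{\dhlambda}$ on $\Omega$. Neither follows from \cref{assmpt:general} alone, which asks only that $\lambda_i$ be finite with full support: a singular-continuous $\lambda_i$ with full support violates the first, and a $\lambda_i$ with Lebesgue density degenerating to zero inside $\Omega_i$ violates the second. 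So your flag identifies a genuine gap --- but one shared with the paper rather than introduced by you; a mild compatibility hypothesis such as $\lambda_i\geq c\,\leb$ on $\Omega_i$ (or your suggested tilted mollification) would close it and is tacitly in force in the motivating special cases $\lambda=\leb$ and $\lambda=\mu_1\otimes\mu_2$ with bounded-below marginal densities.
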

		\begin{proof}
			This proof follows the outline of the proof in \cite[Theorem 5.1]{clason:2021}. 
			
			\begin{enumerate}[i)]
				\item%
				\emph{$\liminf$-condition:}
				Let $\pi_\delta\wsto{\pi}$, then $\lim_{\delta\to0}F_0[\pi_\delta] = F_0[\tilde{\pi}]$ due to $\hat c$ being continuous and bounded.
				Furthermore, 
				\[
					\int_{\tilde{\Omega}_1\times\tilde{\Omega}_2}\ext\Phi(\tfrac{\bd\check{\pi}}{\dhlambda})\dhlambda\geq-C\cdot\hlambda(\tilde{\Omega}_{1}\times\tilde{\Omega}_{2})
				\]
				for some $C>0$ only dependent on $\Phi$ for any $\check{\pi}\geq 0$.
				Thus,
				\begin{equation*}
					F_0[\tilde{\pi}]
					=\lim_{(\gamma , \delta)\phito 0}F_0[\pi_\delta]-\gamma C\cdot\hlambda\of{\tilde{\Omega}_{1}\times\tilde{\Omega}_{2}}
					\leq\liminf_{(\gamma , \delta)\phito 0}F_\gamma[\pi_\delta]\,.
				\end{equation*}
				Finally, the pushforward operator is weak-$*$ continuous, which implies that the marginal constraints are preserved under weak-$*$ convergence of $\pi_\delta$, ${\mu_1^\delta}$, and ${\mu_2^\delta}$ (note that $\mu_i^\delta \wsto \mu_i$).
				
				\item%
				\emph{$\limsup$-condition:}
				It suffices to consider a recovery sequence for ${\pi}\in\P({\Omega}_1\times{\Omega}_2)$, because  for ${\pi}\in\P(\tilde{\Omega}_1\times\tilde{\Omega}_2)\setminus\P({\Omega}_1\times{\Omega}_2)$ the marginal conditions for ${\mu_1}$ and ${\mu_2}$ can never be satisfied.
				
				If $E_0^{{\tilde\mu_1},{\tilde\mu_2}}[\tilde{\pi}]=\infty$, then $E_\gamma^{{\tilde\mu_1},{\tilde\mu_2}}[\tilde{\pi}]=\infty$ for every ${\gamma}$ and the $\limsup$ condition holds trivially. Let therefore $E_0^{{\tilde\mu_1},{\tilde\mu_2}}[\tilde{\pi}]$ be finite and
				set $\pi_\delta=G_{\delta}*\tilde{\pi}$.
				Then $0\leq\pi_\delta$ and $\pi_\delta\wsto\tilde{\pi}$ as well as $\ppfw[i]{\pi_\delta}={\mu_i^\delta}$ for $i=1,2$, by \cref{thm:nuconv:ppfw}.
				Finally, by Young’s convolution inequality,
				\[
					\tfrac{\bd\pi_\delta}{\dhlambda}\leq\norm{G_{\delta}}_{L^\infty(\Omega,\dhlambda)} \big\lVert\tfrac{\bd\tilde{\pi}}{\dhlambda}\big\rVert_{L^1(\Omega,\dhlambda)}\leq\frac{C}{{\delta}^{2n}}
				\]
				for some constant $C>0$.
				Thus,
				\begin{equation}\label{eq:conv_conv:lim_sup}
					\begin{split}
						\int_{\tilde{\Omega}_1\times\tilde{\Omega}_2}\ext\Phi(\tfrac{\dpi_\delta}{\dhlambda})\dhlambda
						&=\int_{\tilde{\Omega}_1\times\tilde{\Omega}_2}\Phi(\tfrac{\dpi_\delta}{\dhlambda})\dhlambda
						\leq \int_{\tilde{\Omega}_1\times\tilde{\Omega}_2}\pos{\Phi}(\tfrac{\dpi_\delta}{\dhlambda})\dhlambda\\
						&\leq \hlambda\of{\tilde{\Omega}_{1}\times\tilde{\Omega}_{2}} \gamma\pos{\Phi}\of{\frac{C}{{\delta}^{2n}}}\,,
					\end{split}
				\end{equation}
				and the right hand side vanishes for $(\gamma , \delta) \phito 0$ by the assumption on the (coupled) convergence of $\gamma$ and	$\delta$. Therefore,
				\begin{equation*}
					E_0^{{\tilde\mu_1},{\tilde\mu_2}}[\tilde{\pi}]
					=\lim_{(\gamma , \delta)\phito 0}\of{F_0[\pi_\delta] + \hlambda\of{\tilde{\Omega}_{1}\times\tilde{\Omega}_{2}}\gamma\pos{\Phi}\of{\frac{C}{{\delta}^{2n}}}\!}
					\geq\lim_{(\gamma , \delta)\phito 0}F_\gamma[\pi_\delta]\,.
				\end{equation*}
				
				\item%
				For the second statement, recall from \cref{thm:proj_contraction} that
				\[
					\begin{split}
					{\gamma}\big\lVert\tfrac{\bd\mu_1^\delta}{\dhlambda_1}\big\rVert_{\Lorl(\Omega_1,\dhlambda_1)} &\leq \gamma\max\of{1,\hlambda_2\of{\Omega_2}}\big\lVert\tfrac{\dpi_\delta}{\dhlambda}\big\rVert_{\Lorl(\Omega,\dhlambda)}\,,\\
					{\gamma}\big\lVert\tfrac{\bd\mu_2^\delta}{\dhlambda_2}\big\rVert_{\Lorl(\Omega_2,\dhlambda_2)} &\leq \gamma\max\of{1,\hlambda_1\of{\Omega_1}}\big\lVert\tfrac{\dpi_\delta}{\dhlambda}\big\rVert_{\Lorl(\Omega,\dhlambda)}\,.
					\end{split}
				\]
				By \cref{thm:lorl_norm_lower_est}, this immediately yields $F_\gamma[\pi_\delta]\to\infty$, and the assertion follows.\qedhere
			\end{enumerate}
		\end{proof}

		In the case $\lambda = \leb\times\leb$, the assumption $\gamma\pos{\Phi}\of{\frac{1}{{\delta}^{2n}}} \to0$
        is much stronger than necessary for some Young's functions.
        For example consider $\Phi(t) = \nicefrac{t^p}{p}$ with $p>1$ or $\Phi(t) = t\log t$. In those cases the condition
		\[
			{\gamma}{\delta^{2n}}\pos{\Phi}\of{\frac{1}{{\delta}^{2n}}} \to0
		\]
		suffices, as the following result states. For $\Phi(t) = t\log t$ this gives exactly the result in \cite[Theorem 5.1]{clason:2021}.
		
		\begin{corollary}\label{thm:conv_conv:monoton}
            Let $\lambda = \leb\times\leb$ and
			let $\Phi$ be a quasi-Young's function such that $t^{-1} \Phi(t)$ is monotone. Then it suffices to assume
			\[
				{\gamma}{\delta^{2n}}\pos{\Phi}\of{\frac{1}{{\delta}^{2n}}} \to0
			\]
			in \cref{thm:conv_convergence}.
		\end{corollary}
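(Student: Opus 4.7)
The plan is to revisit only the $\limsup$-step in the proof of \cref{thm:conv_convergence}, since the $\liminf$-inequality, the reduction to $\pi\in\P(\Omega_1\times\Omega_2)$, the definition $\pi_\delta:=G_\delta*\tilde\pi$ of the recovery sequence, and the verification of its marginals via \cref{thm:nuconv:ppfw} are all independent of the coupling between $\gamma$ and $\delta$. The coupling is used only in the estimate \eqref{eq:conv_conv:lim_sup}, where $\pos\Phi(\tfrac{\dpi_\delta}{\dhlambda})$ was bounded crudely by its pointwise maximum $\pos\Phi(C/\delta^{2n})$ and then integrated against the fixed finite measure $\hlambda(\tilde\Omega_1\times\tilde\Omega_2)$; it is this estimate that must be sharpened.

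The key observation is that since $\lambda=\leb\otimes\leb$ and hence $\hlambda=\leb$, Lebesgue translation invariance is available. Because $\tilde\pi$ is a probability measure and $\pos\Phi$ is convex, Jensen's inequality applied pointwise to $(G_\delta*\tilde\pi)(z)=\int G_\delta(z-w)\bd\tilde\pi(w)$, followed by Fubini and translation invariance of Lebesgue measure, gives
\[
\int_{\tilde\Omega_1\times\tilde\Omega_2}\pos\Phi\bigl(\tfrac{\dpi_\delta}{\dhlambda}\bigr)\dhlambda\leq\int_{\R^{2n}}\pos\Phi(G_\delta(z))\bd z.
\]
The substitution $z=\delta y$ together with $G_\delta(\delta y)=\delta^{-2n}(\varphi\otimes\varphi)(y)$ turns the right-hand side into $\delta^{2n}\int\pos\Phi(\delta^{-2n}(\varphi\otimes\varphi)(y))\bd y$. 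Writing $M:=\|\varphi\otimes\varphi\|_\infty$ (a fixed constant independent of $\delta$) and combining the pointwise bound $(\varphi\otimes\varphi)(y)\leq M$ with the monotonicity of $t\mapsto t^{-1}\pos\Phi(t)$ yields $\pos\Phi(\delta^{-2n}(\varphi\otimes\varphi)(y))\leq\tfrac{(\varphi\otimes\varphi)(y)}{M}\pos\Phi(M/\delta^{2n})$, so integrating against $\int\varphi\otimes\varphi=1$ produces
\[
\gamma\int_{\tilde\Omega_1\times\tilde\Omega_2}\pos\Phi\bigl(\tfrac{\dpi_\delta}{\dhlambda}\bigr)\dhlambda\leq\tfrac{\gamma\delta^{2n}}{M}\pos\Phi(M/\delta^{2n}).
\]

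By choosing the mollifier with $\|\varphi\|_\infty\leq 1$ (a normalization-preserving spatial rescaling of any initial kernel), we may assume $M\leq 1$, so that the monotonicity of $\pos\Phi$ and the new coupling assumption $\gamma\delta^{2n}\pos\Phi(1/\delta^{2n})\to 0$ drive the right-hand side to zero. Inserting this refined bound in place of \eqref{eq:conv_conv:lim_sup} leaves the remaining $\limsup$-chain intact, proving the corollary. The main point of the argument is the Jensen-plus-monotonicity trade: Jensen converts the worst-case pointwise density bound into an integral of $\pos\Phi$ against the much more concentrated kernel $G_\delta$, and the monotonicity of $t^{-1}\pos\Phi(t)$ is precisely the structural ingredient needed to extract the missing factor $\delta^{2n}$ from the scaling of $G_\delta$.
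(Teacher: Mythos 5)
Your proof is correct, but it takes a genuinely different route from the paper's. The paper's proof refines \eqref{eq:conv_conv:lim_sup} by the pointwise factorization
\[
	\pos\Phi\bigl(\tfrac{\dpi_\delta}{\dleb}\bigr)
	= \tfrac{\dpi_\delta}{\dleb}\cdot\Bigl(\tfrac{\dpi_\delta}{\dleb}\Bigr)^{-1}\pos\Phi\bigl(\tfrac{\dpi_\delta}{\dleb}\bigr)\,,
\]
then uses the uniform density bound $\tfrac{\dpi_\delta}{\dleb}\leq C/\delta^{2n}$ (from Young's convolution inequality) together with monotonicity of $t\mapsto t^{-1}\pos\Phi(t)$ to bound the second factor by $\tfrac{\delta^{2n}}{C}\pos\Phi(C/\delta^{2n})$, and finally integrates the first factor to $1$ because $\pi_\delta\in\P$. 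Your argument instead pushes $\pos\Phi$ \emph{through} the convolution via Jensen's inequality (using that $\tilde\pi$ is a probability measure and $\pos\Phi$ convex), then applies Fubini and Lebesgue translation invariance to reduce everything to $\int_{\R^{2n}}\pos\Phi(G_\delta)\bd z$, peels off the $\delta^{2n}$ factor by the change of variables $z=\delta y$, and only then invokes the monotonicity of $t^{-1}\pos\Phi(t)$. Both proofs use the constraint $\lambda=\leb\otimes\leb$ in an essential way (the paper via the $L^\infty$ bound for the Lebesgue density, you via translation invariance and rescaling of Lebesgue measure), and both carry a harmless mollifier-dependent constant ($C$ in the paper, $M$ in yours). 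The paper's route is shorter and works with a generic mollifier as fixed in the setup of \cref{thm:conv_convergence}; your route isolates a clean Jensen--Fubini convolution estimate $\int_{\tilde\Omega}\pos\Phi(G_\delta*\tilde\pi)\,\dleb\leq\int_{\R^{2n}}\pos\Phi(G_\delta)\,\dleb$ that is independent of $\tilde\pi$ and could be reused when a pointwise $L^\infty$ bound on the density is unavailable or not sharp, at the small cost of the artificial renormalization $\|\varphi\|_\infty\leq1$ of the mollifier at the end. Your choice of normalization is consistent with the paper's setup since $\tilde\Omega_i$ is chosen after $\varphi$, so this is a legitimate simplification.
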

		\begin{proof}
			A refinement of estimate \eqref{eq:conv_conv:lim_sup} can be given. Using the monotonicity of $t^{-1} \Phi(t)$ we obtain
			\begin{align*}
				\int_{\tilde{\Omega}_1\times\tilde{\Omega}_2} \Phi(\tfrac{\dpi_\delta}{\dleb}) \dleb
				&\leq \int_{\tilde{\Omega}_1\times\tilde{\Omega}_2} \pos{\Phi}(\tfrac{\dpi_\delta}{\dleb}) \dleb\\
				& = \int_{\tilde{\Omega}_1\times\tilde{\Omega}_2} \tfrac{\dpi_\delta}{\dleb}\of{\tfrac{\dpi_\delta}{\dleb}}^{-1} \pos{\Phi} (\tfrac{\dpi_\delta}{\dleb}) \dleb\\
				&\leq \frac{\delta^{2n}}{C} \pos{\Phi}\of{\frac{C}{\delta^{2n}}} \int_{\tilde{\Omega}_1\times\tilde{\Omega}_2}\tfrac{\dpi_\delta}{\dleb} \dleb\\
				&=  \frac{\delta^{2n}}{C} \pos{\Phi}\of{\frac{C}{\delta^{2n}}}
			\end{align*}
			where again the right hand side vanishes by assumption. The assertion follows as in \cref{thm:conv_convergence}.
		\end{proof}		
	
	\subsection{Discretized problems}
	
		Here we describe a discretization of problem \eqref{eq:reg_kantorovich} and show two approximation results:
		\begin{enumerate}[i)]
			\item $\Gamma$-convergence of the discretized and regularized problem towards the continuous, \emph{regularized} problem \eqref{eq:reg_kantorovich}
			\item $\Gamma$-convergence of the discretized and regularized problem towards the continuous, \emph{unregularized} problem \eqref{eq:ot}
		\end{enumerate}
		
        We recall the problem data: Marginals $\mu_{i}\in\P(\Omega_{i})$ and finite measures $\lambda_{i}\in\Mp(\Omega_{i})$ with $\mu_{i}\ll\lambda_{i}$ for $i=1,2$ and a continuous and positive cost function $c$ on $\Omega$. We have $\lambda = \lambda_{1}\otimes\lambda_{2}$ and aim to discretize the problem
        \begin{equation}\label{eq:primal_ot_density}
          \begin{split}
            \min_{\pi\in\P(\Omega)}\int_{\Omega}c\tfrac{\dpi}{\dlambda} + \gamma\ext\Phi(\tfrac{\dpi}{\dlambda})\dlambda,\quad\text{s.t.}\quad \int_{\Omega_{2}}\tfrac{\dpi}{\dlambda}\dlambda_{2} & = \tfrac{\dmu_{1}}{\dlambda_{1}}\quad\text{$\lambda_1$-a.e}\\
            \int_{\Omega_{1}}\tfrac{\dpi}{\dlambda}\dlambda_{1} & = \tfrac{\dmu_{2}}{\dlambda_{2}}\quad\text{$\lambda_2$-a.e}.
          \end{split}
        \end{equation}
        We do a Galerkin discretization with piecewise constant functions. For $k\in\N$ let $(Q^{i,k}_j)$ be a sequence of finite partitions of $\Omega_i$ such that for every $j$ there is an $l$ with $Q_j^{i,k+1} \subset Q_l^{i,k}$ and such that $(Q^{1,k}_i)$, $(Q^{2,k}_j)$, and $I^k_{ij} := Q^{1,k}_i\times Q^{2,k}_j$ satisfy the following assumption:
                
		\begin{assumption}\label{asspt:disc}
			Let $A\subset\Omega$ be a Borel set and $\varepsilon>0$. Then there exists some $K\in\N$ such that for all $K\leq k\in\N$ the sets $A^k_+$, $A^k_-$ defined by
			\begin{align*}
					A^k_- &:= \bigcup_{\set{(i,j)}{I^k_{i,j}\subseteq A }} I^k_{i,j}\,,
					&A^k_+ &:= \bigcup_{\set{(i,j)}{I^k_{i,j}\cap A \neq \emptyset}} I^k_{i,j}
			\end{align*}
			satisfy
			\begin{align}\label{eq:asspt:disc}
				\nu(A^k_+) - \nu(A^k_-)&<\varepsilon\,,
			\end{align}
			for all $\nu\in\Mp(\Omega)$.
		\end{assumption}

		\begin{remark}
			If \cref{asspt:disc} is fulfilled for $\lambda = \lambda_{1}\otimes \lambda_{2}$, condition \eqref{eq:asspt:disc} holds analogously for $\lambda_1$ and $\lambda_2$, which can be seen as follows: For $A^i\subset\Omega_i$, $i=1,2$ let $A^{i,k}_+$ and $A^{i,k}_-$ be defined analogously based on $Q^{i,k}_j$. It holds
			\begin{align*}
			A^{1,k}_-\times\Omega_2 &= (A^1\times\Omega_2)^k_- \subset \Omega\\
			A^{1,k}_+\times\Omega_2 &= (A^1\times\Omega_2)^k_+ \subset \Omega\,.
			\end{align*}
			Thus,
			\begin{align*}
			\lambda_1(A^{1,k}_+) - \lambda_1(A^{1,k}_-)
			&= \ppfw[1]\lambda(A^{1,k}_+) - \ppfw[1]\lambda(A^{1,k}_-)
			= \lambda(A^{1,k}_+\times\Omega_2) - \lambda(A^{1,k}_-\times\Omega_2)\\
			&= \lambda((A^1\times\Omega_2)^k_+) - \lambda((A^1\times\Omega_2)^k_-) < \varepsilon
			\end{align*}
			by \eqref{eq:asspt:disc} and the argument holds analogously for $\lambda_2$.
		\end{remark}
	
		\Cref{asspt:disc} yields the following auxiliary result about piecewise constant approximation of measures.
		
		\begin{lemma}\label{thm:pwconst_wsto}
			Let \cref{asspt:disc} hold. Let $\nu\in\Mp(\Omega)$, $\nu^i\in\Mp(\Omega_i)$, $i=1,2$ and define
			\[
				\nu_k := \sum_{i,j} \frac{\nu(I^k_{i,j})}{\lambda(I^k_{i,j})} \1_{I^k_{i,j}}\,,
                \qquad
				\nu^i_k := \sum_{j} \frac{\nu^i(Q^{i,k}_j)}{\lambda_i(Q^{i,k}_j)} \1_{Q^{i,k}_j}\,,i=1,2\,.
            \]
			Then $\nu_k\lambda\wsto \nu$ and  $\nu^i_k\lambda_i \wsto\nu^i$.
		\end{lemma}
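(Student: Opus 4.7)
The core idea is that $\nu_k\lambda$ coincides with $\nu$ on any union of partition cells $I^k_{i,j}$, and Assumption~\ref{asspt:disc} lets us sandwich an arbitrary Borel set between two such unions whose $\nu$-masses differ by at most $\varepsilon$. From setwise convergence on Borel sets, weak-$*$ convergence against continuous test functions will follow by a uniform-continuity/simple-function approximation.

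\emph{Step 1 (sandwich estimate).} For any Borel $A\subset\Omega$, a direct computation gives
\[
  (\nu_k\lambda)(A) \;=\; \sum_{i,j}\frac{\nu(I^k_{i,j})}{\lambda(I^k_{i,j})}\,\lambda(I^k_{i,j}\cap A).
\]
Since $0\le\lambda(I^k_{i,j}\cap A)\le\lambda(I^k_{i,j})$, and equality on the right holds precisely when $I^k_{i,j}\subseteq A$, while the term vanishes if $I^k_{i,j}\cap A=\emptyset$, we obtain
\[
  \nu(A^k_-)\;\le\;(\nu_k\lambda)(A)\;\le\;\nu(A^k_+).
\]
Combined with the trivial bounds $\nu(A^k_-)\le\nu(A)\le\nu(A^k_+)$, Assumption~\ref{asspt:disc} yields $|(\nu_k\lambda)(A)-\nu(A)|\le\nu(A^k_+)-\nu(A^k_-)\to 0$ for every Borel $A$. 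In particular, $(\nu_k\lambda)(\Omega)=\nu(\Omega)$, so the total masses are uniformly bounded.

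\emph{Step 2 (from setwise to weak-$*$ convergence).} Fix $\varphi\in\CC(\Omega)$ and $\varepsilon>0$. By compactness of $\Omega$, $\varphi$ is bounded and uniformly continuous; partition the range of $\varphi$ into finitely many half-open intervals of length $<\varepsilon$ and let $A_1,\dots,A_L$ be the Borel preimages. Choose $x_l\in A_l$ and set $\psi=\sum_{l=1}^L\varphi(x_l)\1_{A_l}$, so that $\norm{\varphi-\psi}_{\infty}\le\varepsilon$. Since $\nu_k\lambda$ and $\nu$ are non-negative with $(\nu_k\lambda)(\Omega)=\nu(\Omega)$, we get
\[
  \Big|\int\varphi\,\bd(\nu_k\lambda)-\int\varphi\,\dnu\Big|
  \;\le\;\Big|\sum_{l=1}^L\varphi(x_l)\big[(\nu_k\lambda)(A_l)-\nu(A_l)\big]\Big|+2\varepsilon\,\nu(\Omega),
\]
and the first term tends to $0$ by Step~1 (finite sum). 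As $\varepsilon$ is arbitrary, $\nu_k\lambda\wsto\nu$ in $\M(\Omega)$.

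\emph{Step 3 (marginals).} The remark following Assumption~\ref{asspt:disc} establishes \eqref{eq:asspt:disc} for $\lambda_i$; the same extension-by-$\lambda_{3-i}$ argument applied to any $\nu^i\in\Mp(\Omega_i)$ (using the set $A^i\times\Omega_{3-i}\subset\Omega$ and Assumption~\ref{asspt:disc} for the measure $\nu^i\otimes\lambda_{3-i}\in\Mp(\Omega)$) shows that the analogue of Assumption~\ref{asspt:disc} holds on $\Omega_i$ for all non-negative Radon measures. Steps~1 and~2 then apply verbatim on $\Omega_i$ with the partition $(Q^{i,k}_j)$, yielding $\nu^i_k\lambda_i\wsto\nu^i$.

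\emph{Main obstacle.} There is no deep step: the sandwich bound in Step~1 is the heart of the argument and follows immediately from non-negativity of $\nu_k$ and the definitions of $A^k_\pm$. The only subtlety is that Assumption~\ref{asspt:disc} requires the approximation to hold \emph{uniformly} over all $\nu\in\Mp(\Omega)$, which is what lets us upgrade the reasoning to the marginals via the product extension in Step~3.
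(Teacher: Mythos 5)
Your proof is correct and matches the paper's argument: the sandwich estimate $\nu(A^k_-)\le(\nu_k\lambda)(A)\le\nu(A^k_+)$ together with $\nu(A^k_-)\le\nu(A)\le\nu(A^k_+)$ and \cref{asspt:disc} is exactly the paper's computation, and your Steps 2 and 3 simply make explicit what the paper compresses into ``which yields the first assertion'' and ``analogous argumentation.'' No gaps; the added detail on passing from setwise convergence to weak-$*$ convergence via simple-function approximation of $\varphi\in\CC(\Omega)$, and on transferring \cref{asspt:disc} to $\Omega_i$ via the product measure $\nu^i\otimes\lambda_{3-i}$, is a welcome elaboration rather than a different route.
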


        A proof of \cref{thm:pwconst_wsto} can be found in \cref{appendix:disc}.
        We now set
        \begin{align*}
          \mu_{1,i}^{k} &:= \mu_{1}(Q^{1,k}_{i}), &\mu_{2,j}^{k} &:= \mu_{2}(Q^{2,k}_{j})\\
          \lambda_{1,i}^{k} &:= \lambda_{1}(Q^{1,k}_{i}), &\lambda_{2,j}^{k} &:= \lambda_{2}(Q^{2,k}_{j})\,.
        \end{align*}
        Then by~\cref{thm:pwconst_wsto}
        \begin{align*}
           \sum_{i} \tfrac{\mu_{1,i}^{k}}{\lambda_{1,i}^{k}}\1_{Q^{1,k}_{i}}\lambda_{1} & \wsto  \mu_{1} & \sum_{j} \tfrac{\mu_{2,j}^{k}}{\lambda_{2,j}^{k}}\1_{Q^{2,k}_{j}}\lambda_{2} & \wsto  \mu_{2}.
        \end{align*} 
        Note that division by zero is not a problem here, since $\lambda_1$ and $\lambda_2$ were assumed to have full support and hence $\lambda^k_{1,i},\lambda^k_{2,j}\neq 0$ for all $i,j,k$.
        We define the finite-dimensional spaces
        \[
              	\calV^{1,k} := \Span{\{\1_{Q^{1,k}_{i}}\mid i\}},\quad \calV^{2,k} := \Span{\{\1_{Q^{2,k}_{j}}\mid j\}},\quad \calV^{k} = \calV^{1,k}\otimes\calV^{2,k} = \Span{\{\1_{I^{k}_{ij}}\mid i,j\}}\,.
        \]
        
        The discrete problem is then one in $\calV^{k}$, namely
        \begin{equation}\tag{PD}\label{eq:reg_kantorivich_discretized_density}
          \inf_{\pi\in\calV^{k}}	\intO c \pi \dlambda+ \gamma \intO \ext\Phi(\pi)\dlambda,\quad
          \text{s.t.}\quad\parbox[t]{0.4\textwidth}{
            $\int_{\Omega_{2}}\pi\dlambda_{2} = \sum_{i}\tfrac{\mu^{k}_{1,i}}{\lambda^{k}_{1.i}}\1_{Q^{1,k}_{i}}$,\\
            $\int_{\Omega_{1}}\pi\dlambda_{1} = \sum_{j}\tfrac{\mu^{k}_{2,j}}{\lambda^{k}_{2.j}}\1_{Q^{2,k}_{j}}$
          }
        \end{equation}
                                      
        If we discretize the sought-after density $\tfrac{\dpi}{\dlambda}$ by
        \[
        \tfrac{\dpi}{\dlambda} = \sum_{ij}p_{ij}\1_{I^{k}_{ij}}\in \calV^{k}\,,
        \]
        we can derive the optimization problem for the unknown coefficients $p_{ij}$ as follows: The objective function is
        \[
        \int_{\Omega}c\tfrac{\dpi}{\dlambda} + \gamma\ext\Phi(\tfrac{\dpi}{\dlambda})\dlambda = \sum_{ij}\int_{I^{k}_{ij}}c\dlambda\; p_{ij} + \gamma\ext\Phi(p_{ij})\lambda^{k}_{1,i}\lambda^{k}_{2,j}.
        \]
        The first marginal constraint is given by
        \begin{align*}
          \int_{\Omega_{2}}\sum_{ij}p_{ij}\1_{Q^{1,k}_{i}}(x_{1})\1_{Q^{2,k}_{j}}(x_{2})\dlambda_{2}(x_{2}) = \sum_{i}\tfrac{\mu^{k}_{1,i}}{\lambda^{k}_{1,i}}\1_{Q^{1,k}_{i}}(x_{1})
        \end{align*}
        and this leads to the equation
        \[
        \sum_{j}p_{ij}\lambda^{k}_{2,j} = \tfrac{\mu^{k}_{1,i}}{\lambda^{k}_{1,i}}\,.
        \]
        A similar equation can be derived for the other marginal constraint.
        With
        \[
        c^{k}_{ij} := \tfrac1{\lambda^{k}_{1,i}\lambda^{k}_{2,j}}\int_{I^k_{ij}}c\dlambda\,,
        \]
        we arrive at the fully discretized problem
        \begin{equation*}
          \begin{split}
            \min_{p}\sum_{ij}\Big(c^{k}_{ij}p_{ij} +
            \gamma\ext\Phi(p_{ij})\Big)\lambda^{k}_{1,i}\lambda^{k}_{2,j},\quad \sum_{j}p_{ij}\lambda^{k}_{2,j} & = \tfrac{\mu^{k}_{1,i}}{\lambda^{k}_{1,i}}\\
             \sum_{i}p_{ij}\lambda^{k}_{1,i} &= \tfrac{\mu^{k}_{2,j}}{\lambda^{k}_{2,j}}.
          \end{split}
        \end{equation*}
        This is a finite-dimensional convex minimization problem with linear constraints. Several general methods could be used to solve this problem numerically, see, e.g.~\cite{blondel:2017,peyre:2019,lorenz:2019,lorenz2019orlicz}.
		The following theorem now guarantees that the discretized and regularized problem converges to the continuous regularized problem \eqref{eq:reg_kantorovich}.
        The proof makes use of the so-called Jensen's inequality (see e.g. \cite[Theorem 2.12.19]{bogachev:2007}), which we recall for the convenience of the reader:
        Let $K,\Omega\subset\R^d$, $M\subset K$ and $\nu\in\Mp(\R^d)$ with $\nu(\Omega) < \infty$.
        Let $\varphi: K \to \R^d$  be convex and let $f: \Omega \to M$ be a $\nu$-integrable function such that $\varphi(f)$ is $\nu$-integrable. Then,
        \[
            \nu(\Omega)\cdot\varphi\Big(\frac{1}{\nu(\Omega)}\intO f\dnu\Big) \leq \intO \varphi(f)\dnu\,.
        \]
		
		\begin{theorem}\label{thm:disc:disc_conv}
			Let \cref{assmpt:general,asspt:disc} hold.
			Let $c\in\CC(\Omega,[0,\infty))$, 
			$\Phi$ be a quasi-Young's function,
			and let $\gamma>0$.	Then the minimization problem \eqref{eq:reg_kantorivich_discretized_density}
			$\Gamma$-converges to problem~\eqref{eq:primal_ot_density} w.r.t. weak-$*$ convergence in $\M(\Omega)$ as $k\to\infty$.
		\end{theorem}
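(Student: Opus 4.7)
The plan is to verify the $\liminf$- and $\limsup$-conditions of $\Gamma$-convergence directly, identifying each discrete density $\pi\in\calV^k$ with the absolutely continuous measure $\pi\lambda\in\M(\Omega)$. Let $F_k$ and $F$ denote the functionals associated with \eqref{eq:reg_kantorivich_discretized_density} and \eqref{eq:primal_ot_density} respectively, extended by $+\infty$ outside their feasible sets. The main ingredients are weak-$*$ continuity of the coordinate pushforwards, the semicontinuity result \cref{thm:ws_liminf}, the piecewise constant approximation \cref{thm:pwconst_wsto}, and Jensen's inequality.

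For the $\liminf$-condition let $\pi_k\lambda\wsto\pi$ in $\M(\Omega)$. Either $\liminf_{k\to\infty} F_k[\pi_k]=\infty$ (trivial) or, along a subsequence, $\pi_k\in\calV^k$ is feasible for \eqref{eq:reg_kantorivich_discretized_density}. The discrete marginal constraints then read $\ppfw[i](\pi_k\lambda)=\sum_l\tfrac{\mu^k_{i,l}}{\lambda^k_{i,l}}\1_{Q^{i,k}_l}\lambda_i$, and by \cref{thm:pwconst_wsto} applied to $\mu_i$ the right-hand sides weak-$*$ converge to $\mu_i$. Weak-$*$ continuity of the pushforwards then forces $\ppfw[i]\pi=\mu_i$. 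Continuity and boundedness of $c$ on the compact set $\Omega$ yield $\intO c\pi_k\dlambda=\intO c\bd(\pi_k\lambda)\to\intO c\dpi$, while \cref{thm:ws_liminf} applied to $\pi_k\lambda$ against the reference measure $\lambda$ gives
\[
    \liminf_{k\to\infty}\intO\ext\Phi(\pi_k)\dlambda \geq \intO\ext\Phi\of{\tfrac{\dpi}{\dlambda}}\dlambda,
\]
with the right-hand side set to $+\infty$ when $\pi\not\ll\lambda$. Combining these yields $F[\pi]\leq\liminf_{k\to\infty}F_k[\pi_k]$.

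For the $\limsup$-condition only feasible $\pi\in\P(\Omega)$ with $\pi\ll\lambda$ and $F[\pi]<\infty$ need to be considered. The recovery sequence is the piecewise constant $L^1$-projection
\[
    \pi_k := \sum_{i,j}\frac{\pi(I^k_{ij})}{\lambda(I^k_{ij})}\1_{I^k_{ij}}\in\calV^k
\]
(with $0/0=0$), which is non-negative with total mass $\sum_{ij}\pi(I^k_{ij})=\pi(\Omega)=1$. Feasibility for \eqref{eq:reg_kantorivich_discretized_density} follows from the identity $\sum_j\pi(I^k_{ij})=\pi(Q^{1,k}_i\times\Omega_2)=\mu_1(Q^{1,k}_i)=\mu^k_{1,i}$ and its analogue for the second marginal. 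By \cref{thm:pwconst_wsto} one has $\pi_k\lambda\wsto\pi$, so the cost term converges. Applying Jensen's inequality on each $I^k_{ij}$ with the convex function $\ext\Phi$ and density $\tfrac{\dpi}{\dlambda}$ produces
\[
    \lambda(I^k_{ij})\,\ext\Phi\of{\frac{\pi(I^k_{ij})}{\lambda(I^k_{ij})}} \leq \int_{I^k_{ij}}\ext\Phi\of{\tfrac{\dpi}{\dlambda}}\dlambda,
\]
and summing over $i,j$ yields $\intO\ext\Phi(\pi_k)\dlambda\leq\intO\ext\Phi(\tfrac{\dpi}{\dlambda})\dlambda$. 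Hence $\limsup_{k\to\infty}F_k[\pi_k]\leq F[\pi]$.

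The main subtlety is not analytical but combinatorial: one must ensure that the piecewise constant average satisfies the discretized marginal constraints \emph{exactly} rather than only asymptotically, which follows cleanly from the identity $\pi(Q^{1,k}_i\times\Omega_2)=\mu_1(Q^{1,k}_i)$ built into the assumption that $\pi$ has marginal $\mu_1$. The possibly singular case $\pi\not\ll\lambda$ in the $\liminf$-step is handled by part (i) of \cref{thm:ws_liminf}, so no additional work is required there.
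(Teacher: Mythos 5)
Your proof is correct and follows essentially the same route as the paper's: the same identification of discrete densities with measures, the same use of \cref{thm:pwconst_wsto} and weak-$*$ continuity of the pushforwards to pass the marginal constraints to the limit, \cref{thm:ws_liminf} for the $\liminf$ of the regularization term, the same piecewise-constant recovery sequence, and Jensen's inequality for the $\limsup$ bound. The only cosmetic differences are that you case on $\liminf F_k[\pi_k]$ rather than on $F[\pi]$ in the $\liminf$ step (equivalent), and the $(0/0=0)$ convention you add is unnecessary since $\spt\lambda_i=\Omega_i$ already ensures $\lambda(I^k_{ij})>0$.
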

		\begin{proof}
            We define, $F,F_k:\M(\Omega) \to \R\cup\{\infty\}$ via
			\begin{equation*}
				F_k(\pi) =
					\begin{cases}
						\intO c \dpi+ \gamma \intO \ext\Phi(\tfrac{\dpi}{\dlambda})\dlambda,
                            & \tfrac{\bd\ppfw[1]\pi}{\dlambda_1} = \sum_{i}\tfrac{\mu^{k}_{1,i}}{\lambda^{k}_{1,i}}\1_{Q^{1,k}_{i}}\,, \tfrac{\bd\ppfw[2]\pi}{\dlambda_2} = \sum_{j}\tfrac{\mu^{k}_{2,j}}{\lambda^{k}_{2,j}}\1_{Q^{2,k}_{j}}\,,\\
                            & 0\leq\pi\ll\lambda, \tfrac{\dpi}{\dlambda}\in\calV_{k}\\
                        \infty,
                            &\text{else}
					\end{cases}
			\end{equation*}
			and
			\[
				F(\pi) =
					\begin{cases}
						\intO c\dpi + \gamma\intO \ext\Phi(\tfrac{\dpi}{\dlambda}) \dlambda,&
						0\leq\pi\ll\lambda,\, \ppfw[i]\pi = \mu_i\,,i=1,2\\
						\infty,&\text{else.}
					\end{cases}
			\]
			Given an arbitrary $\pi\in\M(\Omega)$, we now check the two conditions for $\Gamma$-convergence.
			
			\begin{enumerate}[i)]
				\item%
				\emph{$\liminf$-condition:}
				Let $(\pi_k)\subset\M(\Omega)$ such that $\pi_k \wsto \pi$.
				
				If $F(\pi) < \infty$, pass to a subsequence (denoted by the same symbol) with finite values $F_k(\pi_k)$. 
				Because $\pi_k \wsto \pi$, we have
				\[
					\intO c \dpi_k \to \intO c\dpi\,,
				\]
				and moreover, $\liminf_{k\to\infty}\intO \ext\Phi(\tfrac{\dpi_k}{\dlambda}) \dlambda \geq \intO \ext\Phi(\tfrac{\dpi}{\dlambda}) \dlambda$ by \cref{thm:ws_liminf}.
				
				If $F(\pi) = \infty$, assume for a contradiction that $\liminf_{k\to\infty} F_k(\pi_k) < \infty$. Pass to a subsequence (not renamed) $(\pi_{k})$ with $\lim_{k\to\infty}F_{k}(\pi_{k}) = \liminf_{k\to\infty} F_k(\pi_k)$. 
				
				Since, $\pi_k \wsto \pi$ and, by \cref{thm:pwconst_wsto}, $\sum_{i}\tfrac{\mu^{k}_{1,i}}{\lambda^{k}_{1,i}}\1_{Q^{1,k}_{i}} \wsto \tfrac{\dmu_1}{\dlambda_{1}}$, and $\sum_{j}\tfrac{\mu^{k}_{2,j}}{\lambda^{k}_{2,j}}\1_{Q^{2,k}_{j}} \wsto \tfrac{\dmu_2}{\dlambda_{2}}$, we see that $\tfrac{\dpi}{\dlambda}$ always satisfies the marginal constraints and positivity. Hence, $F(\pi)=\infty$ can not occur due to violation of these constraints.
				
				So, if $\pi$ satisfies the marginals but $\pi\not\ll\lambda$ or $\pi\ll\lambda$ with $\intO \ext\Phi(\tfrac{\dpi}{\dlambda}) \dlambda = \infty$, then it holds $\liminf_{k\to\infty}F_k(\pi_k) = \infty$ by \cref{thm:ws_liminf}, which again is a contradiction.
			
				Thus, $\liminf F_{k}(\pi_{k}) \geq F(\pi)$ in every case.
				
				\item% 
				\emph{$\limsup$-condition:}
				If $F(\pi)=\infty$, the condition $\limsup_{k\to\infty}F_{k}(\pi_{k})$ is trivially fulfilled for $\pi_k := \pi$.
                                Hence, consider $F(\pi) < \infty$ and define $(\pi_k)$ by
				\[
					\frac{\dpi_k}{\dlambda} := \sum_{i,j} \frac{\pi(I^k_{ij})}{\lambda(I^k_{ij})}\1_{I_{ij}^{k}}\,.
				\]
				Note that by assumption $\lambda(I^k_{ij})>0$ and therefore $\pi_k \geq 0$ by definition.
				One easily sees that 
				\[
					\intO[1] \frac{\dpi_k}{\dlambda} \dlambda_1 = \sum_{j}\frac{\mu^{k}_{2,j}}{\lambda^{k}_{2,j}}\1_{Q^{2,k}_{j}},\quad\text{and}\quad \intO[2] \frac{\dpi_k}{\dlambda} \dlambda_2 = \sum_{i}\frac{\mu^{k}_{1,i}}{\lambda^{k}_{1,i}}\1_{Q^{1,k}_{i}},
				\]
				i.e., $F_k(\pi_k) < \infty$.
				In particular, $\pi_k \wsto \pi$ by \cref{thm:pwconst_wsto}.
				
				It remains to show $\limsup_{k\to\infty}F_k(\pi_k) \leq F(\pi)$. As before, we have
				\[
					\intO c \dpi_k \to \intO c \dpi\,.
				\]
				Moreover, we get from Jensen's inequality that
                \[
                    \Phi\Big(\tfrac{\pi(I^{k}_{ij})}{\lambda(I^{k}_{ij})}\Big) = \Phi\Big(\tfrac1{\lambda(I^{k}_{ij})}\int_{I^{k}_{ij}}\tfrac{\dpi}{\dlambda}\dlambda\Big)\leq \tfrac1{\lambda(I^{k}_{ij})}\int_{I^{k}_{ij}}\Phi(\tfrac{\dpi}{\dlambda})\dlambda.
                \]
                With this we obtain
				\begin{align*}
                	\intO\Phi(\tfrac{\dpi_{k}}{\dlambda})\dlambda
                	& = \intO \Phi\of{\sum_{ij}\tfrac{\pi(I^{k}_{ij})}{\lambda(I^{k}_{ij})}\1_{I^{k}_{ij}}}\dlambda
                	= \sum_{i,j} \int_{I^k_{ij}}\dlambda\, \Phi\of{\tfrac{\pi(I^{k}_{ij})}{\lambda(I^{k}_{ij})}}
                	= \sum_{i,j} \lambda(I^{k}_{ij}) \Phi\of{\tfrac{\pi(I^{k}_{ij})}{\lambda(I^{k}_{ij})}}\\
                    & \leq \sum_{ij}\int_{I^{k}_{ij}}\Phi(\tfrac{\dpi}{\dlambda})\dlambda = \intO \Phi(\tfrac{\dpi}{\dlambda})\dlambda
				\end{align*}
                                which shows the $\limsup$-condition.\qedhere
			\end{enumerate}
		\end{proof}
		
		$\Gamma$-convergence for \emph{simultaneously} decreasing the regularization parameter \emph{and} refining the discretization proves to be a harder problem. The following example shows that the convergence rate of $\gamma_k$ must be linked to the convergence rate of the discretization.

		\begin{example}\label{expl:conv}
			Let $\Omega =[0,1]^2$, ${\mu_1} = {\mu_2} = \delta_0$ the Dirac measure at zero and $\lambda_i = \leb_i$ the Lebesgue measure on $\Omega_i$. Then clearly $\pi = \delta_0$ is the only feasible and thus the optimal transport plan. Let $0<h_{k}$ be a sequence, which is to be chosen.
                        
			We consider $\Phi(t) = t\log t$ and have the discretized optimal plan
			\[
                        \pi_k = \frac{\pi([0,h_k]^2)}{\lambda([0,h_k]^2)} \1_{{[0,h_k]}^2} = \frac{ \1_{{[0,h_k]}^2}}{h_k^2}\,.
			\]
			However, it holds that
			\[
                        \gamma_k \intO \Phi(\pi_k) \dlambda = \gamma_k \log(h_k^{-2}),
			\]
                        and hence, the $\limsup$ condition can not be fulfilled if $h_{k}$ is such that $\gamma_{k}\log(h_{k}^{-2})\not\to 0$ (which holds, e.g., for $h_{k} = \exp(-1/\gamma_{k})$).
                        
            Let now $\tilde\lambda_i = \delta_0$ be the Dirac measure at zero. The discretized optimal plan now is
            \[
            	 \tilde\pi_k = \frac{\pi([0,h_k]^2)}{\tilde\lambda([0,h_k]^2)} \1_{{[0,h_k]}^2} = \1_{{[0,h_k]}^2}\,, \qquad\text{with } \tilde\lambda := \tilde\lambda_1\otimes\tilde\lambda_2
            \]
            and hence,
            \[
            	\gamma_k \intO \Phi(\tilde\pi_k)\bd\tilde\lambda = \gamma_k \Phi(\tilde\pi_k(0)) = \gamma_k \Phi(1) = 0\,.
            \]
            Thus, $h_k$ may be chosen arbitrary in this case.
		\end{example}
		
		Using the insights of \cref{expl:conv} the desired result can be formulated.
		
		\begin{theorem}\label{thm:disc:conv}
			Under the assumptions of \cref{thm:disc:disc_conv}, let $(h_k)$ be a positive sequence  with
			$h_k \leq \lambda(I^k_{ij})$ for all $i,j$
			and let $(\gamma_k)$ be a sequence converging to zero such that
			\begin{equation}\label{cond:phiplus}
				\gamma_k \pos{\Phi}(h_k^{-1}) \to 0\,.
			\end{equation}
			Then the minimization problem
			\begin{align*}
				\inf \left\{\intO\right. c \dpi &+ \gamma_k \left.\intO \ext\Phi(\tfrac{\dpi}{\dlambda})\dlambda \,\middle|\right.\,
                \pi\ll\lambda\,,0\leq\tfrac{\dpi}{\dlambda}\in\calV^{k},\\
				&\left.\tfrac{\bd\ppfw[1]\pi}{\dlambda_1} = \sum_{i}\tfrac{\mu^{k}_{1,i}}{\lambda^{k}_{1,i}}\1_{Q^{1,k}_{i}},\, \tfrac{\bd\ppfw[2]\pi}{\dlambda_2} = \sum_{j}\tfrac{\mu^{k}_{2,j}}{\lambda^{k}_{2,j}}\1_{Q^{2,k}_{j}}\right\}
			\end{align*}%
			$\Gamma$-converges to
			\eqref{eq:ot}
			w.r.t. weak-$*$ convergence in $\M(\Omega)$ as $k\to\infty$.		
		\end{theorem}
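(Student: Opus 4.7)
\textbf{Proof plan for \cref{thm:disc:conv}.} I will establish the $\Gamma$-convergence by verifying the usual $\liminf$ and $\limsup$ conditions. Define, for $\pi\in\M(\Omega)$,
\[
    F(\pi):=\begin{cases}\int_{\Omega} c\dpi & \text{if }0\leq\pi\in\P(\Omega),\ \ppfw[i]\pi=\mu_i,\ i=1,2,\\ \infty & \text{else,}\end{cases}
\]
and let $F_k:\M(\Omega)\to\R\cup\{\infty\}$ be the functional corresponding to the discretized regularized problem stated in the theorem (with the value $\infty$ outside the feasible set). The $\Gamma$-limit of $F_k$ with respect to weak-$*$ convergence in $\M(\Omega)$ is then claimed to equal $F$, which reformulates~\eqref{eq:ot}.

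\textbf{$\liminf$-condition.} Let $\pi_k\wsto\pi$ and assume without loss of generality $\liminf_k F_k(\pi_k)<\infty$, since otherwise the inequality is trivial. By passing to a subsequence, we may assume $F_k(\pi_k)<\infty$ for all $k$, so each $\pi_k$ is feasible for the discretized problem. Since $\Phi$ is a quasi-Young's function it is bounded from below by some constant $-C$, so $\ext\Phi\geq -C$ on $[0,\infty)$, and therefore $\gamma_k\int_\Omega\ext\Phi(\tfrac{\dpi_k}{\dlambda})\dlambda\geq -\gamma_k C\lambda(\Omega)\to 0$. The cost term satisfies $\int c\dpi_k\to\int c\dpi$ by weak-$*$ convergence and continuity of $c$. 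Combining yields $\liminf_k F_k(\pi_k)\geq\int c\dpi$. Finally, the non-negativity of $\pi$ and the weak-$*$ continuity of the pushforward (together with \cref{thm:pwconst_wsto} applied to the discretized marginals, which converge weak-$*$ to $\mu_1$ and $\mu_2$ respectively) force $\pi\geq 0$ and $\ppfw[i]\pi=\mu_i$, so in fact $F(\pi)\leq\liminf_k F_k(\pi_k)$.

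\textbf{$\limsup$-condition.} For $F(\pi)=\infty$ the constant sequence works, so let $F(\pi)<\infty$, i.e.\ $\pi\in\P(\Omega)$ with $\ppfw[i]\pi=\mu_i$. Define the piecewise constant recovery sequence
\[
    \tfrac{\dpi_k}{\dlambda}:=\sum_{i,j}\tfrac{\pi(I^k_{ij})}{\lambda(I^k_{ij})}\1_{I^k_{ij}},
\]
which is non-negative, lies in $\calV^k$, and (as in the proof of \cref{thm:disc:disc_conv}) satisfies the discretized marginal constraints. Moreover, by \cref{thm:pwconst_wsto}, $\pi_k\wsto\pi$, and therefore $\int c\dpi_k\to\int c\dpi$ by continuity of $c$. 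The crucial observation is the pointwise bound: since $\pi\in\P(\Omega)$ we have $\pi(I^k_{ij})\leq 1$, while by hypothesis $\lambda(I^k_{ij})\geq h_k$, so
\[
    0\leq\tfrac{\dpi_k}{\dlambda}\leq \tfrac{1}{h_k}\quad\lambda\text{-a.e. on }\Omega.
\]
Using $\ext\Phi\leq\pos\Phi$ on $[0,\infty)$ together with the monotonicity of $\pos\Phi$ on $[0,\infty)$, this yields
\[
    \gamma_k\int_\Omega\ext\Phi(\tfrac{\dpi_k}{\dlambda})\dlambda\leq\gamma_k\,\lambda(\Omega)\,\pos\Phi(\tfrac{1}{h_k})\longrightarrow 0
\]
by the coupling hypothesis \eqref{cond:phiplus}. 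Combining both contributions gives $\limsup_k F_k(\pi_k)\leq\int c\dpi=F(\pi)$, completing the proof.

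\textbf{Main obstacle.} The only non-trivial ingredient is the construction of the recovery sequence and, in particular, the exploitation of the scaling hypothesis $h_k\leq\lambda(I^k_{ij})$ to obtain the uniform $L^\infty$-bound $\tfrac{\dpi_k}{\dlambda}\leq h_k^{-1}$; this is exactly what allows the regularization term to be controlled by $\gamma_k\pos\Phi(h_k^{-1})$ and explains the role of condition \eqref{cond:phiplus} illustrated by \cref{expl:conv}. Everything else reduces to weak-$*$ convergence arguments already available through \cref{thm:pwconst_wsto} and \cref{thm:ws_liminf}.
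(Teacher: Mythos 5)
Your proposal is correct and follows essentially the same route as the paper: both use the piecewise constant projection onto $\calV^k$ as the recovery sequence, both handle the $\liminf$ via the lower bound on $\Phi$ and weak-$*$ continuity of the cost and pushforward, and both control the regularizer in the $\limsup$ condition via the bound $\pi(I^k_{ij})/\lambda(I^k_{ij})\leq h_k^{-1}$, monotonicity of $\pos\Phi$, and the coupling hypothesis \eqref{cond:phiplus}. Your $\liminf$ step is in fact phrased slightly more cleanly (you bound $\gamma_k\int\ext\Phi$ from below directly by $-\gamma_kC\lambda(\Omega)\to 0$, rather than invoking a subsequence extraction), but the substance is identical.
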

		\begin{proof}
			Let $F_k$ be defined as in the proof of \cref{thm:disc:disc_conv} with $\gamma = \gamma_k$ and let $F:\M(\Omega) \to \R\cup\{\infty\}$ be defined via
			\[
				F(\pi) =
					\begin{cases}
						\intO c\dpi,&
						0\leq \pi\in\P(\Omega),\,\ppfw {\pi}  = \mu_i,\,i=1,2,\\
						\infty,&\text{else.}
					\end{cases}
			\]
			
			Given an arbitrary $\pi\in\M(\Omega)$, check now the two conditions for $\Gamma$-convergence.
			
			\begin{enumerate}[i)]
				\item%
				\emph{$\liminf$-condition:}
				Let $\pi$ be any measure and $(\pi_k)$ be such that $\pi_k \wsto \pi$.
				
				The case $F(\pi)= \infty$ can be treated similarly to the proof of \cref{thm:disc:disc_conv}, with the difference that we don't have to consider the case $\pi\not\ll\lambda$.
                For $F(\pi) < \infty$ we get
				\[
					\intO c \dpi_k \to \intO c \dpi\,.
				\]
                Moreover, since $\Phi$ is bounded from below, we can extract a subsequence such that $\intO \Phi(\tfrac{\dpi_{k}}{\dlambda})\dlambda$ is bounded from above and below and obtain
				\[
					\lim_{k\to\infty} \gamma_k \intO \Phi(\tfrac{\dpi_{k}}{\dlambda}) \dlambda = 0
				\]
				which proves $\liminf F_{k}(\pi_{k}) \geq F(\pi)$.
				
				\item%
				\emph{$\limsup$-condition:}
				For $F(\pi) = \infty$ we have nothing to prove.
				
				If $F(\pi) < \infty$, define $\pi_k$ as in the proof of \cref{thm:disc:disc_conv}.
				Then, the marginal constraints are satisfied and $\pi_k \wsto \pi$. Hence $\intO c\dpi_{k} \to \intO c\dpi$ and it remains to show that
				\[
					\limsup_{k\to\infty} \gamma_k \intO \Phi(\tfrac{\dpi_k}{\dlambda}) \dlambda \leq 0\,.
				\]
				Using $0\leq\pi(I^{k}_{ij}) \leq \pi(\Omega) = 1$ and monotonicity of $\Phi_{+}$ we get
				\begin{equation}\label{eq:disc:conv:est}
					\begin{split}
						\intO \ext\Phi(\tfrac{\dpi_k}{\dlambda}) \dlambda 
						& = \sum_{ij} \Phi\of{\tfrac{\pi(I^k_{ij})}{\lambda(I^k_{ij})}} \lambda(I^k_{ij})
						\leq \sum_{ij} \pos{\Phi}\of{\tfrac{\pi(I^k_{ij})}{\lambda(I^k_{ij})}} \lambda(I^k_{ij})\\
						&\leq \sum_{ij} \pos{\Phi}\Big(\tfrac{1}{\lambda(I^k_{ij})}\Big) \lambda(I^k_{ij})
						\leq \pos{\Phi}(h_k^{-1}) \sum_{ij}\lambda(I^{k}_{ij})\\
						&= \pos{\Phi}(h_k^{-1})\lambda(\Omega)\,.
					\end{split}
				\end{equation}
				Hence,
				\begin{align*}
				\gamma_k \intO \Phi(\tfrac{\dpi_k}{\dlambda}) \dlambda \leq\lambda(\Omega) \gamma_k \pos{\Phi}(h_k^{-1}) \to0
				\end{align*}
                                as desired.\qedhere
			\end{enumerate}
		\end{proof}
		
		\begin{corollary}
			\cref{thm:disc:disc_conv,thm:disc:conv} remain true if instead of a continuous cost function $c$, a sequence of step functions $(c_k)$, constant on the sets $I^k_{ij}$ and converging to $c$ uniformly, is used.
		\end{corollary}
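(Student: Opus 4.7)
The plan is to treat the replacement of $c$ by $c_k$ as a vanishingly small perturbation on the feasible set and reuse the $\Gamma$-convergence already established in \cref{thm:disc:disc_conv,thm:disc:conv}. Let $\tilde F_k$ denote the analogue of $F_k$ in which the integral $\intO c\dpi$ is replaced by $\intO c_k\dpi$; the feasible set is unchanged, since it is determined purely by the marginal constraints and the discretization space $\calV^k$. The key observation is that any feasible $\pi$ satisfies $\pi\in\P(\Omega)$, because its marginals are the probability measures $\sum_i \tfrac{\mu^k_{1,i}}{\lambda^k_{1,i}}\1_{Q^{1,k}_i}\lambda_1$ and $\sum_j \tfrac{\mu^k_{2,j}}{\lambda^k_{2,j}}\1_{Q^{2,k}_j}\lambda_2$, both of which have unit total mass. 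Consequently,
\[
  \bigl|\tilde F_k(\pi) - F_k(\pi)\bigr| = \Bigl|\intO (c_k-c)\dpi\Bigr| \leq \|c_k-c\|_{\infty}\cdot\pi(\Omega) = \|c_k-c\|_{\infty},
\]
and the right-hand side tends to $0$ uniformly in $\pi$ over the feasible set by hypothesis.

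For the $\liminf$-condition, let $\pi_k\wsto\pi$. Without loss of generality we may pass to a subsequence along which every $\pi_k$ is feasible for $\tilde F_k$ (otherwise $\tilde F_k(\pi_k)=\infty$ and the inequality is trivial). The uniform estimate above gives $\tilde F_k(\pi_k) = F_k(\pi_k) + o(1)$, and the desired bound $F(\pi)\leq\liminf_{k\to\infty}\tilde F_k(\pi_k)$ follows from the liminf parts of \cref{thm:disc:disc_conv} respectively \cref{thm:disc:conv}. For the $\limsup$-condition, take the recovery sequence $\pi_k$ constructed in the original proofs (namely $\tfrac{\dpi_k}{\dlambda}=\sum_{i,j}\tfrac{\pi(I^k_{ij})}{\lambda(I^k_{ij})}\1_{I^k_{ij}}$). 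This sequence is feasible for both $F_k$ and $\tilde F_k$, and the same perturbation estimate yields $\tilde F_k(\pi_k) = F_k(\pi_k) + o(1)$, so that $\limsup_{k\to\infty}\tilde F_k(\pi_k)\leq F(\pi)$ follows from the original $\limsup$-bounds.

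There is no genuine obstacle here: the only point requiring care is that the uniform bound above relies on every feasible plan being a probability measure, which is built into the marginal constraints. Since uniform convergence of $c_k$ to $c$ dominates the perturbation regardless of the choice of $\pi$, both halves of the $\Gamma$-convergence proofs carry over verbatim.
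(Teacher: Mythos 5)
Your argument is correct and follows essentially the same route as the paper: both treat the change of cost as a perturbation controlled by $\|c_k-c\|_\infty$ times a bound on the total variation of $\pi_k$, and invoke the $\Gamma$-convergence already established. The only (minor) difference is that the paper bounds $\|\pi_k\|$ via the general fact that weak-$*$ convergent sequences are norm-bounded, whereas you obtain the sharper bound $\pi_k(\Omega)=1$ directly from the feasibility constraints — a slightly more self-contained variant of the same estimate.
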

		\begin{proof}
			Rewrite the integral $\intO c_k \dpi_k$ as the dual pairing
			 $\sca{c_k}{\pi_k}$, where $(c_k) \subset \CCb(\Omega)$ converges strongly and $\pi_k \subset\M(\Omega)$ converges weakly-$*$. Because the variation norm $\norm{\pi_k}$ is bounded (\cite[Corollary 2.6.10]{megginson:1998}), $\sca{c_k}{\pi_k}\to\sca c\pi$ can be seen by standard arguments, which yields the assertion.
		\end{proof}
		
		Similarly to \cref{thm:conv_conv:monoton}, for $t^{-1} \Phi(t)$ monotone the assumption on $(\gamma_k)$ can be weakened.
		
		\begin{corollary}
			Let $\Phi$ be a quasi-Young's function such that $t^{-1} \Phi(t)$ is monotone. Then it suffices to assume
			\[
				\gamma_k {h_k}\pos{\Phi}\of{h_k^{-1}} \to 0
			\]
			instead of condition \eqref{cond:phiplus} in \cref{thm:disc:conv}.
		\end{corollary}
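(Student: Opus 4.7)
The only place the hypothesis $\gamma_k \pos\Phi(h_k^{-1}) \to 0$ enters the proof of \cref{thm:disc:conv} is in the chain of inequalities \eqref{eq:disc:conv:est} used to verify the $\limsup$-condition; the $\liminf$-argument is unaffected. My plan is therefore to keep the recovery sequence $\tfrac{\dpi_k}{\dlambda} = \sum_{ij}\tfrac{\pi(I_{ij}^k)}{\lambda(I_{ij}^k)}\1_{I_{ij}^k}$ from the proof of \cref{thm:disc:disc_conv} and to sharpen \eqref{eq:disc:conv:est} in the spirit of \cref{thm:conv_conv:monoton}, exploiting the monotonicity of $t\mapsto t^{-1}\Phi(t)$.

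The key observation is that since $\pi\in\P(\Omega)$ and $h_k \leq \lambda(I_{ij}^k)$, for every $i,j$ with $\pi(I_{ij}^k)>0$ one has
\[
	0 < \tfrac{\pi(I_{ij}^k)}{\lambda(I_{ij}^k)} \leq \tfrac{1}{\lambda(I_{ij}^k)} \leq h_k^{-1}.
\]
Monotonicity of $t^{-1}\Phi(t)$ transfers to $t^{-1}\pos\Phi(t)$ (for $t>0$ we have $t^{-1}\pos\Phi(t) = \pos{(t^{-1}\Phi(t))}$, and the positive part preserves monotonicity), so on the range $(0,h_k^{-1}]$
\[
	\tfrac{\pos\Phi(t)}{t} \leq h_k \pos\Phi(h_k^{-1}),\qquad\text{hence}\qquad \pos\Phi(t) \leq t\, h_k \pos\Phi(h_k^{-1}).
\]
Applying this with $t = \tfrac{\pi(I_{ij}^k)}{\lambda(I_{ij}^k)}$ and using $\pos\Phi(0)=0$ to handle empty cells gives
\begin{align*}
	\intO \ext\Phi\bigl(\tfrac{\dpi_k}{\dlambda}\bigr)\dlambda
	&\leq \sum_{ij} \pos\Phi\bigl(\tfrac{\pi(I_{ij}^k)}{\lambda(I_{ij}^k)}\bigr) \lambda(I_{ij}^k) \\
	&\leq h_k \pos\Phi(h_k^{-1}) \sum_{ij} \tfrac{\pi(I_{ij}^k)}{\lambda(I_{ij}^k)} \lambda(I_{ij}^k) = h_k \pos\Phi(h_k^{-1}),
\end{align*}
since $\sum_{ij}\pi(I_{ij}^k) = \pi(\Omega) = 1$. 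Multiplying by $\gamma_k$ and invoking the new hypothesis $\gamma_k h_k \pos\Phi(h_k^{-1}) \to 0$ produces the desired
\[
	\limsup_{k\to\infty} \gamma_k \intO \ext\Phi\bigl(\tfrac{\dpi_k}{\dlambda}\bigr)\dlambda \leq 0,
\]
which, combined with $\intO c\dpi_k \to \intO c\dpi$, completes the $\limsup$-condition.

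There is no real obstacle here; the only thing to be slightly careful about is that the monotonicity must be read on the truncated function $\pos\Phi$ rather than on $\Phi$ itself, but this transfer is immediate since multiplication by $t^{-1}>0$ commutes with taking the positive part. Everything else, including the verification of the marginal constraints for $\pi_k$, the weak-$*$ convergence $\pi_k \wsto \pi$, and the $\liminf$-condition (where $\Phi$ is bounded below and $\gamma_k\to 0$), is unchanged from the proof of \cref{thm:disc:conv} and need not be redone.
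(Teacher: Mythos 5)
Your proposal is correct and takes essentially the same approach as the paper's proof: the pointwise bound $\pos\Phi(t)\leq t\,h_k\pos\Phi(h_k^{-1})$ for $0<t\leq h_k^{-1}$ is exactly the paper's refinement of \eqref{eq:disc:conv:est}, organized slightly differently (you isolate the pointwise inequality first, the paper carries out the algebra termwise inside the sum). You also make explicit the transfer of monotonicity from $t^{-1}\Phi(t)$ to $t^{-1}\pos\Phi(t)$, a small point the paper uses tacitly.
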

		\begin{proof}
			Using the monotonicity of $t^{-1} \Phi(t)$, \eqref{eq:disc:conv:est} can be refined to
			\begin{align*}
				\intO \ext\Phi (\tfrac{\dpi_k}{\dlambda}) \dlambda &= \sum_{ij} \Phi\of{\tfrac{\pi(I^k_{ij})}{\lambda(I^k_{ij})}} \tfrac{\lambda(I^k_{ij})}{\pi(I^k_{ij})} \tfrac{\pi(I^k_{ij})}{\lambda(I^k_{ij})}  \lambda(I^k_{ij})
				\leq \sum_{ij} \Phi_+\of{\tfrac{\pi(I^k_{ij})}{\lambda(I^k_{ij})}} \tfrac{\lambda(I^k_{ij})}{\pi(I^k_{ij})} \pi(I^k_{ij})\\
				& \leq \sum_{ij} \Phi_+\of{\of{\lambda(I^k_{ij})}^{-1}} \lambda(I^k_{ij}) \pi(I^k_{ij})
				\leq  h_k  \Phi_+(h_k^{-1})\sum_{ij} \pi(I^k_{ij})\\
				&= h_k\Phi_+(h_k^{-1}) \pi(\Omega) =  h_k\Phi_+(h_k^{-1})
			\end{align*}
			and the assertion follows as in \cref{thm:disc:conv}.
		\end{proof}
	
	\section{Conclusion}
	
	Employing regularization in Orlicz spaces to the optimal transport problem allows to generalize the existence results of \cite{clason:2021,lorenz:2019} for both the primal and the predual problem and under mild assumptions, the results hold for regularization w.r.t. product measures $\lambda = \lambda_1 \otimes \lambda_2$. More precisely, primal solutions exist if and only if the marginals are functions in the appropriate Orlicz spaces and existence of optimizers in $L^q$ for the predual problem has been shown for the special case $\Phi(t) = \nicefrac{t^p}p$, $p\geq 2$.
	
	A combined regularization and smoothing approach leads to a family of well-posed approximations that $\Gamma$-converge to the original Kantorovich formulation if the regularization and smoothing parameters are coupled in an appropriate way. This gives a generalization of the corresponding result \cite[Theorem 5.1]{clason:2021} for $\Phi(t) = t\log t$.
	Similarly, a combined regularization and discretization approach leads to another family of approximations. It could be proven that, again, $\Gamma$-convergence is guaranteed if the regularization parameter and the discretization fineness are coupled in an appropriate way.
	
	Existence of solutions of the dual problem for general (quasi-)Young's functions has been considered in a different framework in \cite{Leonard:2008}. Still, future work might investigate, if the result can also be achieved by the approach considered here.
	Moreover, numerical methods for solving the regularized problem \eqref{eq:reg_kantorovich} have not been discussed here.

\section*{Acknowledgments}
%\begin{acknowledgements}
    The authors would like to thank two anonymous reviewers for their suggestions regarding the presentation and valuable comments, which also helped to close some gaps in the argumentation.
%\end{acknowledgements}

\section*{Appendix}
\appendix

\section{Proof of \cref{thm:lorl:eq_yf}}\label{appendix:thm:lorl:eq_yf}

\begin{proof}
    Let $\Phi(t) := \int_0^t \varphi(s) \ds$ be as in \cref{def:yf}. Then with 
    $\tilde{\varphi} := \varphi\cdot \1_{(t_0,\infty)}$
	it holds that $\tilde{\Phi}(t) = \int_0^t \tilde{\varphi}(s) \ds$ and hence, 	$\tilde{\Phi}$ is a Young's function.

	To show $\Lorl(\Omega, \dnu)=\Lorl[\tilde{\Phi}](\Omega, \dnu)$ it suffices to show that for any $f$
	\[
		\norm{f}_{\Lorl(\Omega, \dnu)} < \infty \Leftrightarrow \norm{f}_{\Lorl[\tilde{\Phi}](\Omega, \dnu)} < \infty\,.
	\]
	
	If $\norm{f}_{\Lorl(\Omega, \dnu)}<\infty$, then $\norm{f}_{\Lorl[\tilde{\Phi}](\Omega, \dnu)}<\infty$ trivially. Let therefore $\norm{f}_{\Lorl[\tilde{\Phi}](\Omega, \dnu)}~<~\infty$, i.e. there is a $\gamma$ such that
	\[
		\intO \tilde{\Phi}\left(\frac{\abs{f(x)}}{\gamma}\right) \dnu(x)= \int_{\Omega_\gamma(f)} \Phi\left(\frac{\abs{f(x)}}{\gamma}\right) \dnu(x) - \nu(\Omega_\gamma(f)) \Phi(t_0) \leq 1\,,
	\]
	where $\Omega_\gamma(f) := \set{x}{\frac{\abs{f(x)}}{\gamma}\geq t_0}$. Let $r\in(0,1]$ and write
    \[
		\intO \Phi\left(r\frac{\abs{f(x)}}{\gamma}\right) \dnu(x)= \left(
		\int_{\set{x}{\frac{\abs{f(x)}}{\gamma}\leq t_0}} +
		\int_{\set{x}{t_0 \leq \frac{\abs{f(x)}}{\gamma}\leq \frac{t_0}{r}}}+
		\int_{\set{x}{\frac{t_0}{r} \leq \frac{\abs{f(x)}}{\gamma}}} \right)
		\Phi\left(r\frac{\abs{f(x)}}{\gamma}\right) \dnu(x)\,.
    \]
	Since $\Phi(0) = 0$ and $\Phi$ is convex, for every $s\in[0,1]$ it holds that $\Phi(s x) \leq s\Phi(x)$ and we obtain an upper bound for the first integral by
	\[
		\int_{\set{x}{\frac{\abs{f(x)}}{\gamma}\leq t_0}}\Phi\left(r\frac{\abs{f(x)}}{\gamma}\right) \dnu(x) \leq
		r \int_{\set{x}{\frac{\abs{f(x)}}{\gamma}\leq t_0}}\Phi\left(\frac{\abs{f(x)}}{\gamma}\right) \dnu(x) \leq
		r C
	\]
	for some constant $C$. With the same argument,
	\begin{align*}
		\int_{\set{x}{\frac{t_0}{r} \leq \frac{\abs{f(x)}}{\gamma}}}\Phi\left(r\frac{\abs{f(x)}}{\gamma}\right) \dnu(x) 
        &\leq r \int_{\set{x}{\frac{t_0}{r} \leq \frac{\abs{f(x)}}{\gamma}}}\Phi\left(\frac{\abs{f(x)}}{\gamma}\right)\dnu(x)\\
        &\leq r \int_{\set{x}{t_0 \leq \frac{\abs{f(x)}}{\gamma}}}\Phi\left(\frac{\abs{f(x)}}{\gamma}\right)\dnu(x)\\
        &= r \left(\intO \tilde{\Phi}\left(\frac{\abs{f(x)}}{\gamma}\right) \dnu(x) + \nu(\Omega_\gamma(f))\Phi(t_0)\right)\\
        &\leq r \left(1 + \nu(\Omega_\gamma(f))\Phi(t_0)\right)\,.
	\end{align*}
	Finally, since $\set{x}{t_0 \leq \frac{\abs{f(x)}}{\gamma}\leq \frac{t_0}{r}} \subseteq \set{x}{t_0 \leq \frac{\abs{f(x)}}{\gamma}}$, the same estimate holds for the second integral. Combining the estimates for the three integrals one obtains
	\[
		\intO \Phi\left(r\frac{\abs{f(x)}}{\gamma}\right) \dnu(x) \leq r\left( C + 2\cdot(1 + \nu(\Omega_\gamma(f))\Phi(t_0))\right)\,,
	\]
	which is less or equal than $1$ for $r$ small enough.
\end{proof}

\section{Proof of \cref{thm:nuconv:ppfw}}\label{appendix:nuconv:ppfw}

\begin{proof}
	We only treat the case $i=1$ (the case $i=2$ being analogous).
    Recalling $G_{\delta} = \varphi_{\delta}\otimes\varphi_{\delta}$ and using Fubini's Theorem we get
	\begin{align*}
    \frac{\dmu_1^\delta}{\dleb}(y_1)
        &= \frac{\bd\of{\varphi_\delta * \ppfw[1]{\pi}}}{\dleb}(y_1)
		= \int_{\R^n} \varphi_\delta(y_1-x_1)\bd(\ppfw[1]{\pi})(x_1)\\
		&= \int_{\R^n\times\R^n} \varphi_\delta(y_1-x_1) \dpi(x_1,x_2)
		= \int_{\R^n} \int_{\R^n\times\R^n} G_\delta(y_1-x_1, y_2-x_2) \dpi(x_1, x_2) \bd y_2\\
		&= \int_{\R^n} \of{G_\delta * \pi}(y_1, y_2) \bd y_2
		= \frac{\bd\ppfw[1]\pi_\delta}{\dleb} (y_1)\,,
	\end{align*}
    where the third equality follows directly from the definition of the Lebesgue integral via simple functions.
  \end{proof}

\section{Proof of \cref{thm:pwconst_wsto}}\label{appendix:disc}

\begin{proof}
	Let $A\subset\Omega$ be a Borel set, $\varepsilon>0$ and $A^k_-$, $A^k_+$ as in \cref{asspt:disc}. Then for all $i,j$ it holds
	\[
		\lambda(I^k_{i,j}\cap A^k_+) =
			\begin{cases}
				\lambda(I^k_{ij}),&I^k_{ij}\cap A \neq \emptyset,\\
				0,&\text{else}
			\end{cases}
	\]
	and similarly for $\lambda(I^k_{i,j}\cap A^k_-)$. In combination with \eqref{eq:asspt:disc} this yields
	\begin{align*}
		\int_A \nu_k \dlambda - \nu(A)
		&\leq \int_{A^k_+} \nu_k\dlambda - \nu(A^k_-)
		= \sum_{i,j} \frac{\nu(I^k_{ij})}{\lambda(I^k_{ij})}\lambda(I^k_{i,j}\cap A^k_+) - \nu(A^k_-)\\
		&= \sum_{\set{(i,j)}{I^k_{i,j}\cap A \neq \emptyset}} \nu(I^k_{i,j}) - \nu(A^k_-)
		= \nu(A^k_+) - \nu(A^k_-)
		< \varepsilon\,,
	\end{align*}
	for $k$ large enough.
	Using an analogous argument for a lower bound, we get
	\[
		\left|\int_A \nu_k \dlambda - \nu(A)\right| < \varepsilon\,,
	\]
	which yields the first assertion. Analogous argumentation proves the result for $\nu^i\in\Mp(\Omega_i)$.
\end{proof}

	\bibliographystyle{abbrvurl}
	\bibliography{./literature}

\end{document}